\documentclass[hidelinks,12pt,a4paper]{article}
\usepackage[utf8x]{inputenc}
\usepackage[top=30mm, bottom=30mm, inner=20mm, outer=20mm, headsep=10mm, footskip=12mm]{geometry}
\usepackage{amssymb,csquotes}
\usepackage{amsmath,bm}
\usepackage{mathtools}
\usepackage{amsthm}
\usepackage[english]{babel}
\usepackage{lmodern}
\usepackage[mathscr]{euscript}
\usepackage[colorlinks=true]{hyperref}
\usepackage{todonotes}
\usepackage{appendix}
\usepackage{cleveref}
\usepackage{bm}\usepackage{cancel}
\usepackage{upgreek,orcidlink,dirtytalk}
\usepackage[normalem]{ulem}
\usepackage{float}\usepackage{tikz,tikz-cd}

\usepackage{todonotes}

\usepackage[
    style=numeric,
    giveninits=true,
    isbn=false,
    doi=false,
    eprint=true,
    url=false,
    backend=biber,
    maxbibnames=99
]{biblatex}
\usetikzlibrary{patterns}
\numberwithin{equation}{section}

\theoremstyle{plain}
\newtheorem{definition}{Definition}[section]

\newtheorem{theorem}[definition]{Theorem}
\newtheorem{proposition}[definition]{Proposition}
\newtheorem{lemma}[definition]{Lemma}
\newtheorem{corollary}[definition]{Corollary}

\theoremstyle{definition}
\newtheorem{remark}[definition]{Remark}

\makeatletter
\let\save@mathaccent\mathaccent
\newcommand*\if@single[3]{%
  \setbox0\hbox{${\mathaccent"0362{#1}}^H$}%
  \setbox2\hbox{${\mathaccent"0362{\kern0pt#1}}^H$}%
  \ifdim\ht0=\ht2 #3\else #2\fi
  }
\newcommand*\rel@kern[1]{\kern#1\dimexpr\macc@kerna}
\newcommand*\widebar[1]{\@ifnextchar^{{\wide@bar{#1}{0}}}{\wide@bar{#1}{1}}}
\newcommand*\wide@bar[2]{\if@single{#1}{\wide@bar@{#1}{#2}{1}}{\wide@bar@{#1}{#2}{2}}}
\newcommand*\wide@bar@[3]{%
  \begingroup
  \def\mathaccent##1##2{%
    \let\mathaccent\save@mathaccent
    \if#32 \let\macc@nucleus\first@char \fi
    \setbox\z@\hbox{$\macc@style{\macc@nucleus}_{}$}%
    \setbox\tw@\hbox{$\macc@style{\macc@nucleus}{}_{}$}%
    \dimen@\wd\tw@
    \advance\dimen@-\wd\z@
    \divide\dimen@ 3
    \@tempdima\wd\tw@
    \advance\@tempdima-\scriptspace
    \divide\@tempdima 10
    \advance\dimen@-\@tempdima
    \ifdim\dimen@>\z@ \dimen@0pt\fi
    \rel@kern{0.6}\kern-\dimen@
    \if#31
      \overline{\rel@kern{-0.6}\kern\dimen@\macc@nucleus\rel@kern{0.4}\kern\dimen@}%
      \advance\dimen@0.4\dimexpr\macc@kerna
      \let\final@kern#2%
      \ifdim\dimen@<\z@ \let\final@kern1\fi
      \if\final@kern1 \kern-\dimen@\fi
    \else
      \overline{\rel@kern{-0.6}\kern\dimen@#1}%
    \fi
  }%
  \macc@depth\@ne
  \let\math@bgroup\@empty \let\math@egroup\macc@set@skewchar
  \mathsurround\z@ \frozen@everymath{\mathgroup\macc@group\relax}%
  \macc@set@skewchar\relax
  \let\mathaccentV\macc@nested@a
  \if#31
    \macc@nested@a\relax111{#1}%
  \else
    \def\gobble@till@marker##1\endmarker{}%
    \futurelet\first@char\gobble@till@marker#1\endmarker
    \ifcat\noexpand\first@char A\else
      \def\first@char{}%
    \fi
    \macc@nested@a\relax111{\first@char}%
  \fi
  \endgroup
}
\makeatother

\newcommand{\R}{\mathbb {R}}

\renewcommand{\d}{\,\mathrm{d}}

\newcommand{\eps}{\varepsilon}

\usepackage[xcolor]{changebar}
\cbcolor{blue}

\definecolor{addgreen}{rgb}{0,0.45,0}

\definecolor{todoorange}{rgb}{0.85,0.42,0}
\definecolor{addpurple}{rgb}{0.45,0,0.6}
\definecolor{todobrown}{rgb}{0.40,0.22,0.05}

\usepackage[inline]{enumitem}
\newcommand{\enumlabelformat}{\roman}

\newlength{\thelabelsep}
\newcounter{inlineenum}
\renewcommand{\theinlineenum}{\enumlabelformat{inlineenum}}

\newcommand{\blue[1]}{{#1}}

\let\epsilon\varepsilon
\let\phi\varphi

\newcommand{\krus}{{\mathcal {M}_\mathrm{Krus}}}
\newcommand{\N}{\mathbb{N}}

\newcommand{\nchi}{{\raise.3ex\hbox{$\chi$}}}

\usepackage[runin]{abstract}

\makeatletter
\let\@fnsymbol\@arabic
\makeatother

\allowdisplaybreaks

\usepackage{calc}
\usepackage{scalerel, stackengine}
\stackMath

\newcommand\reallywidehat[1]{%
  \savestack{\tmpbox}{\stretchto{%
    \scaleto{%
      \scalerel*[\widthof{\ensuremath{#1}}]{\kern-.6pt\bigwedge\kern-.6pt}%
      {\rule[-\textheight/2]{1ex}{\textheight}}%
    }{\textheight}%
  }{0.5ex}}%
  \stackon[1pt]{#1}{\tmpbox}%
}

\title{Ideal points, directed completion and the case of the maximally extended Schwarzschild spacetime}

\author{Nicola Gigli\footnotemark[1] \,\orcidlink{0000-0002-5088-2211},
Argam Ohanyan\footnotemark[2] \,\orcidlink{0000-0002-6585-8361},
Marco Picerni\footnotemark[1]\,
\orcidlink{0009-0004-4364-4831}, \\
Zhe-Feng Xu\footnotemark[1]\,\,\textsuperscript{,}\footnotemark[3] \,\orcidlink{0009-0004-7020-8517},
Matteo Zanardini\footnotemark[1] \,\orcidlink{0009-0007-5061-9361}
}
\date{\today}

\begin{document}

\maketitle
\footnotetext[1]{SISSA, 34136, Trieste, Italy, ngigli@sissa.it, mpicerni@sissa.it, zxu@sissa.it, mzanardi@sissa.it} 
\footnotetext[2]{Department of Mathematics, 
  University of Toronto, 
  40 St. George Street, 
  Toronto, Ontario, 
  M5S 2E4, Canada, argam.ohanyan@utoronto.ca} 
\footnotetext[3]{School of Mathematical Sciences, University of Science and Technology of China,  230026, Hefei, China, xzf1998@mail.ustc.edu.cn}

\begin{abstract}
We study the directed completion of Lorentzian pre-length spaces, and we show that, under some natural assumptions which cover the case of smooth globally hyperbolic spacetimes, it coincides with the future causal completion of Geroch--Kronheimer--Penrose. Moreover, we provide applications of our findings by characterizing the directed completion of the Kruskal--Szekeres spacetime in terms of its radial null geodesics.

\vspace{1em}

\noindent
\emph{Keywords:} Partial order, directed completion, causal completion, Schwarzschild spacetime
\vspace{0.2em}

\noindent
\emph{MSC2020:} 53C50, 06A06
\end{abstract}

\tableofcontents

\section{Introduction}

The notions of boundary and completion of a smooth spacetime are fundamental concepts in Lorentzian Geometry and General Relativity. Various definitions have appeared over the years, starting with Penrose's conformal boundary, introduced by Penrose in \cite{PenroseConfBoundary}, that, among other things, permits to give a mathematical definition of a black hole. Although this construction is geometrically significant, it is not canonical, and it is well-defined only in specific examples.

In 1972, Geroch, Kronheimer, and Penrose \cite{GeKroPe} introduced the definition of causal boundary and causal completion. This more general construction is based on terminal indecomposable past and future sets (TIPs and TIFs) and, loosely speaking, consists of adding some points at infinity to the spacetime which consist of limit points of certain causal curves.
For instance, Budic and Sachs \cite{BudicSachs1974} showed that the causal completion of any causally continuous spacetime has Hausdorff topology. Furthermore, the causal completion was shown to have a natural structure by Harris \cite{Harris_universality}, who proved its universal properties within the category of chronological spaces. Other approaches include the abstract boundary by Scott and Szekeres in \cite{Scott_Szekeres_a_boundary}, the geodesic boundary by Geroch in \cite{Geroch-completion} and Schmidt's bundle boundary \cite{Schmidt1971}.

Motivated by interests in non-smooth Lorentzian geometry, in 2025, the first author introduced in \cite{gigli2025hyperbolicbanachspaces} the notion of directed completion for a general partially ordered set, a setting that includes smooth globally hyperbolic spacetimes (see also \cite{markowsky1976chain, zhao2010dcpo} for the same definition arising from different motivations). This notion is based on the concept of directed sets and it is defined by a universal property in the category of partial orders. Heuristically, the directed completion adds to the original space the suprema of directed sets which do not already possess one. This is a fundamental difference from the causal completion, which adds both past and future limit points.

In this work we show that the future causal completion and the directed completion coincide for smooth globally hyperbolic spacetimes (cf.\ Proposition \ref{prop: case of glob hyp spacetimes}), and more generally, for approximable, past-distinguishing Lorentzian pre-length spaces satisfying the supremum-compatibility condition \eqref{eq:SC}, see Theorem \ref{theorem: equivalence of causal and directed completions}. As the notion of directed completion is more robust than the future causal one (as it only necessitates a partial order), this result corroborates the idea that the former is the appropriate generalization of the latter to non-smooth settings. 
We also point out that this coincidence result is closely related to the universality properties of the future causal completion studied by Harris \cite{Harris_universality} in the setting of chronological sets.

In the last section, we give concrete applications by studying the maximally extended Schwarzschild spacetime. Indeed, we explicitly characterize its directed completion, or equivalently, its future causal completion, by exploiting a case-by-case study of its directed sets using radial null geodesics. This result offers an alternative viewpoint on the construction of the causal completion of the maximally extended Schwarzschild spacetime obtained by Geroch--Kronheimer--Penrose in \cite{GeKroPe}. 

\section{Preliminaries}

\subsection{The directed completion and Lorentzian pre-length spaces}

In this subsection we recall the basics of Lorentzian pre-length spaces and the construction of the directed completion of a partially ordered set. We suggest \cite{gigli2025hyperbolicbanachspaces, KunzingerSamann} for more material on the topic. Let us start with some basic definitions.

\begin{definition}[Directed and lower sets]
    Given a partially ordered set $(\mathrm{X},\leq)$, we say that $D\subseteq \mathrm{X}$ is directed if for any finite $F\subseteq D$ there exists $z\in D$ such that $x\leq z$ for every $x\in F$.
    A lower set $L\subseteq \mathrm{X}$ is a subset such that $x\in L$ and $y\in \mathrm{X}$ with $y\leq x$ implies $y\in L$.
\end{definition}

\begin{definition}[Directed completeness]\label{def:Directedcomplete}
A partially ordered set $(X,\leq)$ is \emph{directed-complete} if every
directed subset $D\subseteq X$ admits a supremum $\sup D$; that is, $ d\leq \sup D$ for every $ d\in D,$
and $\sup D\leq z $ for every upper bound $ z$  of $ D.$
\end{definition}
\begin{definition}
    Given a partially ordered set $(\mathrm X,\leq)$, a subset $A\subseteq \mathrm{X}$ is called directed-sup-closed if for every directed subset $D\subseteq A$ that admits a supremum, one has $\sup D\in A$. Given a subset $A\subseteq \mathrm X$, we denote by $\widehat{A}$ the smallest (with respect to inclusion) lower and directed-sup-closed subset of $\mathrm X$ containing $A$. Finally, a subset $C\subseteq D$ of a directed set $D$ is called \emph{cofinal} if for every $d\in D$ there exists $c\in C$ with $d\leq c$; in this case, $C$ is directed as well, and $C$ and $D$ have the same upper bounds, hence the same supremum whenever one of the two exists.
\end{definition} 

To define the directed completion, we need a notion of natural morphism between directed-complete partial orders, the choice is that of maps with the monotone convergence property.

\begin{definition}[Monotone convergence property, {\cite[Definition~2.2]{gigli2025hyperbolicbanachspaces}}]\label{def:MCP} We say that a map  $T:(\mathrm{X_1},\leq_1)\to (\mathrm{X_2},\leq_2)$
    has the \emph{monotone convergence property}, {\sf Mcp} for short, if for any directed set $D\subseteq \mathrm{X_1}$ admitting a supremum, the set $T(D)\subseteq \mathrm{X_2}$ admits supremum and
    \begin{equation*}\label{eq:MCP}
        \sup T(D)=T(\sup D).
    \end{equation*}
\end{definition}

We can now define the directed completion of a partially ordered set by its universal property following \cite[Definition 2.4]{gigli2025hyperbolicbanachspaces}, see also \cite{markowsky1976chain,zhao2010dcpo}.

\begin{definition}[Directed completion] Let $(\mathrm{X}, \leq)$ be a partially ordered set. A directed completion of $(\mathrm{X}, \leq)$ is given by a directed complete partially ordered set $(\overline{\mathrm{X}}, \overline{\leq})$ and a map $\iota: \mathrm{X} \rightarrow \overline{\mathrm{X}}$ with the {\sf Mcp} that is universal in the following sense: for any directed complete partially ordered set $\left(\mathrm{Z}, \leq_{\mathrm{Z}}\right)$ and map $T: \mathrm{X} \rightarrow \mathrm{Z}$ with the {\sf Mcp} there is a unique map $\bar{T}: \overline{\mathrm{X}} \rightarrow \mathrm{Z}$ with the {\sf Mcp} so that $\bar{T} \circ \iota=T$, i.e., making the following diagram commute
\begin{equation*}
\begin{tikzcd}[column sep=large, row sep=large]
X \arrow[r, "\iota"] \arrow[dr, "T"'] & \bar{X} \arrow[d, "\bar{T}"] \\
& Z
\end{tikzcd}
\end{equation*}
\end{definition}

Finally, let us recall that in \cite[Example 14]{gigli2025hyperbolicbanachspaces} the first author showed that the directed completion of the Minkowski spacetime coincides with the future causal completion of Geroch--Kronheimer--Penrose \cite{GeKroPe}.

The geometric setting in which we will study the directed completion is that of Lorentzian pre-length spaces, introduced by Kunzinger and Sämann in \cite{KunzingerSamann}. Now, we recall the relevant basic definitions and results.

\begin{definition}[Lorentzian pre-length space] \label{def: LPLS}
A quintuple $(\mathrm X,\mathsf d,\ll,\leq,\ell)$ is called a Lorentzian pre-length space if $(\mathrm X,\mathsf d)$ is a separable metric space, $\ll$ is a transitive relation, $\leq$ is a reflexive and transitive relation containing $\ll$ and $\ell:X\times X \to [0,\infty]$ is a lower semicontinuous function such that $\ell(x,y) = 0$ if $x \not \leq y$, $\ell(x,y) > 0$ if and only if $x \ll y$, and for all $x \leq y \leq z$,
\begin{equation}
    \ell(x,z) \geq \ell(x,y) + \ell(y,z).
\end{equation}
\end{definition}

Lorentzian pre-length spaces provide a nonsmooth analogue of time-oriented Lorentzian manifolds: the relations $\leq$ and $\ll$ represent the causal and chronological orders, respectively, while $\ell$ plays the role of the Lorentzian distance.

An alternative framework is given by metric spacetimes \cite{Octet}, where $\ell$ may take the value $-\infty$ and completely determines both $\leq$ and $\ll$. The two approaches are compatible, and the results of Section~\ref{sec:GKP_vs_directed} can be adapted straightforwardly to metric spacetimes. Nevertheless, throughout this work we adopt Definition~\ref{def: LPLS}.

In a Lorentzian pre-length space $\mathrm X$, given a point $x \in \mathrm X$ we define its chronological past by the set $I^-(x)$ and its chronological future by the set $I^+(x)$, where
\[
I^-(x) : = \big\{ y \in \mathrm X : y \ll x \big\} \, , \qquad I^+(x) : = \big\{ y \in \mathrm X : x \ll y \big\} \, .
\]
Analogously, we define its causal past by $J^-(x)$ and its causal future by $J^+(x)$ in terms of the causal relation $\leq$. Moreover, for a subset $A \subseteq X$, we define
\begin{equation*}
    I^\pm(A):=\bigcup_{x \in A} I^\pm(x), \quad J^\pm(A):=\bigcup_{x \in A} J^\pm(x).
\end{equation*}

A $\mathsf d$-locally Lipschitz curve $\gamma: I \to \mathrm X$, where $I \subseteq \mathbb R$ is an interval, is called (future) timelike if $\gamma_s \ll \gamma_t$ holds for all $s < t$, and it is called (future) causal if $\gamma_s \leq \gamma_t$ holds for all $s \leq t$. 

We say that $\mathrm X$ is causally path-connected if every pair of points $x \leq y$ can be connected by a future causal curve and each $x \ll y$ can be connected by a future timelike curve. A causal curve $\gamma:[a,b) \to X$ is called extendible to $[a,b]$ if $\lim_{t \uparrow b}\gamma_t$ exists, otherwise it is called inextendible.

It is easy to infer from the lower semicontinuity of $\ell$ that, if $\mathrm X$ is a Lorentzian pre-length space, then the sets $I^\pm(x) \subset \mathrm{X}$ are open for all $x \in X$, hence so is $I^\pm(A)$ for all $A \subseteq X$. Moreover, if $x \ll y \leq z$ or $x \leq y \ll z$, we have that $x \ll z$. This last property is known as the push-up property, see \cite[Lemma 2.10]{KunzingerSamann}.

In general, Lorentzian pre-length spaces are not partially ordered sets, indeed, $\leq$ need not be antisymmetric.

\begin{definition}[Causality and future/past distinguishing]
A Lorentzian pre-length space is called causal if $\leq$ is antisymmetric. It is called past (resp. future) distinguishing if $I^-(x) = I^-(y)$ (resp. $I^+(x) = I^+(y)$) implies $x = y$.
\end{definition}

It is clear that if $\mathrm X$ is a causal Lorentzian pre-length space, then $(\mathrm X, \leq)$ is a partially ordered set. By the push-up property, past distinction implies that $\leq$ is antisymmetric, indeed any past (or future) distinguishing Lorentzian pre-length space is causal. We conclude this subsection with three auxiliary definitions that will be used in the following section.

\begin{definition}[Approximability]
A Lorentzian pre-length space $\mathrm X$ is called approximable if for each $x \in \mathrm X$, $x \in \overline{I^+(x)} \cap \overline{I^-(x)}$.
\end{definition}

If $\mathrm{X}$ is approximable, then $I^\pm(A)\neq\emptyset$ for every nonempty $A\subseteq\mathrm{X}$. Moreover, $I^\pm(x)\cap U\neq\emptyset$ for every open neighborhood $U$ of $x\in\mathrm{X}$. We also remark that every smooth spacetime is approximable and, in approximable Lorentzian pre-length spaces, $I^-(I^-(A)) = I^-(A)$ holds for all $A \subseteq X$, similarly for $I^+$.

We shall also need the following two properties, which relate the causal structure of $\mathrm X$ to the topology of the underlying metric space. 

\begin{definition}[Closed causal futures and pasts]\label{def: closed causal futures}
A Lorentzian pre-length space $\mathrm X$ is said to have \emph{closed causal futures} if the set $J^+(q)$ is closed in $(\mathrm X, \mathsf d)$ for every $q \in \mathrm X$. Analogously, it is said to have \emph{closed causal pasts} if $J^-(q)$ is closed in $(\mathrm X, \mathsf d)$ for every $q \in X$.
\end{definition}

\begin{definition}[Forward completeness]\label{def: forward complete}
A Lorentzian pre-length space $\mathrm X$ is called \emph{forward complete} if every sequence $(x_n)_{n \in \N} \subseteq \mathrm X$ such that $x_n \ll x_{n+1}$ for every $n \in \N$ and such that $x_n \leq \bar x$ for some $\bar x \in \mathrm X$ and every $n \in \N$ converges in $(\mathrm X, \mathsf d)$.
\end{definition}

\begin{remark}
Forward completeness is a Dedekind-type order completeness condition: every chronologically 
increasing sequence that is bounded above admits a limit. Here, boundedness refers to the order rather than to $\mathsf d$, which is a stronger condition. This a weaker version of the notion of \emph{forward metric spacetime} appearing in \cite[Definition 2.1(iii)]{Octet}. Note that global hyperbolicity (in particular, the compactness of diamonds) implies forward completeness, cf. Proposition \ref{prop: case of glob hyp spacetimes}.
\end{remark}


\subsection{The Schwarzschild spacetime}\label{subsection: schwarzschild}

The Schwarzschild metric was first introduced in \cite{Schwarzschild1916_MassPoint} as a solution to the Einstein vacuum equation
\[
\operatorname{Ric}=0 \, ,
\]
which represents the vacuum region outside a static and spherically symmetric star of mass $M>0$. This metric, with the signature convention $(+, -,-,-)$ and coordinates $(t,r, \theta, \phi)$, is given by 
\begin{equation} \label{eq:Schwarzschild_metric}
    g_{{M}} = \left ( 1-\frac{2M}{r} \right ) \d t^2  - \left ( 1- \frac{2M}{r} \right )^{-1} \d r^2 - r^2 \d \Omega^2 \, ,
\end{equation}
where $\d \Omega^2:=\d \theta ^2 + [ \sin \theta ]^2 \d \phi ^2 $ is the standard round metric on the sphere $\mathbb{S}^2$. That $r = 2M$ is only a coordinate singularity can be seen by considering the Eddington--Finkelstein coordinates \cite{Eddington,Finkelstein}.

The causal structure of the Schwarzschild spacetime is better understood when one considers its maximal analytic extension.
This was achieved independently by Kruskal \cite{Kruskal-paper} and Szekeres \cite{Szekeres1960} through the introduction of Kruskal--Szekeres coordinates $U,V$:
\begin{equation}\label{kruskal UV}
    U:=
\begin{cases}
-e^{-\frac u{4M}}, & r>2M,\\
e^{-\frac u{4M}}, & 0<r<2M,
\end{cases}
\qquad
V:=e^{\frac v{4M}},
\end{equation}
where
\[
u:=t-r-2M\log\left|\frac{r}{2M}-1\right|,\qquad
v:=t+r+2M\log\left|\frac{r}{2M}-1\right|,
\]
which are defined on the so-called maximally extended Schwarzschild spacetime
\[
\krus := \big\{(U,V)\in \mathbb{R}^2 : UV<1\big\}\times \mathbb{S}^2.
\]
This set of $(U,V)$ represents a choice of coordinates different from the $(t,r)$ used in \eqref{eq:Schwarzschild_metric}, and the two are related via
\begin{equation}\label{relation uv}
-UV=\left(\frac{r}{2M}-1\right)e^{\frac r{2M}},
\end{equation}
In the $(U,V)$-coordinates, the metric extends smoothly across \(r=2M\), which is now characterized by $UV=0$, and takes the form
\begin{equation}\label{Kruskal}
g_M=\frac{32M^3}{r}e^{-\frac{r}{2M}}\d U\d V-r^2\d \Omega^2,    
\end{equation}
and the vector field $\partial_U+\partial_V$ defines the time orientation (cf. \cite{sbierski}).
In particular, note that the map
\begin{equation}\label{eq:symmetric_map_UV}
    \mathcal{T}:(U,V,\Omega)\to (V,U,\Omega)
\end{equation}
is a time-orientation-preserving isometry of $\krus$, and that the coordinates $U$ and $V$ are monotonically increasing along every future directed causal curve. Moreover, $\krus$ is a globally hyperbolic spacetime \cite{GerochDomDep}. We can partition $\krus$ into five regions:
\[
\begin{array}{ll}
\text{i)  right exterior region } \mathrm{I}:=\{U<0,\ V>0\}, &
\text{ii)  black hole  } \mathrm{II}:=\{U>0,\ V>0\}, \\[0.3em]
\text{iii)
white hole } \mathrm{III}:=\{U<0,\ V<0\}, &
\text{iv)  left exterior region } \mathrm{IV}:=\{U>0,\ V<0\}, \\[0.3em]
\multicolumn{2}{l}{\text{v) horizons } \mathcal{H}:= \{r=2M\}=\{UV=0\}.}
\end{array}
\]
The Penrose diagram \cite{carter1966complete} of $\krus$ is given by
\begin{figure}[H]
    \centering
    \includegraphics[scale=0.42]{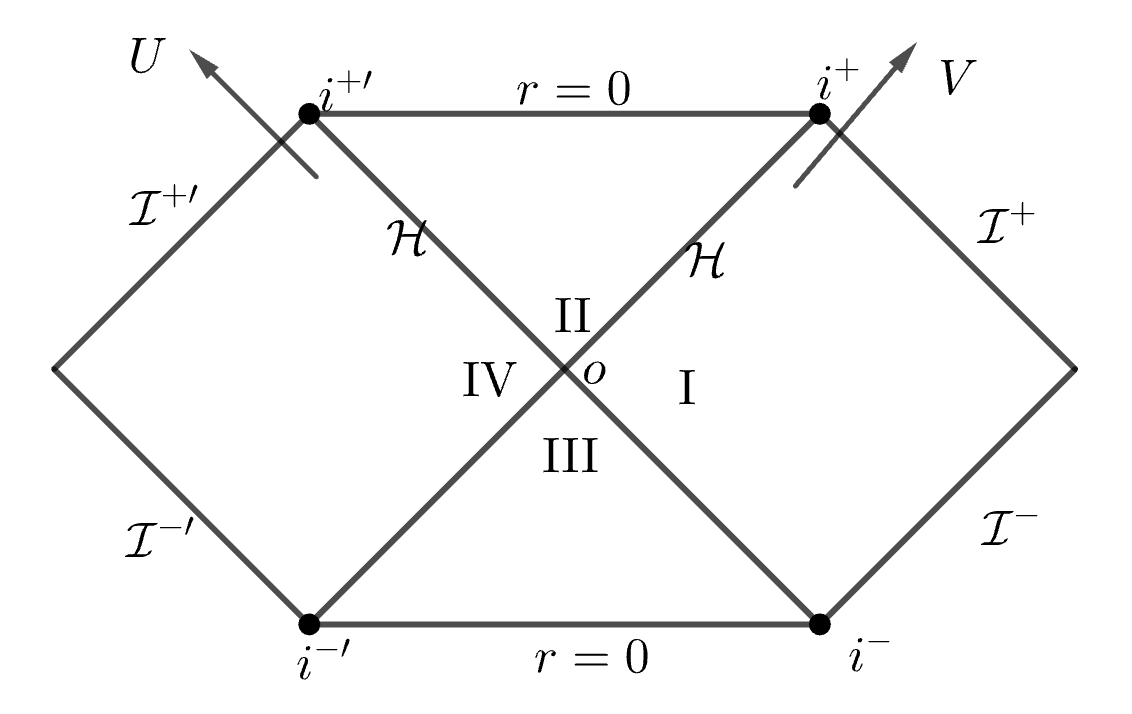}
    \caption{The Penrose diagram of $(\krus, g_{\krus})$.}
    \label{fig:penrose}
\end{figure}

In Figure \ref{fig:penrose}, we have denoted by $\mathcal{I}^+,\mathcal{I}^{+'}$ the future null infinity, $\mathcal{I}^-,\mathcal{I}^{-'}$ the past null infinity, $i^+, i^{+'}$ the future timelike infinity, and $i^-, i^{-'}$ the past timelike infinity while \(r=0\) corresponds to the Schwarzschild singularity. 
Moreover, Figure \ref{fig:penrose} was used to geometrically describe the causal completion of $\krus$ in \cite[Figure 3]{GeKroPe}.

We conclude this section by recalling Sbierski's result \cite{sbierski} that $\krus$ is $C^0$-inextendible. While its $C^2$-inextendibility follows from the blow-up of the Kretschmann scalar, this establishes inextendibility even at lower regularity.

\section{The compatibility of the notions of completion} \label{sec:GKP_vs_directed}

\subsection{The future causal completion}

In this subsection, we adapt the construction of the future causal completion, introduced by Geroch, Kronheimer, and Penrose in \cite{GeKroPe}, to the setting of Lorentzian pre-length spaces. We refer the reader to the recent works \cite{AkeBurSol, BurFloHer} for more information about the topic.

Let $\mathrm X$ be a Lorentzian pre-length space. A subset $U \subseteq X$ is called past if $I^-(U) = U$. Recall that past sets are necessarily open. 

\begin{lemma}
\label{lemma: one inclusion for past sets}
Let $\mathrm X$ be an approximable Lorentzian pre-length space. An open subset $U \subseteq \mathrm X$ is past if and only if $I^-(U) \subseteq U$.
\end{lemma}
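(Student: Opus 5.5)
The forward implication is immediate: if $U$ is past, then $I^-(U)=U$, and in particular $I^-(U)\subseteq U$. So the plan is to prove the converse. Assume $U$ is open and $I^-(U)\subseteq U$; it then suffices to show $U\subseteq I^-(U)$.

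Fix $x\in U$. Since $U$ is an open neighbourhood of $x$ and $\mathrm X$ is approximable, we have $x\in\overline{I^+(x)}$. This is exactly the remark after the definition of approximability: $I^+(x)\cap U\neq\emptyset$ for every open neighbourhood $U$ of $x$. We therefore pick $y\in I^+(x)\cap U$. Then $x\ll y$ with $y\in U$, so $x\in I^-(y)\subseteq I^-(U)$. As $x\in U$ was arbitrary, $U\subseteq I^-(U)$, and combined with the hypothesis this gives $I^-(U)=U$.

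There is no real obstacle here. The only point needing care is that openness of $U$ is genuinely used: it lets us find points of $I^+(x)$ inside $U$ rather than merely near $x$. Approximability is used only through $x\in\overline{I^+(x)}$; the half $x\in\overline{I^-(x)}$ is not needed for this direction.
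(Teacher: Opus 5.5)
Your proof is correct and follows essentially the same route as the paper: you use approximability together with openness of $U$ to find $y\in U\cap I^+(x)$, which gives $x\in I^-(y)\subseteq I^-(U)$ and hence $U\subseteq I^-(U)$. Your remark that only the half $x\in\overline{I^+(x)}$ of approximability is needed is accurate.
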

\begin{proof}
    We show that $U \subseteq I^-(U)$ for any open set $U$. Indeed, let $x \in U$, by approximability $U \cap I^+(x) \neq \emptyset$, and for every $y \in U \cap I^+(x)$, we have that $x \in I^-(y) \subseteq I^-(U)$.
\end{proof}

In the following definition we introduce the indecomposable past sets.

\begin{definition}[Indecomposable past sets]
A nonempty past set $P \subseteq \mathrm X$ is called an \emph{indecomposable past set} if whenever $P = U_1 \cup U_2$ for past sets $U_1,U_2 \subseteq P$, then $P = U_1$ or $P = U_2$.
\end{definition}

\begin{lemma}[Pasts of points are indecomposable]
\label{lem: I^-(x) indecomp}
Let $\mathrm X$ be an approximable Lorentzian pre-length space. Then for each $x \in \mathrm X$, $I^-(x)$ is an indecomposable past set.
\end{lemma}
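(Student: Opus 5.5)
We have to check three things: $I^-(x)$ is nonempty, it is a past set, and it is indecomposable.

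Nonemptiness is immediate from approximability, since $x\in\overline{I^-(x)}$ forces $I^-(x)\neq\emptyset$.

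For the past property we use the paper's remark that $I^-(I^-(A))=I^-(A)$ in approximable spaces, applied with $A=\{x\}$. This can also be checked directly. The inclusion $I^-(I^-(x))\subseteq I^-(x)$ follows from transitivity of $\ll$. For the reverse inclusion, let $y\ll x$. The set $I^+(y)$ is an open neighbourhood of $x$, and approximability gives $x\in\overline{I^-(x)}$, so there is $z\in I^-(x)\cap I^+(y)$. Then $y\ll z\ll x$, so $y\in I^-(I^-(x))$. Alternatively, since $I^-(x)$ is open and $I^-(I^-(x))\subseteq I^-(x)$, Lemma~\ref{lemma: one inclusion for past sets} applies directly.

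Indecomposability is the main step. Suppose $I^-(x)=U_1\cup U_2$ with $U_1,U_2$ past and contained in $I^-(x)$, and suppose for contradiction that neither equals $I^-(x)$. Then there are $y_1\in I^-(x)\setminus U_1$ and $y_2\in I^-(x)\setminus U_2$. The plan is to find a single $z\in I^-(x)$ with $y_1\ll z$ and $y_2\ll z$. This $z$ lies in $U_1$ or in $U_2$. If $z\in U_1$, then $y_1\in I^-(z)\subseteq I^-(U_1)=U_1$, a contradiction. If $z\in U_2$, the same argument gives $y_2\in U_2$, again a contradiction.

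To produce $z$, note that $I^+(y_1)\cap I^+(y_2)$ is open, since chronological futures are open by lower semicontinuity of $\ell$. It contains $x$ because $y_1,y_2\ll x$. By approximability, $x\in\overline{I^-(x)}$, so this open neighbourhood of $x$ meets $I^-(x)$, and any point $z$ of the intersection works.

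The only delicate point is this use of openness of $I^+(y_i)$ combined with approximability. The rest is formal.
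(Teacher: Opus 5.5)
Your proof is correct and follows the paper's argument. You pick points of $I^-(x)$ missing from each of the two pieces, then use openness of $I^+(y_1)\cap I^+(y_2)$ together with approximability to find a common chronological upper bound $z\in I^-(x)$, and you get a contradiction from whichever piece contains $z$. Your explicit checks of nonemptiness and of $I^-(I^-(x))=I^-(x)$ are a welcome addition: the paper calls the past property trivial, but the reverse inclusion does use approximability.
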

\begin{proof}
    The set $I^-(x)$ is trivially a past set, we show that it is indecomposable. Indeed, suppose $I^-(x) = U \cup V$ with nonempty past proper subsets $U, V \subsetneq I^-(x)$ and let $y \in U \setminus V$, $z \in V \setminus U$. By approximability, there is $w \in I^-(x) \cap I^+(y) \cap I^+(z)$. Since $I^-(x) = U \cup V$, either $w \in U$ or $w \in V$. If $w \in U$, then $z \in I^-(w) \subseteq I^-(U) = U$, which is a contradiction. A similar contradiction is concluded assuming $w \in V$.
\end{proof}

Following \cite{GeKroPe} (and Lemma \ref{lem: I^-(x) indecomp}), we distinguish two kinds of indecomposable past sets.

\begin{definition}[Proper and terminal indecomposable past sets]\label{def: PIP TIP}
An indecomposable past set $P \subseteq \mathrm X$ is called \emph{proper}, and we call $P$ a PIP, if $P = I^-(x)$ for some $x \in \mathrm X$; it is called \emph{terminal}, and we call $P$  a TIP, otherwise.
\end{definition}

Here, \emph{proper} does not mean proper as a subset; rather, it indicates that \(P=I^-(x)\) for some \(x\in\mathrm X\), as opposed to an ideal point. Moreover, TIPs are unrelated to the \emph{tip of a set} introduced in \cite[Definition 2.1]{gigli2025hyperbolicbanachspaces}, which will not be used here.

In the next lemma we show a directedness property of indecomposable past sets which furnishes the key link between the future causal and directed completions.

\begin{lemma}[Directedness of indecomposable past sets]
\label{lem: directedness of IP}
Let $\mathrm X$ be an approximable Lorentzian pre-length space and $P \subseteq \mathrm X$ an indecomposable past set. Then for any $x,y \in P$ there exists $z \in P$ such that $x,y \in I^-(z)$.
\end{lemma}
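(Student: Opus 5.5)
The strategy is the classical argument of Geroch--Kronheimer--Penrose: suppose the conclusion fails and use this to split $P$ into two past sets, contradicting indecomposability. Fix $x,y\in P$ and define
\[
U_1 := I^-(x)\cap P = I^-(x), \qquad U_2 := \{\, w \in P : \text{there is no } z\in P \text{ with } x,w\in I^-(z)\,\}.
\]
The equality $U_1 = I^-(x)$ holds because $P$ is past, so $I^-(x)\subseteq I^-(P)=P$. The set $U_1$ is nonempty by approximability. A decomposition built directly from $U_2$ is awkward, because $U_2$ need not be a past set in an obvious way. I would therefore work with the following two sets:
\[
A := \{ w\in P : \exists z\in P,\ x,w \in I^-(z)\}, \qquad B := I^-(P\setminus A)\cap P = I^-(P\setminus A).
\]

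\textbf{Step 1: $A$ is a past set.} If $w\in A$, with witness $z$, and $v\ll w$, then $v\ll z$ by transitivity, so $v\in A$; hence $I^-(A)\subseteq A$. Conversely, if $w\in A$ with witness $z$, then $w\in I^-(z)$ and $z\in A$. To see $z\in A$, I would use openness of $I^-(\cdot)$ together with approximability to find $z'\in P\cap I^+(z)$: since $z\in P=I^-(P)$, there is $z'\in P$ with $z\ll z'$, and $x,z\in I^-(z')$ by transitivity. So $A\subseteq I^-(A)$ and $A=I^-(A)$. By definition $B$ is also a past set, since $I^-(I^-(S))=I^-(S)$ by approximability (or use Lemma \ref{lemma: one inclusion for past sets} together with transitivity), and $B\subseteq P$.

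\textbf{Step 2: $P=A\cup B$.} Let $w\in P$. Since $P=I^-(P)$, we have $w\ll w'$ for some $w'\in P$. If $w'\notin A$, then $w\in I^-(P\setminus A)=B$. If $w'\in A$, then the witness $z$ for $w'$ satisfies $w\ll w'\ll z$, so $w\in A$.

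\textbf{Step 3: conclude.} By indecomposability, either $P=A$ or $P=B$. If $P=A$, then $y\in A$ gives exactly the desired $z\in P$ with $x,y\in I^-(z)$, and we are done. So it remains to exclude $P=B$, and this is the main obstacle. Here $x\in P=B$ would mean $x\ll w$ for some $w\in P\setminus A$. But then, taking $w'\in P$ with $w\ll w'$ as in Step 2, both $x\ll w'$ and $w\ll w'$, so $w\in A$, which is a contradiction. Hence $x\notin B$, so $P\neq B$ and therefore $P=A$.

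Everything rests on two facts: $P=I^-(P)$, which supplies future points inside $P$, and transitivity of $\ll$. Approximability is used only to guarantee nonemptiness and the past-set identities.
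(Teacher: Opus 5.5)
Your proof is correct, but it uses a different decomposition from the paper's.

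The paper argues by contradiction. Assuming $I^+(x)\cap I^+(y)\cap P=\emptyset$, it covers $P$ by the two past sets $I^-(P\setminus I^+(x))$ and $I^-(P\setminus I^+(y))$. These are proper because they miss $x$ and $y$ respectively. The contradiction hypothesis is used exactly to get the covering, and pastness of the two sets comes from $I^-(I^-(S))=I^-(S)$, i.e.\ from approximability.

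You instead fix only $x$ and take the past set $A=I^-(P\cap I^+(x))$. You then split $P=A\cup I^-(P\setminus A)$. This splitting into two past sets holds for any past subset $A\subseteq P$, with no hypothesis. The only real content is $x\notin I^-(P\setminus A)$, after which indecomposability forces $P=A$.

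What your route buys:
\begin{itemize}
\item It is a direct proof rather than one by contradiction.
\item It proves a formally stronger, uniform statement: for every $x\in P$ one has $P=I^-(P\cap I^+(x))$, so a common chronological upper bound with $x$ exists for all $y\in P$ at once.
\item Your version needs no approximability at all. For $B\subseteq I^-(B)$, note that every $w\in P\setminus A$ has some $w'\in P$ with $w\ll w'$. Then $w'\notin A$, since $A$ is past, so $w\in B$ itself. Everything else uses only $P=I^-(P)$ and transitivity of $\ll$. So even the residual use of approximability you mention at the end can be removed.
\end{itemize}

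Two cosmetic points:
\begin{itemize}
\item The opening sets $U_1,U_2$ are never used and can be deleted.
\item In Step 1, note that $v\in P$ (because $P$ is past) before concluding $v\in A$.
\end{itemize}
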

\begin{proof}
    Suppose the claim is false, so there exist $x,y \in P$ such that
    \begin{equation} \label{eq: constradiction assumption}
        I^+(x) \cap I^+(y) \cap P = \emptyset.
    \end{equation}
    We define the past sets
    \[
    U:=I^-\big(P \setminus I^+(x)\big) \, ,  \qquad V:=I^-\big(P \setminus I^+(y)\big) \, 
    \]
    and we claim that $P = U \cup V$.
    
    It is clear that $U \cup V \subseteq P$. Conversely, let $p \in P$. Then, since $\mathrm X$ is approximable and $P$ is open, there exists $q \in P$ with $p \ll q$ and so, by \eqref{eq: constradiction assumption}, we have that $q \notin I^+(x) \cap I^+(y)$. If $q \notin I^+(x)$, then $q \in P \setminus I^+(x)$, hence $p \in I^-(q) \subseteq I^-(P \setminus I^+(x)) = U$. If $q \notin I^+(y)$, analogously we get that $p \in V$. In particular, we conclude $P = U \cup V$.
    
    Notice that $x \notin U$; indeed if that were not the case there would exist $q \in P \setminus I^+(x)$ with $x \ll q$, i.e., $q \in I^+(x)$, a contradiction. Similarly, $y \notin V$. Thus, $P=U\cup V$ decomposes $P$ into two proper past subsets, contradicting its indecomposability.
\end{proof}

In the following definition we introduce the past chronological inclusion relation.

\begin{definition}[Past chronological inclusion relation]
Let $\mathrm X$ be a Lorentzian pre-length space. The \emph{past chronological inclusion relation} $\preceq_{I^-}$ is defined by 
\[
x\preceq_{I^-}y
\quad\Longleftrightarrow\quad
I^-(x)\subseteq I^-(y).
\]
\end{definition}

\begin{remark}
  The relation $\preceq_{I^-}$ is always reflexive and transitive, and it is antisymmetric---hence a partial order---if and only if $\mathrm X$ is past distinguishing. In this case, Lemma~\ref{lem: directedness of IP} states that every indecomposable past set $P\subseteq\mathrm X$ is $\preceq_{I^-}$-directed. 
\end{remark}

\begin{lemma}[Upper bounds and unions of chronological pasts]\label{lem: upper bounds and unions}
Let $\mathrm X$ be a past distinguishing and approximable Lorentzian pre-length space and let $A \subseteq \mathrm X$ be nonempty. Then, for every $y \in \mathrm X$, the point $y$ is a $\preceq_{I^-}$-upper bound of $A$ if and only if $I^-(A) \subseteq I^-(y)$. Moreover, $A$ and the past set $I^-(A)$ have the same $\preceq_{I^-}$-upper bounds. In particular, $A$ admits a $\preceq_{I^-}$-supremum if and only if $I^-(A)$ does, and in this case the two suprema coincide.
\end{lemma}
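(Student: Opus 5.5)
The plan is to unwind the definition of $\preceq_{I^-}$-upper bound and use that $I^-$ commutes with unions, together with the approximability identity $I^-(I^-(A)) = I^-(A)$ recalled after the definition of approximability.

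\emph{First claim.} By definition, $y$ is a $\preceq_{I^-}$-upper bound of $A$ exactly when $I^-(a)\subseteq I^-(y)$ for every $a\in A$. This holds if and only if
\[
\bigcup_{a\in A} I^-(a)=I^-(A)\subseteq I^-(y).
\]
Both directions are immediate, since a union is contained in a set precisely when each member is. This step needs neither approximability nor past distinction.

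\emph{Same upper bounds.} Apply the first claim to the set $I^-(A)$ in place of $A$. This is legitimate because $I^-(A)$ is nonempty by approximability, as $A\neq\emptyset$. We get that $y$ is a $\preceq_{I^-}$-upper bound of $I^-(A)$ if and only if $I^-(I^-(A))\subseteq I^-(y)$. By approximability $I^-(I^-(A))=I^-(A)$, so this condition coincides with the one characterizing upper bounds of $A$. Hence $A$ and $I^-(A)$ have the same sets of upper bounds.

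\emph{Suprema.} Past distinction makes $\preceq_{I^-}$ a partial order. In a partial order the supremum of a set is the least element of its set of upper bounds, and this element is unique if it exists. Two sets with identical sets of upper bounds therefore have the same supremum, or both lack one.

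The only point requiring care is the identity $I^-(I^-(A))=I^-(A)$. The inclusion $\subseteq$ follows from transitivity of $\ll$. For $\supseteq$, take $x\ll a$ with $a\in A$. Since $I^-(a)$ is an open neighborhood of $x$, approximability gives some $z\in I^+(x)\cap I^-(a)$. Then $x\ll z\ll a$, so $x\in I^-(I^-(A))$. Everything else is purely order-theoretic, so I expect no real obstacle.
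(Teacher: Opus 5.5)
Your proposal is correct and matches the paper's proof: the first claim is a direct unwinding of the definition of $\preceq_{I^-}$, you apply it to $I^-(A)$ using that $I^-(A)$ is a past set, i.e.\ $I^-(I^-(A))=I^-(A)$, and you conclude because suprema depend only on the set of upper bounds. The only difference is that you verify the identity $I^-(I^-(A))=I^-(A)$ explicitly, via approximability and the openness of $I^-(a)$, whereas the paper simply calls $I^-(A)$ ``clearly a past set.''
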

\begin{proof}
The first assertion is a restatement of the definition of $\preceq_{I^-}$: we have $a \preceq_{I^-} y$ for every $a \in A$ if and only if $I^-(a) \subseteq I^-(y)$ for every $a \in A$, i.e., if and only if $I^-(A) \subseteq I^-(y)$. Furthermore, the set $I^-(A)$ is clearly a past set. Applying the first assertion to $A$ and to $I^-(A)$ we conclude that the two sets have the same upper bounds; since suprema are least upper bounds, the last assertion follows.
\end{proof}

\begin{lemma}[$\preceq_{I^-}$ vs.\ $\leq$]
\label{lemma: comparing chron inclusion and causal relation}
Let $\mathrm X$ be a past distinguishing and approximable Lorentzian pre-length space with closed causal pasts. Then the past chronological inclusion relation $\preceq_{I^-}$ agrees with the causal relation $\leq$.
\end{lemma}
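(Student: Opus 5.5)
We need to show that $x \leq y$ if and only if $I^-(x)\subseteq I^-(y)$. The plan is to prove the two implications separately. The first is a direct consequence of push-up; the second uses approximability together with closedness of $J^-(y)$.

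\emph{($\leq \Rightarrow \preceq_{I^-}$).} Suppose $x\leq y$ and let $z\in I^-(x)$, i.e.\ $z\ll x$. By the push-up property, $z\ll x\leq y$ gives $z\ll y$, so $z\in I^-(y)$. Hence $I^-(x)\subseteq I^-(y)$. This direction uses none of the extra hypotheses.

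\emph{($\preceq_{I^-}\Rightarrow \leq$).} Suppose $I^-(x)\subseteq I^-(y)$. By approximability $x\in\overline{I^-(x)}$, so there is a sequence $x_n\in I^-(x)$ with $x_n\to x$ in $(\mathrm X,\mathsf d)$. By assumption $x_n\in I^-(y)$, so $x_n\ll y$ and therefore $x_n\leq y$, since $\leq$ contains $\ll$. Thus $x_n\in J^-(y)$ for every $n$. Since $J^-(y)$ is closed, the limit satisfies $x\in J^-(y)$, that is, $x\leq y$.

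Past distinction is not actually needed for the equivalence of the two relations. Its role is to guarantee that both relations are partial orders: $\preceq_{I^-}$ is antisymmetric, and $\leq$ is antisymmetric by the remark that past distinguishing spaces are causal. This is what makes the statement ``agrees'' meaningful as an identification of partial orders.

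The only step requiring care is the second implication, where we must have a sequence approaching $x$ from its chronological past. Approximability provides exactly this, and it is the closedness of $J^-(y)$ that lets the relation pass to the limit. So no genuine obstacle remains.
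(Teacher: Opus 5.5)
Your proposal is correct and follows essentially the same route as the paper: push-up for $x\leq y \Rightarrow I^-(x)\subseteq I^-(y)$, and for the converse, approximability ($x\in\overline{I^-(x)}$) combined with $\overline{I^-(y)}\subseteq J^-(y)$ by closedness of $J^-(y)$, which your sequence argument implements. Your remark that past distinction is not needed for the equivalence itself, only to make both relations partial orders, is also accurate.
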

\begin{proof}
    Suppose that $x \leq y$. Then the push-up property immediately implies $x \preceq_{I^-} y$. Conversely, suppose $I^-(x) \subseteq I^-(y)$, hence we get that $\overline{I^-(x)} \subseteq \overline{I^-(y)}$. By approximability, $x \in \overline{I^-(x)}$ and so, since $\overline{I^-(y)} \subseteq J^-(y)$, which follows by the assumed closedness of $J^-(y)$, the conclusion follows.
\end{proof}

The following proposition characterizes indecomposable past sets in the setting of  approximable Lorentzian pre-length spaces with closed causal futures. The result is inspired by \cite[Theorem 2.1]{GeKroPe}.

\begin{proposition}[Description of indecomposable past sets]\label{prop:description of IP}
Let $\mathrm X$ be an approximable Lorentzian pre-length space with closed causal futures and $P \subseteq \mathrm X$ an indecomposable past set. Then there exists a sequence $(x_n)_{n \in \N} \subseteq P$ with $x_n \ll x_{n+1}$ for every $n \in \N$ and such that $P = \bigcup_{n=1}^\infty I^-(x_n)$. Moreover, for any such sequence precisely one of the following holds:
\begin{enumerate}
\item[i)] The sequence $(x_n)_{n \in \N}$ admits a subsequence converging to some $x \in \mathrm X$, then $P = I^-(x)$, so that $P$ is a PIP; if, in addition, $\mathrm X$ is past distinguishing, then every convergent
subsequence of $(x_n)_{n\in\mathbb N}$ has the same limit $x$.
\item[ii)] The sequence $(x_n)_{n \in \N}$ admits no convergent subsequence and $P = I^-(\mathsf c)$ with $\mathsf c := \{x_n: n \in \mathbb N \}$.
\end{enumerate}
 In case $\mathrm X$ is causally path-connected, $\mathsf c$ may be replaced by a future inextendible timelike curve $\gamma$, so that $P = I^-(x)$ or $P = I^-(\gamma)$. Moreover, if no sequence $(x_n)$ as above admits a convergent subsequence, then $P \neq I^-(x)$ for any $x \in \mathrm X$, i.e., $P$ is a TIP.
\end{proposition}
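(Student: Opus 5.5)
The argument splits into three parts: build the chain from separability and Lemma \ref{lem: directedness of IP}; prove the dichotomy, where closed causal futures enter; and handle the causally path-connected refinement.

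\textbf{Construction of the chain.} Since $(\mathrm X,\mathsf d)$ is separable, so is $P$. Fix a countable dense subset $\{q_k\}_{k\in\N}\subseteq P$. We define $x_n$ inductively.
\begin{itemize}
\item Set $x_1:=q_1$.
\item Given $x_n\in P$, apply Lemma \ref{lem: directedness of IP} to the pair $x_n,q_{n+1}$. This gives $x_{n+1}\in P$ with $x_n\ll x_{n+1}$ and $q_{n+1}\ll x_{n+1}$.
\end{itemize}
Then $\bigcup_n I^-(x_n)\subseteq I^-(P)=P$. For the converse, take $p\in P$. Since $P$ is open, $I^+(p)\cap P$ is a nonempty open set by approximability, so it contains some $q_k$. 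Hence $p\ll q_k\ll x_k$, and push-up gives $p\in I^-(x_k)$. So $P=\bigcup_n I^-(x_n)$. Since the $x_n$ are chronologically increasing, this union also equals $I^-(\mathsf c)$ for $\mathsf c=\{x_n\}$.

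\textbf{The dichotomy.} The two cases are exclusive by definition, so only the conclusion in case i) needs proof. Suppose $x_{n_k}\to x$.

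First, $P\subseteq I^-(x)$. Let $p\in P$; then $p\ll x_m$ for some $m$. By approximability pick $p'\in I^+(p)\cap P$, which lies in $I^-(x_{m'})$ for some $m'$. For $n_k\ge m'$ we have $p'\ll x_{n_k}$, so $x_{n_k}\in J^+(p')$. Closedness of $J^+(p')$ gives $p'\le x$. Then $p\ll p'\le x$, and push-up yields $p\ll x$.

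Second, $I^-(x)\subseteq P$. Let $y\ll x$. Since $I^+(y)$ is open, $x_{n_k}\in I^+(y)$ for $k$ large, so $y\in I^-(x_{n_k})\subseteq P$.

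Thus $P=I^-(x)$. If $\mathrm X$ is past distinguishing, any other subsequential limit $x'$ also satisfies $I^-(x')=P=I^-(x)$, hence $x'=x$.

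For the final "moreover": suppose no chain as above has a convergent subsequence, and assume $P=I^-(x)$. By approximability choose $x_n\in I^-(x)$ with $\mathsf d(x_n,x)<1/n$, chosen inductively with $x_n\ll x_{n+1}$. This is possible because $I^+(x_n)\cap I^-(x)$ is a neighbourhood-meeting set near $x$: $x\in\overline{I^-(x)}$, and by Lemma \ref{lem: directedness of IP} applied inside $I^-(x)$ we may push points up. The resulting chain converges to $x$, and its union of pasts is $I^-(x)$ by the second inclusion argument above. This contradicts the assumption, so $P$ is a TIP.

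\textbf{The causally path-connected case.} Join consecutive points $x_n\ll x_{n+1}$ by timelike curves and concatenate them into a timelike curve $\gamma$. Then $I^-(\gamma)=P$: $\gamma\subseteq I^-(x_{n+1})\cup\{x_{n+1}\}\subseteq P\cup\{x_n\}$, and $I^-$ of each such segment lies in $I^-(x_{n+1})$. In case ii), $\gamma$ must be future inextendible. Indeed, a limit point $x$ of $\gamma$ would give a convergent subsequence of the $x_n$, up to reparametrisation, which is case i).

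The step I expect to be the main obstacle is the inclusion $P\subseteq I^-(x)$, which requires combining closed causal futures with push-up; the diagonal chain construction in the TIP statement needs care as well.
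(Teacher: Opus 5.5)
Your proof is correct and follows essentially the paper's route: a countable dense subset together with Lemma \ref{lem: directedness of IP} to build the chain, closedness of $J^+$ plus push-up for $P\subseteq I^-(x)$, openness of $I^+$ for the reverse inclusion, and concatenation in the path-connected case; you also justify the TIP and inextendibility assertions, which the paper leaves implicit. The one imprecise step is in the TIP argument: $x_{n+1}$ should be chosen in $I^-(x)\cap I^+(x_n)$ within distance $1/(n+1)$ of $x$. This is possible because $I^+(x_n)$ is an open neighbourhood of $x$ and $\mathrm X$ is approximable, not because of Lemma \ref{lem: directedness of IP}, which gives no control on the distance to $x$.
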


\begin{proof}
    Let $(y_n)_{n \in \N}$ be countable and dense in $P$.
    We claim that $P = \bigcup_{n=1}^\infty I^-(y_n)$. Indeed, let $p \in P$, by approximability there exists $q \in P$ with $p \ll q$. Then the set $I^+(p) \cap P$ is an open neighborhood of $q$, so $y_n \in I^+(p) \cap P$ for some $n$, so that $p \in I^-(y_n)$. 
    
    We now construct a sequence $(x_n)_{n \in \N}$ such that $x_n \ll x_{n+1}$ and $y_n \ll x_n$ for all $n$, in which case we automatically have $P = \bigcup_{n = 1}^\infty I^-(x_n)$.  Since $P$ is open and $\mathrm X$ is approximable, there exists $x_1 \in P$ with $y_1 \ll x_1$. By Lemma \ref{lem: directedness of IP}, there exists $x_2 \in P$ such that $x_1,y_2 \ll x_2$. Iteratively, we obtain a sequence $(x_n)$ with the claimed properties.

    We prove the dichotomy. Assume that $x_{n_k} \to x$ for some subsequence and some $x \in \mathrm X$. For fixed $n \in \N$ we have $x_{n_k} \in J^+(x_n)$ for every $k$ large enough, and $J^+(x_n)$ is closed by assumption, whence $x_n \leq x$. Since $x_n \ll x_{n+1} \leq x$, the push-up property gives $x_n \ll x$ for every $n$, so that $P = \bigcup_{n=1}^\infty I^-(x_n) \subseteq I^-(x)$. Conversely, let $p \ll x$: the set $I^+(p)$ is open and contains $x$, hence it contains $x_{n_k}$ for $k$ large enough, i.e., $p \in I^-(x_{n_k}) \subseteq P$. Therefore $P = I^-(x)$. If $\mathrm X$ is past distinguishing and $x'$ is the limit of another
convergent subsequence, the same argument gives $I^-(x')=P=I^-(x),$ and hence $x'=x$.

    If instead $(x_n)_{n \in \N}$ admits no convergent subsequence, then, $P = I^-(\mathsf c)$ with $\mathsf c := \{x_n : n \in \N\}$ by construction.
    
    Finally, if $X$ is causally path-connected, one may connect $x_n$ to $x_{n+1}$ with a timelike curve $\gamma_n$ and let $\gamma$ be the concatenation of the $\gamma_n$, so that $P = I^-(\gamma)$.
\end{proof}

We conclude the subsection with the definition of future causal completion. 

\begin{definition}[Future causal completion] \label{def: Future causal completion}
Let $\mathrm X$ be a Lorentzian pre-length space. We define
\begin{equation}
    \mathcal C^+(\mathrm X):=\big\{P \subseteq \mathrm X : P \emph{\text{ is an indecomposable past set}}\big\} \, .
\end{equation}
We say that the partially ordered set $(\mathcal C^+(\mathrm X), \subseteq)$, where $\subseteq$ is the inclusion of subsets of $\mathrm X$, is the future causal completion of $\mathrm X$.
\end{definition}

\subsection{Proof of the equivalence}\label{subsection: proof of the equivalence}

In this subsection, we prove the main result of this section, namely that the future causal completion, in the sense of Definition \ref{def: Future causal completion} of an approximable and past distinguishing Lorentzian pre-length space satisfying the additional assumption \eqref{eq:SC} (abbreviating supremum-compatibility) below coincides with its directed completion. We start with a preliminary lemma that is closely related to \cite[Theorem 5 (4)]{Harris_universality}.

\begin{lemma}[Causal completion is directed complete]
\label{lemma: causal boundary directed complete}
Let $\mathrm X$ be an approximable Lorentzian pre-length space. Then $(\mathcal C^+(\mathrm X),\subseteq)$ is a directed complete partially ordered set. Moreover, the supremum of a directed family $\mathcal D \subseteq \mathcal C^+(\mathrm X)$ is given by $\bigcup_{P \in \mathcal D} P$, which is, in particular, an indecomposable past set.
\end{lemma}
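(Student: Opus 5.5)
Let $\mathcal D \subseteq \mathcal C^+(\mathrm X)$ be a directed family, i.e.\ nonempty and such that any two members are contained in a third. The plan is to show that $Q := \bigcup_{P \in \mathcal D} P$ is an indecomposable past set. Once this is done, $Q$ is clearly the supremum in $(\mathcal C^+(\mathrm X),\subseteq)$. It is an upper bound of $\mathcal D$. Any upper bound $R \in \mathcal C^+(\mathrm X)$ contains every $P \in \mathcal D$, hence contains $Q$. Antisymmetry of $\subseteq$ is automatic, so $(\mathcal C^+(\mathrm X),\subseteq)$ is a partially ordered set, and directed completeness follows.

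\emph{Past set.} Since $I^-$ commutes with unions, $I^-(Q) = \bigcup_{P\in\mathcal D} I^-(P) = \bigcup_{P \in \mathcal D} P = Q$. Moreover $Q$ is nonempty, because $\mathcal D$ is nonempty and each of its members is nonempty.

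\emph{Indecomposability.} This is the main step. Suppose $Q = U_1 \cup U_2$ with past sets $U_1,U_2 \subseteq Q$ and $U_1 \neq Q \neq U_2$. Pick $x \in Q\setminus U_1$ and $y \in Q \setminus U_2$, say $x \in P_1$ and $y \in P_2$ with $P_1,P_2\in\mathcal D$. By directedness there is $P \in \mathcal D$ with $P_1 \cup P_2 \subseteq P$, so $x,y\in P$. Now set $V_i := U_i \cap P$.
\begin{itemize}
\item Each $V_i$ is a past set: $I^-(V_i) \subseteq I^-(U_i)\cap I^-(P) = U_i \cap P = V_i$. Both sets are open, since past sets are open, so Lemma \ref{lemma: one inclusion for past sets} gives equality. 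Here approximability is used. Alternatively, one argues directly with approximability inside $U_i\cap P$: for $z \in U_i \cap P$, the open set $U_i \cap P$ contains a point of $I^+(z)$.
\item $P = V_1 \cup V_2$, because $P \subseteq Q = U_1\cup U_2$.
\end{itemize}
Indecomposability of $P$ then yields $P = V_1 \subseteq U_1$ or $P = V_2\subseteq U_2$. The first case contradicts $x \in P\setminus U_1$, and the second contradicts $y\in P \setminus U_2$. Hence $Q = U_1$ or $Q = U_2$.

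The only subtle point is whether the empty set could be allowed as a past set in a decomposition, which would make some $V_i$ empty. The argument above does not need nonemptiness of the $V_i$: $P=V_1\cup V_2$ with $V_1=\emptyset$ simply forces $P=V_2$, which is consistent with the definition. I therefore expect no real obstacle beyond checking that the intersection of two open past sets is past, which approximability handles.
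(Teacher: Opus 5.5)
Your proposal is correct and follows essentially the same route as the paper. The paper also shows the union is a past set, then intersects a putative decomposition with a single member of $\mathcal D$ containing a witness from each side, uses Lemma \ref{lemma: one inclusion for past sets} to see that the two intersections are past, and contradicts indecomposability of that member; your remarks on nonemptiness of the union and on possibly empty pieces are harmless details the paper leaves implicit.
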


\begin{proof}
    It is clear that $(\mathcal C^+(\mathrm X),\subseteq)$ is a partially ordered set. Now we show that it is directed complete. Let $ \mathcal D \subseteq \mathcal C^+(\mathrm X)$ be $\subseteq$-directed. We claim that $\mathcal D$ admits a $\subseteq$-supremum. Let us write $\mathcal D = \{ P_i: i \in I\}$ for convenience, where $I$ is some index set. We claim $ P:=\bigcup_{i \in I} P_i$
    is the supremum of $\mathcal D$. 
    
    It is easy to check that $P$ is a past set. Moreover, suppose by contradiction that $P$ is decomposable, so that $P = U \cup V$ with proper past subsets $U,V \subseteq P$. Then
    \begin{equation}
        P = U \cup V = \bigcup_{i \in I} \, (P_i \cap U) \cup (P_i \cap V).
    \end{equation}
    Suppose $x,y \in P$ with $x \in U \setminus V$, $y \in V \setminus U$. Since $D$ is directed, there exists some $k \in I$ such that $x,y \in P_k$, thus $P_k = (P_k \cap U) \cup (P_k \cap V)$ is a decomposition of $P_k$ into proper subsets. But now $P_k \cap U$ and $P_k \cap V$ are past sets, indeed 
    \[
    I^-(P_k \cap U) \subseteq I^-(P_k) \cap I^-(U) = P_k \cap U \, .
    \]
    Since $X$ is approximable and $P_k \cap U$ is open, Lemma \ref{lemma: one inclusion for past sets} implies that $P_k \cap U$ is a past set. This is a contradiction, since $P_k \in \mathcal C^+(\mathrm X)$ is indecomposable past.
    
    Thus, $P \in \mathcal C^+(\mathrm X)$ and clearly it is the supremum of $\mathcal D$.
\end{proof}

By the very definition of directed completion, the map $\iota$ is required to have the {\sf Mcp}. By Lemma \ref{lemma: causal boundary directed complete}, for every $\preceq_{I^-}$-directed $D \subseteq \mathrm X$ the $\preceq_{I^-}$-supremum of $\iota(D)$ in $\mathcal C^+(\mathrm X)$ is $I^-(D)$; hence $\iota$ has the {\sf Mcp} if and only if
\begin{equation}\label{eq:SC}\tag{SC}
I^-(\sup D) = I^-(D) \qquad \text{for every $\preceq_{I^-}$-directed } D \subseteq \mathrm X \text{ admitting a $\preceq_{I^-}$-supremum}.
\end{equation}
This is an assumption on $\mathrm X$, and not a consequence of approximability and past distinction. We emphasize that \eqref{eq:SC} is a new notion of completeness that does not follow from the property $\sup I^-(x) = x$ (the $\sup$ is of course with respect to $\preceq_{I^-}$), which holds in every approximable and past distinguishing Lorentzian pre-length space (cf.\ the proof of Theorem \ref{theorem: equivalence of causal and directed completions} below), but is implied by forward completeness (in particular, by global hyperbolicity), cf.\ Proposition \ref{prop: forward implies SC}.

The next lemma identifies the geometric meaning of \eqref{eq:SC}: It holds precisely when no ideal point of $\mathrm X$ is captured by a supremum.

\begin{lemma}[Reformulation of \eqref{eq:SC}]\label{lem: SC criterion}
Let $\mathrm X$ be an approximable and past distinguishing Lorentzian pre-length space, let $D \subseteq \mathrm X$ be $\preceq_{I^-}$-directed and admitting a $\preceq_{I^-}$-supremum $x := \sup D$. Then
\begin{enumerate}
\item[i)] $I^-(D) \subseteq I^-(x)$;
\item[ii)] $I^-(D) = I^-(x)$ if and only if $I^-(D) = I^-(y)$ for some $y \in \mathrm X$.
\end{enumerate}
Consequently, $\mathrm X$ satisfies \eqref{eq:SC} if and only if no TIP admits a $\preceq_{I^-}$-supremum in $\mathrm X$.
\end{lemma}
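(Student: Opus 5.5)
Item i) is immediate from Lemma \ref{lem: upper bounds and unions}: since $x=\sup D$ is a $\preceq_{I^-}$-upper bound of $D$, that lemma gives $I^-(D)\subseteq I^-(x)$.

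For ii), one direction is trivial: if $I^-(D)=I^-(x)$, take $y=x$. For the converse, suppose $I^-(D)=I^-(y)$ for some $y\in\mathrm X$. By Lemma \ref{lem: upper bounds and unions}, $D$ and $I^-(D)=I^-(y)$ have the same upper bounds, and $x=\sup D$ is also the supremum of $I^-(y)$. It then suffices to show that $\sup I^-(y)=y$ with respect to $\preceq_{I^-}$. The upper bound property is clear: $I^-(I^-(y))\subseteq I^-(y)$, so $y$ is an upper bound by the first assertion of Lemma \ref{lem: upper bounds and unions}. It is least because any upper bound $z$ of $I^-(y)$ satisfies $I^-(I^-(y))\subseteq I^-(z)$. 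By approximability, $I^-(I^-(y))=I^-(y)$, since every $p\ll y$ has some $q$ with $p\ll q\ll y$, as recalled after the definition of approximability. Hence $I^-(y)\subseteq I^-(z)$, i.e. $y\preceq_{I^-}z$. Thus $y=\sup I^-(y)=x$ by uniqueness of suprema, which uses past distinction for antisymmetry. Therefore $I^-(D)=I^-(y)=I^-(x)$.

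For the final characterization, first assume \eqref{eq:SC}. Let $P$ be a TIP admitting a $\preceq_{I^-}$-supremum $x$. By Lemma \ref{lem: directedness of IP}, $P$ is $\preceq_{I^-}$-directed, since $a\ll z$ implies $a\preceq_{I^-}z$ by push-up and transitivity. Since $P$ is a past set, $I^-(P)=P$, and \eqref{eq:SC} gives $P=I^-(x)$. This contradicts $P$ being terminal. Conversely, assume no TIP has a supremum, and let $D$ be directed with supremum $x$. Set $P:=I^-(D)$, which is nonempty by approximability. I expect the main step here is checking that $P$ is indecomposable. It is a union of the sets $I^-(d)$, $d\in D$, which are indecomposable by Lemma \ref{lem: I^-(x) indecomp}. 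Moreover, $\{I^-(d)\}_{d\in D}$ is $\subseteq$-directed because $D$ is $\preceq_{I^-}$-directed. By Lemma \ref{lemma: causal boundary directed complete}, the union $P$ is therefore an indecomposable past set. By Lemma \ref{lem: upper bounds and unions}, $P$ has supremum $x$, so $P$ cannot be a TIP. Hence $P=I^-(y)$ for some $y$, and ii) yields $I^-(D)=I^-(x)$, which is \eqref{eq:SC}.
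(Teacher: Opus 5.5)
Your proof is correct. For i) and for the final equivalence it is the paper's argument. In the converse direction you argue directly rather than by contradiction, but the reasoning is the same: $I^-(D)$ is an indecomposable past set by Lemmas \ref{lem: I^-(x) indecomp} and \ref{lemma: causal boundary directed complete}, it has supremum $x$ by Lemma \ref{lem: upper bounds and unions}, hence it is a PIP, and ii) concludes.

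The only real difference is in the converse of ii). You pass through the identity $\sup I^-(y)=y$, which costs you approximability (to get $I^-(I^-(y))=I^-(y)$) and antisymmetry. The paper instead observes directly that $I^-(D)=I^-(y)$ makes $y$ an upper bound of $D$, by the first assertion of Lemma \ref{lem: upper bounds and unions}. Hence $x\preceq_{I^-}y$ by the definition of supremum, i.e.\ $I^-(x)\subseteq I^-(y)=I^-(D)$, and together with i) this gives equality.

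That sandwich argument is shorter and uses neither approximability nor past distinction. Your route gives $x=y$ outright, but this also follows from the paper's conclusion $I^-(x)=I^-(y)$ by past distinction.
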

\begin{proof}
i) For every $d \in D$, we have $d \preceq_{I^-} x$, i.e., $I^-(d) \subseteq I^-(x)$, and the claim follows.

ii) One implication is trivial, since $I^-(x)$ is the chronological past of a point. Conversely, assume $I^-(D) = I^-(y)$. By Lemma \ref{lem: upper bounds and unions}, $y$ is a $\preceq_{I^-}$-upper bound of $D$, and hence $x \preceq_{I^-} y$ by the definition of supremum, that is, $I^-(x) \subseteq I^-(y) = I^-(D)$. Together with i), this gives $I^-(D) = I^-(x)$.

For the last assertion, assume first that some TIP $P$ admits a supremum. By Lemma \ref{lem: directedness of IP}, $P$ is a directed subset of $(\mathrm X, \preceq_{I^-})$, and $I^-(P) = P$ because $P$ is a past set; since $P$ is not the chronological past of any point, ii) shows that $P = I^-(P) \neq I^-(\sup P)$, so that \eqref{eq:SC} fails for $D := P$. Conversely, assume that \eqref{eq:SC} fails for some directed $D$ with supremum $x$. Then $I^-(D) \neq I^-(x)$, and $I^-(D)$ is an indecomposable past set by Lemmas \ref{lem: I^-(x) indecomp} and \ref{lemma: causal boundary directed complete}; by ii) it is not the chronological past of any point, i.e., it is a TIP. By Lemma \ref{lem: upper bounds and unions}, $D$ and $I^-(D)$ have the same suprema, so $I^-(D)$ is a TIP admitting the supremum $x$.
\end{proof}

\begin{remark}
Lemma \ref{lem: SC criterion} makes the failure of \eqref{eq:SC} transparent: it can only occur when a terminal indecomposable past set, i.e., an ideal point of $\mathrm X$, admits a \emph{least} upper bound in $\mathrm X$. Note that this is a strictly stronger requirement than the existence of upper bounds, which is a common occurrence in spacetimes with holes or naked singularities.
\end{remark}

The following theorem is the main result of this section. We show that, under some assumptions, the directed completion of a Lorentzian pre-length space coincides with the future causal completion. 

\begin{theorem}[Equivalence of causal and directed completions]
\label{theorem: equivalence of causal and directed completions}
Let $\mathrm X$ be an approximable and past distinguishing Lorentzian pre-length space satisfying \eqref{eq:SC} and define the map $\iota: \mathrm X \to \mathcal C^+(\mathrm X)$ by $x \mapsto \iota (x) : = I^-(x)$. Then, $( \mathcal C^+ (\mathrm X), \subseteq)$ is the directed completion of $(\mathrm X,\preceq_{I^-})$. 

Moreover, $(\mathcal C^+(\mathrm X),\subseteq)$ is the directed completion of $(\mathrm X,\leq)$ provided $\mathrm X$ has closed causal pasts.
\end{theorem}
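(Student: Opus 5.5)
The plan is to verify the universal property directly, using the standard model of a directed completion: a dcpo $Y$ containing the image of $\mathrm X$ such that every element of $Y$ is the supremum of a directed subset of $\iota(\mathrm X)$.

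\textbf{Step 1: properties of $\iota$.} By Lemma \ref{lemma: causal boundary directed complete}, $(\mathcal C^+(\mathrm X),\subseteq)$ is directed complete, and directed suprema are unions. Lemma \ref{lem: I^-(x) indecomp} shows $\iota$ takes values in $\mathcal C^+(\mathrm X)$. The map $\iota$ is an order embedding, since $x\preceq_{I^-}y$ if and only if $\iota(x)\subseteq\iota(y)$. By the discussion before \eqref{eq:SC}, $\iota$ has the {\sf Mcp} exactly because \eqref{eq:SC} holds.

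\textbf{Step 2: $\iota(\mathrm X)$ generates.} Every $P\in\mathcal C^+(\mathrm X)$ is the directed union $P=\bigcup_{x\in P}I^-(x)$. This family is directed by Lemma \ref{lem: directedness of IP}, so $P=\sup\iota(P)$ in $\mathcal C^+(\mathrm X)$.

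\textbf{Step 3: define the extension.} Given a dcpo $Z$ and $T:\mathrm X\to Z$ with the {\sf Mcp}, set $\bar T(P):=\sup T(P)$.
\begin{itemize}
\item \emph{Well-defined.} $P$ is $\preceq_{I^-}$-directed, and $T$ is monotone: the {\sf Mcp} applied to $\{x,y\}$ with $x\preceq_{I^-}y$ gives $T(x)\le T(y)$. Hence $T(P)$ is directed and $\bar T(P)$ exists in $Z$.
\item \emph{$\bar T\circ\iota=T$.} We need $\sup T(I^-(x))=T(x)$. Observe that $\sup I^-(x)=x$ in $(\mathrm X,\preceq_{I^-})$. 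Indeed, by Lemma \ref{lem: upper bounds and unions}, $y$ is an upper bound of $I^-(x)$ if and only if $I^-(I^-(x))\subseteq I^-(y)$. Approximability gives $I^-(I^-(x))=I^-(x)$, so the upper bounds of $I^-(x)$ are exactly the $y$ with $x\preceq_{I^-}y$, whose least element is $x$. The {\sf Mcp} of $T$ then yields the identity.
\item \emph{$\bar T$ has the {\sf Mcp}.} Take a directed $\mathcal D\subseteq\mathcal C^+(\mathrm X)$ with union $Q$. Monotonicity of $\bar T$ is clear, which gives $\sup\bar T(\mathcal D)\le\bar T(Q)$. Conversely, each $x\in Q$ lies in some $P\in\mathcal D$, so $T(x)\le\bar T(P)$, and hence $\bar T(Q)\le\sup\bar T(\mathcal D)$.
\item \emph{Uniqueness.} Any {\sf Mcp} extension $S$ satisfies $S(P)=S(\sup\iota(P))=\sup S(\iota(P))=\sup T(P)$ by Step 2.
\end{itemize}

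\textbf{Step 4: the causal order.} When $\mathrm X$ also has closed causal pasts, Lemma \ref{lemma: comparing chron inclusion and causal relation} gives $\preceq_{I^-}=\leq$. The two posets are then identical, and the second claim follows from the first.

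The main subtlety is the {\sf Mcp} of $\iota$. This is exactly what \eqref{eq:SC} encodes, so it is handled by the hypothesis. The remaining steps are routine.
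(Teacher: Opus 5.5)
Your proposal is correct and follows the paper's proof essentially step by step. The ingredients match: the {\sf Mcp} of $\iota$ from Lemma \ref{lemma: causal boundary directed complete} and \eqref{eq:SC}; the extension $\bar T(P):=\sup T(P)$, justified by Lemma \ref{lem: directedness of IP} and the monotonicity of $T$; uniqueness from $P=\sup\iota(P)$; and Lemma \ref{lemma: comparing chron inclusion and causal relation} for the causal order.

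You also spell out two points the paper only asserts, namely that $\sup I^-(x)=x$ and the two inequalities giving the {\sf Mcp} of $\bar T$. Your opening ``standard model'' description (a dcpo in which every element is a directed supremum of elements of $\iota(\mathrm X)$) is not a valid characterization of directed completions in general, but this is harmless because you verify the universal property directly.
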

\begin{proof}
   First of all, $\iota$ has the {\sf Mcp}: if $D \subseteq \mathrm X$ is $\preceq_{I^-}$-directed and admits a $\preceq_{I^-}$-supremum, then $\iota(D)$ is $\subseteq$-directed in $\mathcal C^+(\mathrm X)$ because $\iota$ is monotone, its $\subseteq$-supremum is $I^-(D)$ by Lemma \ref{lemma: causal boundary directed complete}, and this set equals $I^-(\sup D) = \iota(\sup D)$ by \eqref{eq:SC}.

   Let $(Z,\leq_Z)$ be a directed complete partially ordered set. Let $T:(\mathrm X,\preceq_{I^-}) \to (Z,\leq_Z)$ be a map satisfying the {\sf Mcp}. Notice that $T$ is monotone: if $x \preceq_{I^-} y$, then $D := \{x,y\}$ is directed with $\sup D = y$, and the {\sf Mcp} gives $T(y) = \sup\{T(x),T(y)\} \geq_Z T(x)$. We define the map $\bar T:(\mathcal C^+(\mathrm X), \subseteq) \to (Z,\leq_Z)$ as follows:
   \begin{equation} \label{eq:3.5}
       \bar T(P):=\sup \left \{T(x) : x \in P \right \}.
   \end{equation}
   This map is well-defined. Indeed, if $P \in \mathcal C^+(\mathrm X)$, then $P \subseteq X$ is $\preceq_{I^-}$-directed by Lemma \ref{lem: directedness of IP}. Thus, since $T$ is monotone, $T(P) \subseteq Z$ is $\leq_Z$-directed and so $T(P)$ admits a supremum since $(Z,\leq_Z)$ is directed complete. Let us stress that $P$ itself need not admit a $\preceq_{I^-}$-supremum in $\mathrm X$, so that the {\sf Mcp} of $T$ cannot be invoked directly here.

   We claim that $\bar T \circ \iota =T$. First of all, notice that $I^-(x) \subseteq X$ is indecomposable past (cf.\ Lemma \ref{lem: I^-(x) indecomp}) and thus $\preceq_{I^-}$-directed by Lemma \ref{lem: directedness of IP}, and it is an elementary consequence of approximability that $\sup I^-(x) = x$. Now, using the {\sf Mcp} of $T$, we obtain that
   \begin{equation}
       \bar T \circ \iota(x) = \bar T(I^-(x)) = \sup \left \{T(y) : y \in I^-(x) \right \} = T (\sup I^-(x) ) = T(x).
   \end{equation}
   Now, we show that $\bar T$ has the {\sf Mcp}. To this end, let $\mathcal D \subseteq \mathcal C^+(\mathrm X)$ be $\subseteq$-directed. By Lemma \ref{lemma: causal boundary directed complete}, $\sup \mathcal D$ is well-defined and coincides with $\bigcup_{P \in \mathcal D} P$. Hence
   \begin{equation}
       \bar T( \sup \mathcal D)= \sup \big \{ \sup  \{T(x) : x \in P  \} : P \in \mathcal D\big \} = \sup \left \{ \bar T(P) : P \in \mathcal D \right \} \, . 
   \end{equation}
   The interchange of the two suprema in the last identity is justified by the fact that, for a directed family of directed subsets of $(Z,\leq_Z)$, the supremum of the suprema and the supremum of the union coincide, both being characterized by the same set of upper bounds.

   Finally, the uniqueness of $\bar T$ is easy to establish. It follows from the fact that any past set $P$ is equal to $\bigcup_{x \in P} I^-(x)$ and that, by Lemma \ref{lem: directedness of IP}, the family $\{\iota(x) : x \in P\}$ is $\subseteq$-directed with $\subseteq$-supremum $P$ in $\mathcal C^+(\mathrm X)$, so any possible extension map with the {\sf Mcp} needs to satisfy \eqref{eq:3.5}. In particular, $(\mathcal C^+( \mathrm X),\subseteq)$ is the directed completion of $(\mathrm X, \preceq_{I^-})$.

   Moreover, the last claim concerning the causal relation follows from the compatibility in Lemma \ref{lemma: comparing chron inclusion and causal relation}.
\end{proof}

In the above context, the set $\mathcal C^+(\mathrm X) \setminus \iota(\mathrm X)$ is the \emph{future causal boundary} of $\mathrm X$. We conclude this section by exhibiting a checkable sufficient condition for \eqref{eq:SC}, and we verify that it holds in the most relevant class of examples, namely, globally hyperbolic (smooth) spacetimes.

\begin{proposition}[Forward completeness implies \eqref{eq:SC}]\label{prop: forward implies SC}
Let $\mathrm X$ be an approximable and past distinguishing Lorentzian pre-length space with closed causal futures. If $\mathrm X$ is forward complete, then it satisfies \eqref{eq:SC}.
\end{proposition}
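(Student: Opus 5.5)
By Lemma \ref{lem: SC criterion}, it suffices to show that no TIP admits a $\preceq_{I^-}$-supremum in $\mathrm X$. Equivalently, I will show directly that for every $\preceq_{I^-}$-directed $D$ with supremum $x:=\sup D$ we have $I^-(x)\subseteq I^-(D)$; the reverse inclusion is item i) of Lemma \ref{lem: SC criterion}.

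The plan is as follows. Set $P:=I^-(D)$. By Lemmas \ref{lem: I^-(x) indecomp} and \ref{lemma: causal boundary directed complete}, $P$ is an indecomposable past set, being the directed union of the PIPs $I^-(d)$, $d\in D$. By Lemma \ref{lem: upper bounds and unions}, $x$ is also the supremum of $P$.

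Proposition \ref{prop:description of IP} applies, since $\mathrm X$ has closed causal futures. It yields a sequence $(x_n)\subseteq P$ with $x_n\ll x_{n+1}$ and $P=\bigcup_n I^-(x_n)$. Each $x_n\in P=I^-(D)$ lies below some $d\in D$, and $I^-(d)\subseteq I^-(x)$, so $x_n\ll x$ and in particular $x_n\leq x$. Thus the sequence is chronologically increasing and order-bounded by $\bar x:=x$, and forward completeness gives $x_n\to y$ for some $y\in\mathrm X$. Case i) of Proposition \ref{prop:description of IP} then gives $P=I^-(y)$. Finally, item ii) of Lemma \ref{lem: SC criterion} yields $I^-(D)=I^-(x)$, which is \eqref{eq:SC}.

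One could also argue the last step directly. By Lemma \ref{lem: upper bounds and unions}, $y$ is an upper bound of $D$, so $I^-(x)\subseteq I^-(y)=I^-(D)$.

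The main point to check is the bound $x_n\leq x$ in the causal relation $\leq$, as opposed to $\preceq_{I^-}$, because forward completeness is formulated with $\leq$. This holds because $x_n\in I^-(d)\subseteq I^-(x)$ gives $x_n\ll x$, and $\ll\subseteq\leq$. So no closedness of causal pasts is needed, and closed causal futures are used only through Proposition \ref{prop:description of IP}.
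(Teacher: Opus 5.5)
Your proposal is correct and follows the paper's proof essentially step by step. In both, $I^-(D)$ is an indecomposable past set, Proposition \ref{prop:description of IP} supplies the chronological sequence $(x_n)$, which is bounded above by $\sup D$ via Lemma \ref{lem: SC criterion} i), and forward completeness together with case i) gives $I^-(D)=I^-(\lim_n x_n)$, so that Lemma \ref{lem: SC criterion} ii) concludes.
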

\begin{proof}
Let $D \subseteq \mathrm X$ be $\preceq_{I^-}$-directed and admitting a $\preceq_{I^-}$-supremum $x$. By Lemma \ref{lemma: causal boundary directed complete}, $I^-(D)$ is an indecomposable past set, so that Proposition \ref{prop:description of IP} provides a sequence $(x_n)_{n \in \N} \subseteq I^-(D)$ with $x_n \ll x_{n+1}$ and $I^-(D) = \bigcup_{n=1}^\infty I^-(x_n)$. By Lemma \ref{lem: SC criterion} i) we have $I^-(D) \subseteq I^-(x)$, hence $x_n \leq x$ for every $n$. By forward completeness, $(x_n)_{n \in \N}$ converges, so that Proposition \ref{prop:description of IP} i) gives $I^-(D) = I^-(w)$ with $w := \lim_n x_n$. Lemma \ref{lem: SC criterion} ii) then yields $I^-(D) = I^-(x)$, which is \eqref{eq:SC}.
\end{proof}

\begin{proposition}[The case of globally hyperbolic spacetimes]
\label{prop: case of glob hyp spacetimes}
Let $(\mathcal M,g)$ be a smooth globally hyperbolic spacetime. Then the directed completion of $(\mathcal M,\leq)$ is $(\mathcal C^+(\mathcal M),\subseteq)$.
\end{proposition}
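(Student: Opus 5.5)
The plan is to apply the second part of Theorem \ref{theorem: equivalence of causal and directed completions}. For that we must check that a smooth globally hyperbolic spacetime $(\mathcal M,g)$, viewed as a Lorentzian pre-length space, satisfies five conditions: it is approximable, past distinguishing, has closed causal pasts, and satisfies \eqref{eq:SC}. For \eqref{eq:SC} we invoke Proposition \ref{prop: forward implies SC}, which additionally needs closed causal futures and forward completeness.

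First, we set up $\mathcal M$ as a Lorentzian pre-length space. We take $\mathsf d$ to be the distance of any complete Riemannian metric inducing the manifold topology; separability is automatic. We take $\ell$ to be the time separation function. By global hyperbolicity $\ell$ is finite and continuous, in particular lower semicontinuous. The reverse triangle inequality and the relations $\ell>0 \iff x\ll y$, $\ell=0$ off $J^+$ are standard.

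Approximability holds because in a smooth spacetime any point is a limit of points along a timelike curve through it, in both directions. Global hyperbolicity implies strong causality, hence past distinction. Global hyperbolicity also implies causal simplicity, so $J^\pm(q)$ are closed for every $q$. This gives both closed causal pasts and closed causal futures.

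The only step needing an argument is forward completeness. Let $x_n\ll x_{n+1}$ with $x_n\le\bar x$ for all $n$. Then every $x_n$ lies in the causal diamond $J^+(x_1)\cap J^-(\bar x)$, which is compact by global hyperbolicity. Hence $(x_n)$ has a subsequence converging to some $x$. To upgrade this to convergence of the whole sequence, we argue as in Proposition \ref{prop:description of IP} i), using closed causal futures and past distinction. Every convergent subsequence has the same limit $x$, characterized by $I^-(x)=\bigcup_n I^-(x_n)$. A sequence in a compact metric set all of whose convergent subsequences share a single limit converges to it. Thus $\mathcal M$ is forward complete.

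With all hypotheses in place, Proposition \ref{prop: forward implies SC} yields \eqref{eq:SC}. Theorem \ref{theorem: equivalence of causal and directed completions} then gives that $(\mathcal C^+(\mathcal M),\subseteq)$ is the directed completion of $(\mathcal M,\preceq_{I^-})$, and, using closed causal pasts, of $(\mathcal M,\le)$.

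I expect the main, if mild, obstacle to be the full-sequence convergence in the forward completeness step, since compactness alone gives only subsequences. The argument above handles it via the uniqueness clause of Proposition \ref{prop:description of IP} i).
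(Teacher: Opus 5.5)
Your proposal is correct and follows the paper's route: check approximability, past distinction, closed causal pasts and futures, and forward completeness, then combine Proposition \ref{prop: forward implies SC} with Theorem \ref{theorem: equivalence of causal and directed completions}. The only difference is that you prove forward completeness explicitly, via compactness of causal diamonds and uniqueness of subsequential limits from the argument of Proposition \ref{prop:description of IP} i), whereas the paper simply cites the literature for it.
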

\begin{proof}
    This follows from our main Theorem \ref{theorem: equivalence of causal and directed completions} in conjunction with Proposition \ref{prop: forward implies SC} upon noticing that if $\mathsf d$ is the Riemannian distance of an arbitrary Riemannian metric on $\mathcal M$, then $(\mathcal M,\sf d, \ll, \leq, \ell)$ is an approximable, past-distinguishing Lorentzian pre-length space with closed causal futures and pasts, and it is easily seen to be forward complete as a consequence of global hyperbolicity (cf.\ \cite[Rem.\ 2.10]{Octet} and also \cite[Thm.\ A.4]{ohanyansalamocandal}).
\end{proof}

\begin{remark}
\begin{enumerate}
\item[]
\item The two assumptions of Proposition \ref{prop: forward implies SC} play different roles. Closed causal futures are used only to pass from convergence of the sequence $(x_n)_{n \in \N}$ to the identity $I^-(D) = I^-(\lim_n x_n)$, i.e., to guarantee that the chronological past does not jump at the limit point. Forward completeness is the substantial assumption, and it is exactly what fails in the presence of an ideal point admitting a least upper bound.
\item Proposition \ref{prop: case of glob hyp spacetimes} holds in fact much more generally in the class of globally hyperbolic Lorentzian length spaces (cf.\ \cite{KunzingerSamann}), which includes globally hyperbolic Finsler spacetimes.
\end{enumerate}
\end{remark}

\section{The completion of Schwarzschild spacetime}\label{sec:Schwarzschild}

\subsection{The main result}

In this section, we study the directed completion of $\krus$ in terms of its radial null geodesics. We recall that, as a consequence of Theorem \ref{theorem: equivalence of causal and directed completions}, this characterization also provides a description of its future causal completion, thereby offering an alternative viewpoint on the description obtained by Geroch--Kronheimer--Penrose in \cite{GeKroPe}. Recall that $\krus$ is globally hyperbolic \cite{GerochDomDep}, thus by Proposition \ref{prop: case of glob hyp spacetimes} it falls within the scope of Theorem \ref{theorem: equivalence of causal and directed completions}, so that its directed completion and its future causal completion may indeed be identified.

Throughout this section, we denote by $\leq$ the causal order in $\krus$. Moreover, we will use the following future directed null curves $\gamma^{U,\Omega},\gamma^{V,\Omega}$ defined by
\begin{equation}\label{eq:def:radial_null_geodesics}
    \gamma^{U,\Omega}(s):=(U,s,\Omega),\qquad \gamma^{V,\Omega}(s):=(s,V,\Omega),
\end{equation}
where $\Omega\in \mathbb{S}^2$ and $U,V\in\R$ are fixed. 
The domain of these curves is considered in such a way that these are inextendible. Moreover, up to a suitable reparameterization, these are radial null geodesics that are globally maximizing, hence globally achronal.

We define the following objects:
\begin{itemize}
    \item The black hole singularity, consisting of $S^+ := (0,+\infty) \times \mathbb{S}^2$, whose generic point is denoted by $[U^*, \Omega^*]$ for some $U^* \in (0,+\infty)$ and $\Omega^* \in \mathbb{S}^2$.
    \item The future timelike infinity, consisting of two points $i^+$ and $i^{+'}$.
    \item The future null infinity, consisting of  the two disjoint tagged copies
    \[
    \mathcal{I}^{+'} := (-\infty,0) \times \mathbb{S}^2\times\{\mathrm L\},
    \qquad
    \mathcal{I}^+ := (-\infty,0) \times \mathbb{S}^2\times\{\mathrm R\} ,
    \]
    where the tags $\mathrm R,\mathrm L$ are introduced only to keep the two copies distinct in the disjoint union below and will be omitted from the notation. The generic point of $\mathcal I^+$ is denoted by $[U^*, \Omega^*]\in \mathcal{I}^+$ for some $U^* \in (-\infty,0)$ and $\Omega^* \in \mathbb{S}^2$, while the generic point of $\mathcal I^{+'}$ is denoted by $[V^*, \Omega^*]\in \mathcal{I}^{+'}$ for some $V^* \in (-\infty,0)$ and $\Omega^* \in \mathbb{S}^2$.
\end{itemize}
We now state the main theorem of this section.
\begin{theorem}\label{main theorem}
    The directed completion of $\krus$ is isomorphic to $(\overline{\mathcal{M}}_{\mathrm{Krus}}, \preceq)$, where
    \[
    \overline{\mathcal{M}}_{\mathrm{Krus}}:= \krus \sqcup S^+ \sqcup \{ i^+\} \sqcup \{ i^{+'}\}\sqcup \mathcal{I}^+\sqcup \mathcal{I}^{+'},
    \]
    and $\preceq$ is defined as follows:
\begin{itemize}
    \item For any $p, q\in \krus$,
    \begin{equation}\label{order1}
        p \leq q \quad \iff\quad  p \preceq q.
    \end{equation}
    \item For any $p \in \{(U,V) : U \in (-\infty,0], V \in \mathbb{R},\, UV <1\}\times \mathbb{S}^2 \cup \mathcal{I}^{+}$,
    \begin{equation}\label{order2}
        p \preceq i^+,\qquad\text{and moreover } i^+\preceq i^+.
    \end{equation}
      \item For any $p \in \{(U,V) : U\in \mathbb{R}, V \in (-\infty,0],\, UV<1\}\times \mathbb{S}^2 \cup \mathcal{I}^{+'}$,
    \begin{equation}\label{order3}
        p \preceq i^{+'},\qquad\text{and moreover } i^{+'}\preceq i^{+'}.
    \end{equation}
    \item Let $[U^*, \Omega^*], [U^*_1, \Omega^*_1], [U^*_2, \Omega^*_2] \in \mathcal{I}^+$ and let $p \in \mathcal M_{\mathrm{Krus}}$. Then 
    \begin{equation}\label{order4}
    \begin{aligned}
        [U^*_1, \Omega^*_1] \preceq [U^*_2, \Omega^*_2]\quad &\iff \quad \Omega^*_1 = \Omega^*_2 \quad\mathrm{and}\quad U^*_1 \leq U^*_2,\\
         p \preceq [U^*, \Omega^*]\quad &\iff \quad p \in J^-\left( \gamma^{U^*,\Omega^* }\right) ,
    \end{aligned}
    \end{equation} 
     \item Let $[V^*, \Omega^*], [V^*_1, \Omega^*_1], [V^*_2, \Omega^*_2] \in \mathcal{I}^{+'}$ and let $p \in \mathcal M_{\mathrm{Krus}}$. Then 
    \begin{equation}\label{order5}
    \begin{aligned}
        [V^*_1, \Omega^*_1] \preceq [V^*_2, \Omega^*_2]\quad &\iff \quad \Omega^*_1 = \Omega^*_2 \quad\mathrm{and}\quad V^*_1 \leq V^*_2,\\
         p \preceq [V^*, \Omega^*]\quad &\iff \quad p \in J^-\left( \gamma^{V^*,\Omega^* }\right),
    \end{aligned}
    \end{equation} 
    \item Let $[U^*, \Omega^*],[U_1^*, \Omega_1^*],[U_2^*, \Omega_2^*] \in S^{+}$ and let $p \in \mathcal M_{\mathrm{Krus}}$. Then 
    \begin{equation}\label{order6}
    \begin{aligned}
        [U_1^*, \Omega_1^*]\preceq [U_2^*, \Omega_2^*]\quad&\iff\quad
\Omega^*_1=\Omega^*_2\quad \mathrm{and}\quad U^*_1= U^*_2,\\
        p \preceq [U^*, \Omega^*]\quad&\iff \quad  p \in \widehat {J^-}\left( \gamma^{U^*,\Omega^* }\right) .
        \end{aligned}
    \end{equation}
\end{itemize}
\end{theorem}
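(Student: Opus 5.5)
By Proposition~\ref{prop: case of glob hyp spacetimes} (using that $\krus$ is globally hyperbolic), the directed completion of $(\krus,\leq)$ is $(\mathcal C^+(\krus),\subseteq)$, with $\krus$ embedded by $p\mapsto I^-(p)$. The plan is therefore to classify all indecomposable past sets of $\krus$ and to compute inclusions among them. We then define an explicit bijection $\Phi:\overline{\mathcal M}_{\mathrm{Krus}}\to\mathcal C^+(\krus)$ and check that $a\preceq b$ if and only if $\Phi(a)\subseteq\Phi(b)$. The map $\Phi$ is as follows:
\begin{itemize}
\item $\Phi(p)=I^-(p)$ for $p\in\krus$;
\item $\Phi([U^*,\Omega^*])=I^-(\gamma^{U^*,\Omega^*})$ on $\mathcal I^+$ and on $S^+$;
\item $\Phi([V^*,\Omega^*])=I^-(\gamma^{V^*,\Omega^*})$ on $\mathcal I^{+'}$;
\item $\Phi(i^+)=\{U<0\}$ and $\Phi(i^{+'})=\{V<0\}$.
\end{itemize}
Each of these sets is indecomposable, because it is either $I^-$ of a point or an increasing union of pasts of points along a future-directed curve, hence a $\subseteq$-directed union of PIPs; Lemma~\ref{lemma: causal boundary directed complete} then applies. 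For instance $\{U<0\}=\bigcup_n I^-(-1/n,n,\Omega_0)$, since the $\mathbb S^2$ factor is absorbed as $V\to\infty$ at fixed $U<0$.

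For surjectivity, take a TIP $P$. By Proposition~\ref{prop:description of IP} we have $P=I^-(\gamma)$ for a future-inextendible timelike curve $\gamma$. Since $U,V$ are monotone along $\gamma$, they have limits $U^*,V^*\in(-\infty,+\infty]$. Inextendibility combined with global hyperbolicity forces $\gamma$ to leave every compact set, so either $U^*V^*=1$ (the curve reaches $r=0$) or $U^*=+\infty$ or $V^*=+\infty$. We split into cases.
\begin{itemize}
\item If $U^*,V^*$ are both finite, then $U^*>0$ and the curve approaches the singularity at $[U^*,\Omega^*]$.
\item If $V^*=+\infty$ and $U^*<0$, the curve escapes to $\mathcal I^+$.
\item If $V^*=+\infty$ and $U^*=0$, the curve tends to $i^+$.
\item If both are infinite, this is impossible, since it would force $UV\ge1$.
\end{itemize}
The symmetric cases follow by applying the isometry $\mathcal T$ from \eqref{eq:symmetric_map_UV}. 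In each case we must identify $I^-(\gamma)$ with the corresponding model set, and this needs a limit angle $\Omega^*$. Whenever $V\to\infty$ with $U^*<0$, or $UV\to1$, the angular motion is controlled by the metric \eqref{Kruskal}: the conformal factor is bounded below while $r$ is bounded below (respectively, in the black hole the angular speed is bounded by the shrinking room), so $\Omega(\gamma)$ converges. When $U^*=0$, no angle survives and we obtain $\{U<0\}$. The core computation is $I^-(\gamma)=I^-(\gamma^{U^*,\Omega^*})$. To prove it, for $q\ll\gamma(t)$ we push up to a point on $\gamma^{U^*,\Omega^*}$ with larger $V$. For this we use that, at large $V$ (respectively near $r=0$), the cone of $q$ contains points of every angle, because null geodesics can traverse the sphere cheaply in $U$ when $r$ is large or when the approach to the singularity is compressed. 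The converse inclusion is proved symmetrically.

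Injectivity, and the order relations \eqref{order1}--\eqref{order6}, come from explicit descriptions of these pasts.
\begin{itemize}
\item For $\mathcal I^+$: $I^-(\gamma^{U^*,\Omega^*})\subseteq I^-(\gamma^{U_2^*,\Omega_2^*})$ if and only if $\Omega^*$ equals $\Omega_2^*$ and $U^*\le U_2^*$. For distinct angles, points with $U$ close to $U^*$ and angle $\Omega^*$ are not in the past of the other ray, which is proved by an achronality / maximizing argument for radial null geodesics.
\item Containment in $\{U<0\}$ is clear, and \eqref{order2}--\eqref{order3} follow.
\item On $S^+$ the sets are pairwise incomparable. This is because the black-hole singularity is spacelike, and a point near $[U^*,\Omega^*]$ lies in the past of no other endpoint.
\end{itemize}
Identifying $\Phi^{-1}$ of inclusion with the relation $p\preceq[\ldots]$ as written in terms of $J^-(\gamma)$ and $\widehat{J^-}(\gamma)$ requires comparing $I^-(p)\subseteq I^-(\gamma)$ with $p\in J^-(\gamma)$, respectively with $p$ in the lower directed-sup-closed hull. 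In particular, a point with $I^-(p)\subseteq I^-(\gamma)$ but lying on the boundary of the past, such as a point of the horizon, must be captured by the hat operation, and we will verify this directly.

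The main obstacle is the angular analysis: showing that $I^-(\gamma)$ depends only on $(U^*,\Omega^*)$, and that distinct angles yield distinct, incomparable pasts. I would attempt this via explicit comparison curves in the $(U,V)$-plane, using the estimate that angular displacement costs $\int r^{-1}\sqrt{\text{(factor)}\,dU\,dV}$. This displacement cost tends to zero along the tail of $\gamma$ in the relevant regimes, while it stays positive at fixed separation for the incomparability claims.
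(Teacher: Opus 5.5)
\paragraph{Different route, and where it breaks.} Your route differs from the paper's. You invoke Proposition~\ref{prop: case of glob hyp spacetimes} and classify indecomposable past sets via Proposition~\ref{prop:description of IP} plus an imprisonment argument, so directed completeness comes for free from Lemma~\ref{lemma: causal boundary directed complete}. The paper instead classifies the sets $\widehat D$ and proves \eqref{eq:first_step_suffices} by hand. That part of your plan is sound.

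The genuine gap is the step you postpone with ``we will verify this directly'', and it cannot be carried out. Take $\gamma=\gamma^{U^*,\Omega^*}$ with $[U^*,\Omega^*]\in\mathcal I^+$. The equivalence $I^-(p)\subseteq I^-(\gamma)\iff p\in J^-(\gamma)$ is false.
\begin{itemize}
\item $I^-(p)\subseteq I^-(\gamma)$ holds iff $p\in\overline{I^-(\gamma)}$, by approximability and openness of $I^+(x)$ for $x\ll p$.
\item On the other hand, $J^-(\gamma)=I^-(\gamma)\cup\gamma$. If $q\le\gamma(s)$ but $q\not\ll\gamma(s)$, the connecting curve is a null geodesic. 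Unless it is a piece of $\gamma$, continuing along $\gamma$ gives a broken causal curve, hence $q\ll\gamma(s+1)$.
\item $\partial I^-(\gamma)$ is a nonempty closed achronal topological hypersurface, so it is not contained in the curve $\gamma$. Hence there are points $p\in\overline{I^-(\gamma)}\setminus J^-(\gamma)$. This is exactly as in Minkowski space, where the past of a null line is an open half-space whose boundary hyperplane meets the causal past of the line only along the line.
\end{itemize}
For such $p$, your $\Phi$ gives $\Phi(p)\subseteq\Phi([U^*,\Omega^*])$, while \eqref{order4} declares $p\not\preceq[U^*,\Omega^*]$.

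Worse, take an increasing sequence in $I^-(p)$ converging to $p$. It has both $p$ and $[U^*,\Omega^*]$ as upper bounds in $(\overline{\mathcal M}_{\mathrm{Krus}},\preceq)$, but no least one, since nothing at the boundary lies below a point of $\krus$. So with \eqref{order4}--\eqref{order5} read literally, the poset is not even directed complete.

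What your approach actually proves is the statement with $J^-(\gamma^{U^*,\Omega^*})$ replaced by $\overline{I^-(\gamma^{U^*,\Omega^*})}=\widehat{J^-}(\gamma^{U^*,\Omega^*})$ in the second lines of \eqref{order4} and \eqref{order5}, just as in \eqref{order6}. The paper's own proof has the same defect. Lemma~\ref{lemma:future_null_angle} asserts that $J^-(\gamma^{U^*,\Omega^*})$ is directed-sup-closed ``since $V^*=+\infty$'', and the argument above refutes this.

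\paragraph{The angular mechanism is backwards.} The mechanism you propose for $I^-(\gamma)=I^-(\gamma^{U^*,\Omega^*})$ has it the wrong way round: angular motion is expensive, not cheap, both at large $r$ and near $r=0$.
\begin{itemize}
\item By \eqref{estimate of angle}, near $\mathcal I^+$ the angle gained while $U$ increases from $U(p_1)$ to $U^*$ is at most $2M(1+4M^{-2})\log(U(p_1)/U^*)+2/r(p_1)$.
\item By \eqref{estimate of omega}, only $O(\sqrt r)$ of angle can be gained below radius $r$ in the black hole.
\end{itemize}
This is precisely why limit angles exist and distinct angles give incomparable pasts. Achronality of radial null geodesics alone does not give this incomparability when $U_1^*<U_2^*$; cf.\ Corollary~\ref{cor:future_null_order}. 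It also contradicts your claim that ``the cone of $q$ contains points of every angle''.

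The identity is nevertheless true, but the slack must be spent near the starting point, using isometries.
\begin{itemize}
\item Let $\Psi$ be a boost $(U,V)\mapsto(\lambda U,\lambda^{-1}V)$ (the flow of $\partial_t$) that increases $U$, composed with a rotation of $\mathbb S^2$, with $\Psi$ close to the identity.
\item The tail of $\gamma$ has $U<U^*$, $U\to U^*$ and $\Omega\to\Omega^*$. So $\Psi$ can be chosen so that $\Psi(\text{tail of }\gamma)$ crosses $\{U=U^*\}$ exactly at a point $\gamma^{U^*,\Omega^*}(s')$, with $s'$ as large as desired.
\item Given $q\ll\gamma(t_0)$, openness of $I^+(q)$ yields $q\ll\Psi(\gamma(t_0))\le\gamma^{U^*,\Omega^*}(s')$.
\item Given $q\ll\gamma^{U^*,\Omega^*}(s)$, it yields $q\ll\Psi^{-1}(\gamma^{U^*,\Omega^*}(s))\le\gamma(t)$.
\end{itemize}
This handles both inclusions, at $\mathcal I^+$ and at $S^+$ alike.
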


No comparison other than those listed in \eqref{order1}--\eqref{order6} and those obtained from them by reflexivity and transitivity holds in $(\overline{\mathcal M}_{\mathrm{Krus}},\preceq)$. In particular, the relations \eqref{order1}--\eqref{order6} define a partial order: reflexivity holds by \eqref{order1} on $\krus$, by the second parts of \eqref{order2} and \eqref{order3} at $i^+$ and $i^{+'}$, and by the first identities in \eqref{order4}--\eqref{order6} on $\mathcal I^+$, $\mathcal I^{+'}$ and $S^+$; antisymmetry and transitivity will also follow from Theorem \ref{main theorem}, the map $T$ constructed in its proof being a bijection onto $(\mathrm Y,\subseteq)$ which turns $\preceq$ into the inclusion.

The rest of the section is devoted to the demonstration of Theorem \ref{main theorem}. We conclude this subsection by discussing the strategy of the proof. 

Let $D \subseteq \krus$ be a directed set. Since $U$ and $V$ are monotone, their images are directed subsets of the extended real line; hence
\begin{equation}\label{eq:existence_limit_UV}
    U^*:=\sup_{p\in D} U(p)\in \mathbb{R}\cup\{+\infty\},\qquad  V^*:=\sup_{p\in D} V(p)\in \mathbb{R}\cup\{+\infty\}.
\end{equation}
We define the following partially ordered set
\begin{equation}\label{eq:def:Y}
    \mathrm Y:=\big\{\widehat{D}: D\subseteq \mathcal M_{\mathrm{Krus}}\text{ directed}\big\}
\end{equation}
endowed with the inclusion.
The proof of Theorem \ref{main theorem} is now divided into two parts.

First, in Subsection \ref{subsection: characterization of directed sets}, we show that the elements of $\mathrm{Y}$ are of the form $\widehat{J^-}(\gamma)$, where $\gamma$ is a suitable radial null geodesic, see Proposition \ref{prop:structure_Dhat_general}. This is achieved by exploiting a case-by-case analysis in dependence of $U^*, V^* \in \mathbb{R}\cup\{+\infty\}$. 

After that, in Subsection \ref{subsection: The structure of the completion}, we prove that $(\mathrm Y,\subseteq)$ is directed complete and it is isomorphic to the directed completion of $\krus$.

Let us explain why proving that $(\mathrm Y,\subseteq)$ is directed complete is enough to identify it with the directed completion of $\krus$. In the construction of the directed completion given in \cite[Theorem 2.5]{gigli2025hyperbolicbanachspaces}, one embeds $\krus$ into the partial order of all lower and directed-sup-closed subsets by $\iota(p):=J^-(p)$ and then adds directed suprema by transfinite recursion; the outcome of the first step of the recursion is precisely
\[
\iota(\krus)^{\uparrow}=\big\{\widehat D : D\subseteq\krus\text{ directed}\big\}=\mathrm Y,
\]
and the supremum of a family in the ambient order is the operation $\{A_i\}\mapsto\widehat{\bigcup_i A_i}$. Hence, if the second step of the recursion produces nothing new, that is if
\begin{equation}\label{eq:first_step_suffices}
    \widehat{\bigcup_{i\in I}\widehat{D_i}}\in\mathrm Y
    \qquad\text{for every directed family }\{\widehat{D_i}\}_{i\in I}\subseteq\mathrm Y,
\end{equation}
then the recursion stabilizes at the first step and $\mathrm Y$, together with $\iota$, is the directed completion of $(\krus,\leq)$. This is exactly the strategy followed for the Minkowski spacetime in \cite[Example 14]{gigli2025hyperbolicbanachspaces}. We shall therefore prove Lemma \ref{lemma:Y_directed_complete} in the sharper form \eqref{eq:first_step_suffices}.

Let us stress that the general criterion of \cite[Proposition 2.10]{gigli2025hyperbolicbanachspaces} is not applicable here: it requires $\widehat D$ to be directed whenever $D$ is, and this fails in $\krus$. Indeed, by Lemma \ref{lemma:timelike_infty} the set $\{U\leq 0\}$ belongs to $\mathrm Y$, while two distinct points of the horizon $\{U=0\}$ with different angular coordinates admit no common upper bound within $\{U\leq0\}$.

\subsection{The characterization of directed sets} \label{subsection: characterization of directed sets}

In this subsection we carry out the first part of our plan. Namely, we characterize the directed sets in terms of radial null curves. In the following proposition we collect all the possibilities.

\begin{proposition}\label{prop:structure_Dhat_general}
    Let $D\subseteq\krus$ be a directed set. Then, there exists $\Omega^*\in \mathbb{S}^2$ such that
    \begin{equation}
        \widehat{D}=\widehat{J^-}(\gamma^{U^*,V^*,\Omega^*}),
    \end{equation}
    where $U^*,V^*$ are as in \eqref{eq:existence_limit_UV} and
    \begin{equation*}
       \widehat{J^-}(\gamma^{U^*,V^*,\Omega^*}):=\begin{cases}
            \widehat{J^-}\big(\{\gamma^{U^*,\Omega^*}(s):s\in (1/U^*,V^*)\}\big),&\quad\mathrm{if }\,U^*\in(-\infty,0), V^*\in(-\infty,+\infty],\\
           \widehat{J^-}\big(\{\gamma^{U^*,\Omega^*}(s): s\in (-\infty,V^*)\}\big),&\quad\mathrm{if }\,U^*\in[0,+\infty),V^*\in(-\infty,+\infty],\\
           \widehat{J^-}\big(\{\gamma^{V^*,\Omega^*}(s): s\in (1/V^*,+\infty)\}\big),&\quad\mathrm{if }\,U^*=+\infty,V^*\in(-\infty,0],
        \end{cases}
    \end{equation*}
     with the convention $1/V^*=-\infty$ if $V^*=0$.
\end{proposition}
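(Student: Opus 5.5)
Since $\widehat{\cdot}$ is monotone and idempotent, and $\widehat{A}$ contains $J^-(A)$ for every $A$, the identity $\widehat D=\widehat{J^-}(\Gamma)$, with $\Gamma$ the relevant segment of radial null geodesic, reduces to two set-theoretic facts. First, $D\subseteq\widehat{J^-}(\Gamma)$. Second, $\Gamma\subseteq\widehat D$. Given both, $\widehat D\subseteq\widehat{J^-}(\Gamma)$ and $\widehat{J^-}(\Gamma)=\widehat{J^-(\Gamma)}\subseteq\widehat D$, where we use that $\widehat D$ is a lower set containing $\Gamma$. The plan is to prove these two inclusions case by case according to the values of $U^*,V^*$ from \eqref{eq:existence_limit_UV}. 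Throughout we use that $U$ and $V$ are nondecreasing along causal curves.

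The tool for the first inclusion is an explicit description of causal pasts in Kruskal coordinates. A point $p=(U,V,\Omega)$ lies in $J^-(\gamma^{U^*,\Omega^*}(s))$ as soon as $U\le U^*$, $V\le s$ and the angular separation $\angle(\Omega,\Omega^*)$ is small compared with the available coordinate time. Here the warped factor $r^2$ together with the conformal factor of \eqref{Kruskal} controls how much angle can be traversed. Take $p\in D$. In the case $U^*<0$ (or $U^*\ge 0$ with $V^*$ finite or infinite) we have $U(p)\le U^*$. Moreover, for points of $D$ in the future of $p$, the coordinate $V$ increases up to $V^*$. So $p\le q\in D$, and we want to show that $q$, and hence $p$, lies below some $\gamma^{U^*,\Omega^*}(s)$ with $s<V^*$. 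When $U(q)<U^*$ strictly, moving along $\{V=\mathrm{const}\}$ and then along increasing $V$ gives room to change the angle, so $p\in J^-(\Gamma)$ for any $\Omega^*$ once $s$ is taken close to $V^*$.

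This is where the choice of $\Omega^*$ enters, and it is the main obstacle: a single $\Omega^*$ must work for all of $D$. I would define $\Omega^*$ as a limit point of $\Omega(q)$ over $q\in D$, using directedness together with compactness of $\mathbb S^2$. I would then show that whenever $U^*$ or $V^*$ is finite and attained only in the limit, the angles of elements of $D$ must converge. The reason is that two points near the same null hypersurface $\{U=U^*\}$ with different angles have no common future point with $U\le U^*$. This is the mechanism noted at the end of the previous subsection, and it should come from the strict achronality of the radial null geodesics, i.e. their global maximality. In the case $U^*=+\infty$, $V^*\le 0$, the same argument applies with the roles exchanged via the isometry $\mathcal T$ of \eqref{eq:symmetric_map_UV}. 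The degenerate case $U^*=V^*=+\infty$ is excluded, since $UV<1$ would give an upper bound otherwise; I would check this against the case list, which treats $V^*=+\infty$ only with $U^*$ finite. When $U^*$ is not needed as a constraint, for instance when $U^*\ge0$ and one reaches the black hole or the horizon, the angular freedom makes the choice of $\Omega^*$ irrelevant up to the hat closure. I would verify this separately.

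For the second inclusion, fix $\gamma^{U^*,\Omega^*}(s)$ with $s$ in the stated range. I would choose $q_n\in D$ with $U(q_n)\uparrow U^*$, $V(q_n)\uparrow V^*$ and $\Omega(q_n)\to\Omega^*$. Then $\gamma^{U^*,\Omega^*}(s)$ is the supremum of a directed set of points lying in $J^-(\{q_n\})$. Concretely, I would take points $(U(q_n)-\epsilon_n, s,\Omega_n)\le q_n$, built as an increasing chain converging to $\gamma(s)$, and identify its supremum using closedness of causal pasts in the globally hyperbolic $\krus$ (Lemma \ref{lemma: comparing chron inclusion and causal relation}). Directed-sup-closedness of $\widehat D$ then gives $\gamma(s)\in\widehat D$. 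For the lower endpoints $1/U^*$ and $-\infty$ nothing is needed beyond lower-closedness. I expect the supremum computation here to be routine, while the uniform choice of $\Omega^*$ in the preceding paragraph is the delicate point.
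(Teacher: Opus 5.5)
Your reduction to the two inclusions $D\subseteq\widehat{J^-}(\Gamma)$ and $\Gamma\subseteq\widehat D$, treated case by case in $(U^*,V^*)$, follows the same skeleton as the paper. The problem is the step you yourself flag as delicate, the convergence of $\Omega$ along $D$: this is where essentially all of the paper's proof lives, and your proposal does not supply it. The mechanism you invoke (points near $\{U=U^*\}$ with different angles have no common future in $\{U\le U^*\}$, ``from achronality of radial null geodesics'') is a qualitative fact about points \emph{on} the null hypersurface. A Cauchy net needs a uniform bound on the angle gained by causal curves between points of a tail of $D$.

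For $U^*,V^*$ finite with $U^*V^*<1$ this bound can be obtained softly. A tail of $D$ lies in a compact coordinate box, and every cluster point of the net is an upper bound of $D$ by closedness of $J^{\pm}$, so any two cluster points satisfy $x\le y\le x$. In the two noncompact cases, however, there is no way around computing with \eqref{Kruskal}:
\begin{itemize}
\item For $V^*=+\infty$ one has $r\to\infty$ and the exterior bound $|\dot\Omega|\le C_M(\dot U+\dot V)$ is useless. The paper needs the weighted estimate \eqref{Omega es} together with the exact integration of $\dot r/(r(r-2M))$ to reach \eqref{estimate of angle}.
\item For $U^*V^*=1$ one has $r\to0$, where the angular speed permitted by the metric blows up. Convergence comes from $|\dot\Omega|^2\le\dot r^2/(r(2M-r))$ and the integrability of $(r(2M-r))^{-1/2}$ at $r=0$.
\end{itemize}
Achronality of a single radial null geodesic gives no such uniformity.

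Your case bookkeeping also goes wrong in both directions.
\begin{itemize}
\item The choice of $\Omega^*$ is irrelevant only at $i^+$ ($U^*=0$, $V^*=+\infty$, Lemma \ref{lemma:timelike_infty}). For finite points on the horizon or in the black hole, $\widehat D=J^-(p^*)$ depends on $\Omega^*$. On $S^+$, distinct limit angles give incomparable elements of $\mathrm Y$ (Corollary \ref{corol:singularity-order-structure}). So ``angular freedom'' near the black hole is false, and convergence must be proved there.
\item Conversely, angles need not converge whenever $U^*$ is finite. An observer at fixed $r_0>2M$ rotating with small constant angular speed is a timelike chain with $U\to0$, $V\to+\infty$ and non-convergent $\Omega$.
\item Your claim ``$p\in J^-(\Gamma)$ for any $\Omega^*$'' is false. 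Along $\{V=\mathrm{const}\}$ or $\{U=\mathrm{const}\}$ a causal curve cannot change its angle at all. The angle it can gain between a point $q\in D$ and $\Gamma$ shrinks to zero as $q$ moves deeper into $D$, which is exactly why $\Omega^*$ is forced rather than free.
\item The first inclusion cannot be proved in the form $D\subseteq J^-(\Gamma)$ with $s<V^*$. It already fails if $D$ has a maximum $p^*$, since $V(p^*)=V^*$. You must use directed-sup-closedness: e.g.\ $p^*=\sup\Gamma$, or write each $p\in D$ as the supremum of points of $I^-(p)$ lying in $J^-(\Gamma)$, as in the proof of Lemma \ref{lemma:singularity-angle}.
\item In your second inclusion, the chain $(U(q_n)-\epsilon_n,s,\Omega_n)$ at fixed $V=s$ can only be increasing if $\Omega_n\equiv\Omega^*$. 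You then need $\epsilon_n\to0$ slowly compared with $|\Omega(q_n)-\Omega^*|$, which again presupposes the missing angular convergence.
\end{itemize}
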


Proposition \ref{prop:structure_Dhat_general} follows by a case-by-case analysis and we split the proof into the following cases:
\begin{itemize}
    \item in Subsubsection \ref{subsection:limit_outside}, we consider directed sets admitting a supremum in $\krus$, 
    \item in Subsubsection \ref{subsection:limit_infinity}, we consider directed sets diverging to future null infinity $\mathcal{I}^{+} \cup \mathcal{I}^{+'}$,
    \item in Subsubsection \ref{subsection:limit_timelike}, we consider the directed sets diverging to future timelike infinity $\{i^{+}\} \cup \{i^{+'}\}$,
    \item finally, in Subsubsection \ref{subsection:limit_inside}, we consider directed sets that end in the black hole singularity $S^+$.
\end{itemize}

\subsubsection{Points in the spacetime }\label{subsection:limit_outside}

We start our case-by-case analysis with the situation in which the directed set $D \subseteq \krus$ admits a supremum in $\krus$.

\begin{lemma}\label{lemma:directed_set_tip_interiorM}
    Let $D\subseteq \mathcal M_{\mathrm{Krus}}$ be a directed set, let $U^*,V^*\in\R$ be defined as in \eqref{eq:existence_limit_UV} and assume one of the following \begin{enumerate}
        \item[i)] $U^*\leq 0<V^*$,
        \item[ii)] $V^*\leq 0<U^*$,
        \item[iii)] $U^*>0$, $V^*>0$ and $U^*V^*<1$,
        \item[iv)] $U^*\leq 0$, $V^*\leq 0$ and $U^*V^*<1$.
    \end{enumerate}
    Then there exists $\Omega^*\in \mathbb{S}^2$ such that $\lim_{p\in D}\Omega(p)=\Omega^*$. Let $p^* \in \krus$ be the point identified with the coordinates $(U^*,V^*,\Omega^*)$. Then, we have that $p^*=\sup D$ and that
\begin{equation}\label{eq:point_is_sup:curve}
        \widehat D=J^-(p^*)=
            \widehat{J^-}\left (\{\gamma^{U^*,\Omega^*}(s):s \leq V^*)\}\right) \, .
    \end{equation}
\end{lemma}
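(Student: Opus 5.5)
The plan is to use the limit point $p^*$ and the closedness of causal pasts and futures in the globally hyperbolic spacetime $\krus$. The one real obstacle is showing that the angular coordinate converges along $D$.

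\emph{Step 1: convergence of $\Omega$.} Write $U_p:=U(p)$ and $V_p:=V(p)$. Under each of the hypotheses i)--iv), the closed rectangle $R_p:=[U_p,U^*]\times[V_p,V^*]$ lies in $\{UV<1\}$ for every $p\in D$. In cases i), ii) and iv) this holds because $UV\le 0$ on $R_p$, or because the product on $R_p$ is maximized at the corner $(U_p,V_p)$, which is a point of the spacetime. In case iii) the maximum is attained at $(U^*,V^*)$, and $U^*V^*<1$ by assumption. Fix one $p_0\in D$. By \eqref{relation uv}, $r$ is bounded below by some $r_0>0$ on the compact set $R_{p_0}$, and the conformal factor $\frac{32M^3}{r}e^{-r/2M}$ is bounded above on it by some $c$.

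Let $\sigma$ be a future causal curve from $q$ to $z$ with $p_0\le q\le z\le$ (some element of $D$). Since $U$ and $V$ are monotone along $\sigma$, the curve stays in $R_{p_0}$. Causality of \eqref{Kruskal} gives $r^2|\dot\Omega|^2\le c\,\dot U\dot V\le \tfrac c4(\dot U+\dot V)^2$. Integrating yields
\[
\mathrm{dist}_{\mathbb S^2}\big(\Omega(q),\Omega(z)\big)\le C\big(U(z)-U(q)+V(z)-V(q)\big).
\]
Now take $p,q\in D$ above $p_0$ and a common upper bound $z\in D$. The estimate gives $\mathrm{dist}(\Omega(p),\Omega(q))\le C\big(2U^*-U_p-U_q+2V^*-V_p-V_q\big)$, which tends to $0$ along the directed set. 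Hence $(\Omega(p))_{p\in D}$ is a Cauchy net on the compact sphere, and it converges to some $\Omega^*$. Consequently $p\to p^*=(U^*,V^*,\Omega^*)$ as a net in $\krus$.

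\emph{Step 2: $p^*=\sup D$.} Fix $p\in D$. Elements $q\in D$ with $q\ge p$ are cofinal, so $q\in J^+(p)$ eventually and $q\to p^*$. Since $J^+(p)$ is closed, $p\le p^*$, so $p^*$ is an upper bound. If $w$ is any upper bound of $D$, then $D\subseteq J^-(w)$, which is closed, so $p^*\le w$. Thus $p^*=\sup D$.

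\emph{Step 3: identification of $\widehat D$.} Since $\widehat D$ is directed-sup-closed and lower, we get $p^*\in\widehat D$ and hence $J^-(p^*)\subseteq\widehat D$. Conversely, $J^-(p^*)$ is a lower set containing $D$. It is also directed-sup-closed: if $E\subseteq J^-(p^*)$ is directed with a supremum, then $p^*$ is an upper bound of $E$, so $\sup E\le p^*$. Hence $\widehat D=J^-(p^*)$. For the second equality, apply the same reasoning to the chain $\Gamma:=\{\gamma^{U^*,\Omega^*}(s): s\le V^*\}$, where $s$ is restricted to the domain of the curve. $\Gamma$ is directed and has maximum $\gamma^{U^*,\Omega^*}(V^*)=p^*$, so $\widehat{J^-}(\Gamma)=J^-(p^*)$ directly.

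I expect Step 1 to be the main point. The subtle part is making sure the connecting causal curves stay in a region where $r$ is bounded away from $0$; this is exactly where the hypotheses i)--iv) (namely $U^*V^*<1$) are used.
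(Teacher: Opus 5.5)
Your proof is correct. Steps 2 and 3 match the paper's argument: closedness of $J^\pm(p)$ gives $p^*=\sup D$, and minimality of $\widehat D$ gives $\widehat D=J^-(p^*)$. You also make explicit the second equality via the maximum of the chain, which the paper leaves implicit.

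The genuine difference is in Step 1.

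\emph{The paper's route.} It splits according to region.
\begin{enumerate}
\item[i)] In case i) it uses exactly your bound $|\dot\Omega|\le C_M(\dot U+\dot V)$, with $r\ge 2M$ controlling the conformal factor.
\item[ii)] In cases iii) and iv) it differentiates \eqref{relation uv} to obtain $|\dot\Omega|^2\le \dot r^2/(r(2M-r))$. It integrates this in $r$, using the integrability of $1/\sqrt{r(2M-r)}$, after a cofinal reduction to the black or white hole region.
\item[iii)] It needs a separate approximation argument, shifting points by $(-\tfrac1n,-\tfrac1n)$, for nets touching the horizons in case iv).
\end{enumerate}

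\emph{Your route.} You observe that $U^*V^*<1$ forces every connecting causal curve to stay in the compact coordinate rectangle $R_{p_0}\subseteq\{UV<1\}$, on which $r\ge r_0>0$. The elementary Lipschitz bound then works uniformly in all four cases. It crosses horizons freely and needs neither the region-by-region reduction nor the horizon approximation, so it is shorter and more robust.

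\emph{What the paper's route buys.} The $r$-integral estimate does not degenerate as $r\to 0$. That is exactly what is reused in Lemma \ref{lemma:singularity-angle} and Corollary \ref{corol:singularity-order-structure}, where $U^*V^*=1$ and your constant $C\sim 1/r_0$ blows up.

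\emph{One small inaccuracy.} In case iii) the maximum of $UV$ on $R_{p_0}$ need not be attained at $(U^*,V^*)$. If $p_0$ lies in the white hole, $U_{p_0}V_{p_0}$ can exceed $U^*V^*$. The conclusion is unaffected, since $UV$ is bilinear and so attains its maximum on the rectangle at a corner. All four corners satisfy $UV<1$: the mixed corners give either a nonpositive product or one bounded by $U^*V^*$. Alternatively, simply choose $p_0$ with $U_{p_0},V_{p_0}>0$.
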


\begin{proof}
We prove $i)$, case $ii)$ follows from the symmetry \eqref{eq:symmetric_map_UV}.
Let $p_1\leq p_2$ be points in $A:=\{U\leq 0\,,\,V>0\}$
and let $\gamma :[0,1]\to A$ be a smooth future directed causal curve such that $\gamma_0=p_1$ and $\gamma_1=p_2$.

Along $\gamma_s$ we have $\dot U_s,\dot V_s\geq0$ and $r_s\geq2M$ and, by causality, we have $r_s^2|\dot\Omega_s|^2
    \leq
    \frac{32M^3}{r_s}e^{-\frac{r_s}{2M}}\dot U_s\dot V_s.$
Since $r_s\geq2M$, it follows that
\begin{equation} \label{eq: est omeg}
    |\dot\Omega_s|
    \leq C_M\sqrt{\dot U_s\dot V_s}
    \leq \frac{C_M}{2}(\dot U_s+\dot V_s),
\end{equation}
for some constant $C_M>0$ depending only on $M$. Hence
\[
    \operatorname{dist}_{\mathbb{S}^2}\bigl(\Omega(p_1),\Omega(p_2)\bigr)
    \leq  \int_0^1 |\dot\Omega_s|\,\d s\leq
    \frac{C_M}{2}
    \bigl(U(p_2)-U(p_1)+V(p_2)-V(p_1)\bigr).
\]
Therefore $\Omega(p)$ is a Cauchy net in $\mathbb{S}^2$, and so the conclusion follows.


We now treat the cases $iii)$ and $iv)$, in which the relevant estimate is the one available inside the horizons. We first record that, if $D'\subseteq D$ is cofinal, then $U^*,V^*$ are unchanged and $\widehat{D'}=\widehat D$: indeed $\widehat{D'}\subseteq\widehat D$, while every $p\in D$ lies below some $q\in D'$ and $\widehat{D'}$ is a lower set, so that $D\subseteq\widehat{D'}$ and hence $\widehat D\subseteq\widehat{D'}$. We may therefore always replace $D$ by a cofinal subset.

\emph{Step 1: the case $r<2M$.}
Assume that $D$ is contained either in the black-hole region
$\{U>0,V>0\}$ or in the white-hole region $\{U<0,V<0\}$. Let
$p_1\le p_2$ be points of $D$ and let
$\gamma:[0,1]\to\mathcal M_{\rm Krus}$ be a future-directed causal
curve joining them, with $r_s:=r(\gamma_s)\in(0,2M)$. Along $\gamma$,
$r$ is nonincreasing in the black-hole region and nondecreasing in the
white-hole region. Taking logarithms in \eqref{relation uv} and differentiating gives
\[
\frac{\dot U_s}{U_s}+\frac{\dot V_s}{V_s}
=\frac{r_s}{2M(r_s-2M)}\dot r_s.
\]
Combining this identity with causality and
$ab\le (a+b)^2/4$ yields
\[
|\dot\Omega_s|^2
\le \frac{\dot r_s^2}{r_s(2M-r_s)}.
\]
Consequently,
\[
\operatorname{dist}_{S^2}\bigl(\Omega(p_1),\Omega(p_2)\bigr)
\le
\left|
\int_{r(p_1)}^{r(p_2)}
\frac{dr}{\sqrt{r(2M-r)}}
\right|.
\]
In either region, $r$ is monotone on $D$ and converges to the value
$r^*\in[0,2M)$ determined by $U^*V^*$ through \eqref{relation uv}. Since the
integrand is integrable on $(0,2M)$, the right-hand side tends to zero
along the net. Hence $\Omega$ is a Cauchy net in $\mathbb{S}^2$, and
$\Omega^*:=\lim_{p\in D}\Omega(p)$ exists.

\emph{Step 2: reduction to Step 1.} In case $iii)$, we have $U^*, V^*>0$, so by directedness there is $p_0\in D$ with $U(p_0)>0$ and $V(p_0)>0$; replacing $D$ by the cofinal subset $\{p\in D: p\geq p_0\}$ we may assume $U(p)>0$ and $V(p)>0$, hence $r(p)<2M$, for every $p\in D$, and Step 1 applies. The same argument applies in case $iv)$ whenever $U^*<0$ and $V^*<0$, the roles of the black hole region being played by the white hole region $\{U<0,V<0\}$.

\emph{Step 3: the horizons.} There remains the case $U^*V^*=0$ of $iv)$, in which points of $D$ may lie on the horizons $\{r=2M\}$ and the estimate of Step 1 degenerates. We argue by approximation from below. Let $p_1\leq p_2$ be points of $D$ and, for $n\in\N$ large enough, let $p_i^{(n)}$ be the point of coordinates $\bigl(U(p_i)-\tfrac1n,V(p_i)-\tfrac1n,\Omega(p_i)\bigr)$. Such a shift maps the region $\{U\leq0,V\leq0\}$ into $\{U<0,V<0\}\subseteq\{r<2M\}$ and is order preserving there: it leaves $\dot U,\dot V,\dot\Omega$ unchanged along a causal curve while increasing $UV$, hence decreasing $r$ by \eqref{relation uv}, and the causality condition $\tfrac{32M^3}{r}e^{-r/2M}\dot U\dot V\geq r^2|\dot\Omega|^2$ is preserved because its left hand side is decreasing and its right hand side is increasing in $r$. Therefore $p_1^{(n)}\leq p_2^{(n)}$, and Step 1 applies to them:
\[
\begin{aligned}
\operatorname{dist}_{S^2}\bigl(\Omega(p_1),\Omega(p_2)\bigr)=\operatorname{dist}_{S^2}
\bigl(\Omega(p_1^{(n)}),\Omega(p_2^{(n)})\bigr)\le
\left|
\int_{r(p_1^{(n)})}^{r(p_2^{(n)})}
\frac{\d r}{\sqrt{r(2M-r)}}
\right|.
\end{aligned}
\]
the first equality holding because the shift does not change the angular coordinate. Letting $n\to+\infty$, and using that $r(p_i^{(n)})\to r(p_i)$ and that the integrand is integrable at $r=2M$, we recover the estimate of Step 1 for $p_1,p_2$ themselves. Hence $\Omega$ is a Cauchy net on $D$ and $\Omega^*$ exists.



We conclude the proof by showing that $p^*=\sup D$ and that $\widehat D=J^-(p^*)$. Observe first that the coordinate functions $U$ and $V$ being monotone along $D$ and $\Omega$ being convergent by the first part of the proof, the net $D$ converges to $p^*$ in $\krus$.

\emph{The identity $p^*=\sup D$.}
Fix $p\in D$ and consider the cofinal tail $D_p:=\{q\in D:p\le q\}.$
The net $D_p$ converges to $p^*$. Since
$D_p\subseteq J^+(p)$ and $J^+(p)$ is closed, we obtain
$p\le p^*$. Thus $p^*$ is an upper bound of $D$.
Let $\widetilde p$ be any upper bound of $D$. Then
$D\subseteq J^-(\widetilde p)$. Since $J^-(\widetilde p)$ is closed
and $D$ converges to $p^*$,  we obtain $p^*\le\widetilde p$. Hence
$p^*=\sup D$.

\emph{The identity $\widehat D=J^-(p^*)$.} For the inclusion $\subseteq$, note that $J^-(p^*)$ is a lower set containing $D$ and that it is directed-sup-closed: if $E\subseteq J^-(p^*)$ is directed and admits a supremum, then $p^*$ is an upper bound of $E$ and therefore $\sup E\leq p^*$. By minimality of $\widehat D$ we conclude $\widehat D\subseteq J^-(p^*)$. For the inclusion $\supseteq$, observe that $D$ is directed, is contained in $\widehat D$ and admits the supremum $p^*$; since $\widehat D$ is directed-sup-closed, $p^*\in\widehat D$, and since $\widehat D$ is a lower set, $J^-(p^*)\subseteq\widehat D$.
\end{proof}

\subsubsection{Future timelike infinity}\label{subsection:limit_timelike}

In this subsection, we consider the directed sets that approach future timelike infinity. Recall that, since the map $\mathcal{T}$ which we defined in \eqref{eq:symmetric_map_UV}
acts as a time-orientation-preserving isometry on $\krus$, we can restrict our attention to $i^+$.

\begin{lemma}\label{lemma:timelike_infty} Let $D\subseteq \krus$ be a directed set such that $U^*=0, V^*= +\infty$. Then we have that $\widehat D = \{ U \leq 0\}$ and
\begin{equation} \label{eq: many angles}\widehat{D}=\widehat{J^-}\left (\gamma^{U^*,\Omega}\right),\quad \mathrm{for\, all } \,\, \Omega\in \mathbb{S}^2. 
\end{equation} 
\end{lemma}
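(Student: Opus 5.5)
We prove the two inclusions $\widehat D\subseteq\{U\le 0\}$ and $\{U\le0\}\subseteq\widehat D$, and then deduce \eqref{eq: many angles}.

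\emph{The inclusion $\widehat D\subseteq\{U\leq0\}$.} By minimality of $\widehat D$, it suffices to show that $L:=\{U\le 0\}$ is a lower set containing $D$ and is directed-sup-closed.
\begin{itemize}
\item $L$ contains $D$ because $U(p)\le U^*=0$ for every $p\in D$.
\item $L$ is a lower set because $U$ is monotone along future directed causal curves.
\item For directed-sup-closedness, take a directed $E\subseteq L$ with supremum $q\in\krus$. The supremum $\sup_E U$ is at most $0$. The case analysis of Lemma \ref{lemma:directed_set_tip_interiorM} (applied to $E$, whose supremum lies in $\krus$) gives $q$ coordinates $(\sup_E U,\sup_E V,\Omega^*)$, since $E$ converges to $\sup E$ by the closed-cone argument there. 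Hence $U(q)\le 0$.
\end{itemize}
One can avoid relying on the exact coordinates of $q$: $J^-(q)\supseteq E$ gives $U(q)\ge \sup_E U$, but we need the reverse inequality. For that we use that the net $E$ converges to $q$. This requires the boundedness of $U,V$ on $E$, which follows from $E\le q$, together with convergence of $\Omega$ from Lemma \ref{lemma:directed_set_tip_interiorM}. Then $U$ is continuous, so $U(q)=\lim_E U\le0$.

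\emph{The inclusion $\{U\le 0\}\subseteq\widehat D$.} This is the main step. Fix $q=(U_q,V_q,\Omega_q)$ with $U_q\le 0$.
\begin{itemize}
\item \emph{Building a chain below $\widehat D$.} Choose $\epsilon>0$. Since $\sup_D U=0$ and $\sup_D V=+\infty$, directedness gives $p\in D$ with $U(p)>-\epsilon$ and $V(p)$ as large as we like. The causal future/past of $p$ in the exterior region $\mathrm I$ contains the points $(U,V,\Omega)$ with $U\le U(p)$ and $V$ much smaller than $V(p)$, for any $\Omega$. This follows from estimate \eqref{eq: est omeg} read in reverse: a causal curve can change $\Omega$ by $\pi$ provided $\Delta U\,\Delta V$ (roughly) is large enough, i.e. at a cost in $V$ only. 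Concretely, we show $(U(p)-\delta, V,\Omega)\le p$ whenever $V\le V(p)-C/\delta$, using a curve that first moves in $V$ while rotating.
\item \emph{Passing to the horizon.} It follows that for each $\epsilon$ and each $V$ the points $(-\epsilon,V,\Omega_q)$ lie in $\widehat D$. Now take the directed chain $E=\{(-1/n, V_q,\Omega_q)\}_n$, which lies in $\widehat D$ and increases along $\gamma^{V_q,\Omega_q}$ direction. Its supremum is $(0,V_q,\Omega_q)$ by Lemma \ref{lemma:directed_set_tip_interiorM}. Directed-sup-closedness then puts $(0,V_q,\Omega_q)$ in $\widehat D$.
\item \emph{Conclusion of this inclusion.} Since $q\le(0,V_q,\Omega_q)$ via the null curve $\gamma^{V_q,\Omega_q}$, the lower-set property gives $q\in\widehat D$.
\end{itemize}
When $V_q\le 0$ these points lie in region IV or on the horizon. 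The same reasoning applies there, using the point $(0,V,\Omega_q)$ with large $V$ first and then going down.

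\emph{Deduction of \eqref{eq: many angles}.} For each $\Omega$, the set $\{\gamma^{0,\Omega}(s):s\in\R\}$ is directed. It has $U^*=0$ and $V^*=+\infty$, so the identity just proved applied to it gives $\widehat{J^-}(\gamma^{0,\Omega})=\{U\le 0\}=\widehat D$.

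The main obstacle is the quantitative angular estimate: showing that a prescribed angular displacement can be achieved causally in the exterior with a bounded $U$-increment, provided the $V$-increment is large. We do this via an explicit curve with $\dot U$ small, $\dot V$ large and $|\dot\Omega|=c\sqrt{\dot U\dot V}$, using $r\ge 2M$ and a lower bound on the conformal factor on compact $r$-ranges.
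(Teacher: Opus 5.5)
Your overall architecture matches the paper's proof:
\begin{itemize}
\item the inclusion $\widehat D\subseteq\{U\le 0\}$, which your argument via Lemma \ref{lemma:directed_set_tip_interiorM} handles correctly;
\item the reduction to showing that each point with $U<0$ lies below some element of $D$;
\item horizon points via directed-sup-closedness;
\item \eqref{eq: many angles}, obtained by applying the result to the directed set $\gamma^{0,\Omega}$.
\end{itemize}

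The gap is in the step you yourself flag as the main obstacle: as stated, it is false. The estimate \eqref{eq: est omeg} is only an upper bound on $|\dot\Omega|$ and cannot be read in reverse, because the coefficient $\frac{32M^3}{r^3}e^{-r/2M}$ has no positive lower bound on region $\mathrm I$. In fact, using \eqref{relation uv}, causality in region $\mathrm I$ reads
\[
|\dot\Omega|^2\le\frac{16M^2(r-2M)}{r^3}\,\frac{\dot U}{-U}\,\frac{\dot V}{V}\le\frac{16}{27}\,\frac{\dot U}{-U}\,\frac{\dot V}{V}.
\]
By Cauchy--Schwarz, a causal curve in region $\mathrm I$ from $(U_0,V_0)$ to $(U_1,V_1)$ therefore turns by at most $\frac{4}{3\sqrt3}\bigl(\log\frac{U_0}{U_1}\,\log\frac{V_1}{V_0}\bigr)^{1/2}$. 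Only ratios matter, not additive increments.

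Now take $U(p)=-\delta$ and $V=V(p)-C/\delta$. Any causal curve from $(U(p)-\delta,V,\Omega)$ to $p$ stays in region $\mathrm I$, with $U_0/U_1=2$ and $V_1/V_0\to 1$ as $V(p)\to+\infty$. So for $\Omega$ antipodal to $\Omega(p)$ and $V(p)$ large, $(U(p)-\delta,V,\Omega)\not\le p$, contradicting your criterion.

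The heuristic behind the criterion fails for the same reason. A long excursion in $V$ with $U$ bounded away from $0$ sends $-UV$, hence $r$, to infinity, and the decay of the coefficient exactly cancels the gain in $\dot U\dot V$. So there is no compact $r$-range to work in. Likewise, choosing $p\in D$ with merely $U(p)>-\epsilon$ does not help: if $U(p)$ is close to $-\epsilon$, the budget $\log(\epsilon/|U(p)|)$ is close to $0$. Two smaller points: a curve moving only in $V$ cannot rotate at all, and points with $U=U(p)$, $\Omega\neq\Omega(p)$ never lie in $J^-(p)$.

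A useful sanity check is that your mechanism uses $U^*=0$ only through ``$U(p)>-\epsilon$''. It would equally put all of $\{U<U^*\}$ into $\widehat D$ when $U^*\in(-\infty,0)$ and $V^*=+\infty$. That is incompatible with Lemma \ref{lemma:future_null_angle} and Corollary \ref{cor:future_null_order}, where $\widehat D$ remembers the limiting angle.

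The missing idea is that $U^*=0$ lets the ratio $U(p)/U(p_1)$ be made arbitrarily small, i.e.\ it provides an unbounded logarithmic budget in $U$. The paper exploits this by rotating at constant $r$:
\begin{itemize}
\item for $p_1$ in region $\mathrm I$ it fixes $\lambda_*$ with $\lambda_*^2\frac{32M^3}{r(p_1)^3}e^{-r(p_1)/2M}\bigl(-U(p_1)V(p_1)\bigr)>\pi^2$;
\item it picks $p_2\in D$ with $U(p_2)>U(p_1)e^{-\lambda_*}$ and $V(p_2)>V(p_1)e^{\lambda_*}$, which is possible precisely because $U^*=0$ and $V^*=+\infty$;
\item it joins $p_1$ to $p_2$ by $s\mapsto\bigl(U(p_1)e^{-s\lambda_*},V(p_1)e^{s\lambda_*},\sigma_s\bigr)$, with $\sigma$ a minimizing geodesic of $\mathbb{S}^2$ from $\Omega(p_1)$ to $\Omega(p_2)$, followed by a straight segment at fixed angle.
\end{itemize}
The first curve lies on the hyperbola $UV=U(p_1)V(p_1)$, where $r\equiv r(p_1)$. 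It has $\dot U\dot V\equiv-\lambda_*^2U(p_1)V(p_1)$ and $|\dot\sigma|\le\pi$, hence it is causal. Note that the conditions on $p_2$ are multiplicative, not additive.

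With this step replaced, the rest of your argument goes through. One minor correction: $\{U\le 0,\,V\le 0\}$ is the white hole region $\mathrm{III}$, not $\mathrm{IV}$.
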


\begin{proof}
It is enough to show that $\{ U \leq 0\} \subseteq \widehat D$, since the other inclusion is trivial. Let $p_1$ be a point with $U(p_1)<0$. We claim that there exists some $p_2 \in \widehat D$ such that $p_1\leq p_2$. Once this claim proved,  the conclusion follows. Without loss of generality we assume that $r(p_1) >2M$ and, by the identity in \eqref{relation uv}, there exists $\lambda_*>0$ such that
\[
    \frac{32M^3}{r(p_1)}e^{-\frac{r(p_1)}{2M}}
    \bigl(-\lambda_*^2U(p_1)V(p_1)\bigr)
    >
    r^2(p_1)\pi^2 .
\]
Since $U^*=0$ and $V^*=+ \infty$, there exists
$p_2\in D$ such that $U(p_2)>U(p_1)e^{-\lambda_*}$ and $V(p_2)>V(p_1)e^{\lambda_*}$. Let $q \in \krus$ be the point with coordinates $(U(p_1)e^{-\lambda_*},V(p_1)e^{\lambda_*},\Omega(p_2))$, and note that, by \eqref{relation uv}, we have that $r(q)=r(p_1)$.
Let $\sigma$ be a minimizing geodesic in $\mathbb{S}^2$ from
$\Omega(p_1)$ to $\Omega(p_2)$ and consider 
\[
    \gamma_s:
    =
    \left (U(p_1)e^{-s\lambda_*},
          V(p_1)e^{s\lambda_*},
          \sigma_s\right ).
\]
Notice that $\gamma :[0,1] \rightarrow \krus$ is a causal curve by the choice of $\lambda_*$, connects $p_1$ to $q$ and, by \eqref{relation uv}, $r(\gamma_s)$ is constantly equal to $r(p_1)$. 

Let $\tilde \gamma:[0,1]\to \krus$ be the curve defined, in coordinates, by
\[
    \tilde \gamma_s
    :=
    \bigl((1-s)U(q)+sU(p_2),
          (1-s)V(q)+sV(p_2),
          \Omega(p_2)\bigr).
\]
It is clear that $\tilde \gamma$ joins $q$ to $p_2$ and it is a future-directed causal curve. Concatenating $\gamma$ with $\tilde \gamma$, we obtain $p_1\leq p_2$. Hence, the claim follows and the identity \eqref{eq: many angles} is an easy consequence of the construction.
\end{proof}



\subsubsection{Future null infinity}\label{subsection:limit_infinity}

In this subsection, we consider the directed sets that approach \say{future null infinity}. Recall that, since the map $\mathcal{T}$ which we defined in \eqref{eq:symmetric_map_UV}
acts as a time-orientation-preserving isometry on $\krus$, we can restrict our attention to $\mathcal{I}^+$.

\begin{lemma}\label{lemma:future_null_angle}
Assume that $D\subseteq \krus$ is a directed set such that $U^*\in(-\infty,0), V^*=+\infty$. Then there exists $\Omega^*\in \mathbb{S}^2$ such that $\lim_{p\in D} \Omega(p)=\Omega^*$. Moreover, we have that
\[
    \widehat D =
    J^-\left (\gamma^{U^*,\Omega^*} \right )
    =
    \widehat{J^-}\left (\gamma^{U^*,\Omega^*}\right ).
\]
    \end{lemma}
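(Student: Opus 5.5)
The argument has three parts: the angular limit, the inclusion $\widehat D\subseteq J^-(\gamma^{U^*,\Omega^*})$, and the reverse inclusion $J^-(\gamma^{U^*,\Omega^*})\subseteq\widehat D$. Together these give all the stated equalities.

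\emph{Angular limit.} First replace $D$ by a cofinal tail. Since $U^*<0$ and $V^*=+\infty$, there is $p_0\in D$ with $V(p_0)>0$, and every $q\ge p_0$ in $D$ has $U(q)\le U^*<0$. So after replacing $D$ by $\{p\in D: p\ge p_0\}$ we may assume $D\subseteq \mathrm I=\{U<0,V>0\}$. Inside $\{U\le 0,V>0\}$ the estimate \eqref{eq: est omeg} from the proof of Lemma \ref{lemma:directed_set_tip_interiorM} i) only gives $\mathrm{dist}(\Omega(p_1),\Omega(p_2))\lesssim \Delta U+\Delta V$. This is useless here, because $\Delta V$ is unbounded.

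So I would use a sharper form of the causality inequality:
\[
r^2|\dot\Omega|^2\le \tfrac{32M^3}{r}e^{-r/2M}\dot U\dot V .
\]
Write $\dot V\,\d s=\d V$ and apply Cauchy--Schwarz:
\[
\int|\dot\Omega|\le \Big(\int \tfrac{32M^3e^{-r/2M}}{r^3}\,\dot V\,\d s\Big)^{1/2}\Big(\int\dot U\,\d s\Big)^{1/2}.
\]
The second factor equals $(\Delta U)^{1/2}$, which tends to $0$ along the net. For the first factor, \eqref{relation uv} gives $e^{-r/2M}\approx \frac{(r/2M-1)}{-UV}$. With $U\le U(p_1)<0$ fixed, this yields
\[
\frac{e^{-r/2M}}{r^3}\,\d V\lesssim \frac{\d V}{r^2\,|U^*|\,V}.
\]
Along the curve $r$ grows at least like $\log V$, so $\int^\infty \frac{\d V}{V\log^2V}<\infty$ and the first factor stays bounded. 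Hence $\Omega$ is a Cauchy net and $\Omega^*$ exists. \textbf{This step is the main obstacle}: one has to get a bound on $\int|\dot\Omega|$ that is uniform in $\Delta V$ while $\Delta U\to0$, and the $r$-dependence has to be controlled carefully via \eqref{relation uv}.

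\emph{Inclusion $\widehat D\subseteq J^-(\gamma^{U^*,\Omega^*})$.} Here I would show that $J^-(\gamma^{U^*,\Omega^*})$ is a lower set which is directed-sup-closed and contains $D$. It is a lower set by transitivity. For $p\in D$: for every large $s$, the point $\gamma^{U^*,\Omega^*}(s)$ lies in the future of $p$. To see this, use a two-piece curve as in the proof of Lemma \ref{lemma:timelike_infty}: raise $U$ slightly from $U(p)$ to $U^*$ while letting $V$ grow, which allows the angular displacement to be absorbed, then follow a null segment. If $U(p)=U^*$ we must instead argue that $\Omega(p)=\Omega^*$. This holds because a later point of $D$ with the same $U$ forces a radial null relation, and hence the angle is frozen along the tail.

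For directed-sup-closedness: if $E\subseteq J^-(\gamma^{U^*,\Omega^*})$ is directed with supremum in $\krus$, then by Lemma \ref{lemma:directed_set_tip_interiorM} the supremum is the limit of $E$. Each $e\in E$ satisfies $e\le \gamma(s_e)$. Because $\sup_E V<\infty$ and $U\le U^*$, the points $\gamma(s_e)$ can be chosen in a bounded segment, and closedness of $J^-$ on compact segments (global hyperbolicity) gives $\sup E\in J^-(\gamma^{U^*,\Omega^*})$. By minimality of $\widehat D$ the inclusion follows, and the same argument shows $J^-(\gamma)=\widehat{J^-}(\gamma)$.

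\emph{Inclusion $J^-(\gamma^{U^*,\Omega^*})\subseteq\widehat D$.} It suffices to show $\gamma^{U^*,\Omega^*}(s)\in\widehat D$ for every $s$. Fix $s$. Choose $p_n\in D$ with $U(p_n)\uparrow U^*$, $V(p_n)\to\infty$ and $\Omega(p_n)\to\Omega^*$. Let $q_n$ be the point with coordinates $(U(p_n)-\tfrac1n,\,s,\,\Omega^*)$. For large $n$, $q_n\le p_n$ by the same sort of explicit curve as in Lemma \ref{lemma:timelike_infty}: the huge gain in $V$ pays for the small angular correction. Then $q_n\in\widehat D$, since $\widehat D$ is a lower set. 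The $q_n$ lie on the null generator of $\{V=s\}$ at angle $\Omega^*$, form a chain, and have supremum $\gamma^{U^*,\Omega^*}(s)$ by Lemma \ref{lemma:directed_set_tip_interiorM}. Since $\widehat D$ is directed-sup-closed, $\gamma^{U^*,\Omega^*}(s)\in\widehat D$. Because $\widehat D$ is a lower set, all of $J^-(\gamma^{U^*,\Omega^*})$ is contained in $\widehat D$, which completes the equalities.
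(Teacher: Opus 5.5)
\paragraph{Where the proposal breaks.} Your angular-limit step is correct, and it follows a different route from the paper's. The paper uses Young's inequality with weight $r^{-2}$, trades $\dot V/V$ for $\dot r$ via \eqref{eq:computation_r_dot}, and integrates $\dot r/(r(r-2M))$ exactly, obtaining $\operatorname{dist}_{\mathbb{S}^2}(\Omega(p_1),\Omega(p_2))\lesssim\log(U(p_1)/U^*)+2/r(p_1)$. Your Cauchy--Schwarz splitting gives more directly $\operatorname{dist}_{\mathbb{S}^2}(\Omega(p_1),\Omega(p_2))\le C\sqrt{U^*-U(p_1)}$, with $C=C(M,U^*,V(p_0))$.

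The genuine gap is the inclusion $\widehat D\subseteq J^-(\gamma^{U^*,\Omega^*})$, and it cannot be closed. Your own estimate shows why:
\begin{itemize}
\item A causal curve gains at most $C\sqrt{\Delta U}$ in angle, however much $V$ grows. So ``letting $V$ grow absorbs the angular displacement'' is false.
\item Step~1 only gives $\operatorname{dist}_{\mathbb{S}^2}(\Omega(p),\Omega^*)\le C\sqrt{U^*-U(p)}$, which is of the same order. So nothing yields $p\in J^-(\gamma^{U^*,\Omega^*})$.
\item A bound on $V(e)$ does not bound $s_e$, so the ``bounded segment'' claim in your directed-sup-closedness argument also fails.
\end{itemize}

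\paragraph{Why the middle term of the statement is false.} Write $\gamma:=\gamma^{U^*,\Omega^*}$; then $J^-(\gamma)$ is not directed-sup-closed. Fix $V_0>0$ and $\eps>0$ with $\eps C<\pi^2$, and consider the sphere $\Sigma:=\{U=U^*-\eps,\ V=V_0\}$. The point of $\Sigma$ with angle $\Omega^*$ lies in $I^-(\gamma)$. The antipodal point is not in $J^-(\gamma)$, by your angle bound. By connectedness there is some $y\in\Sigma\cap\partial I^-(\gamma)$.

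On one hand, $I^-(y)\subseteq I^-(\gamma)$, and $y$ is the supremum of a timelike chain in $I^-(y)$, so $y\in\widehat{J^-}(\gamma)$. On the other hand, $y\notin J^-(\gamma)$. Indeed, if $y\le\gamma(s)$, then a causal curve from $y$ to $\gamma(s)$ followed by $\gamma|_{[s,s+1]}$ is not a null pregeodesic, since $U(y)\neq U^*$. Hence $y\ll\gamma(s+1)$, contradicting $y\in\partial I^-(\gamma)$.

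Thus $J^-(\gamma)\subsetneq\widehat{J^-}(\gamma)$. Since every $\widehat D$ is directed-sup-closed, $\widehat D\neq J^-(\gamma)$ for every $D$. So both equalities in the statement that involve the middle term $J^-(\gamma)$ are false. Even $D\subseteq J^-(\gamma)$ fails in general: take the null generator of $\partial I^-(\gamma)$ issuing from $y$, which has the same $U^*$ and limiting angle $\Omega^*$.

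The paper's proof makes the same two assertions without valid justification, namely $J^-(p_i)\subset J^-(\gamma^{U^*,\Omega^*})$ and directed-sup-closedness ``since $V^*=+\infty$''. They fail for the same reason.

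\paragraph{What can be proved.} The true statement is $\widehat D=\widehat{J^-}(\gamma)=\overline{I^-(\gamma)}$, which is the form actually used in Proposition \ref{prop:structure_Dhat_general}. Since $\widehat A=\overline{I^-(A)}$ for every $A\subseteq\krus$, this amounts to $I^-(D)=I^-(\gamma)$.

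The inclusion $I^-(D)\subseteq I^-(\gamma)$ needs two ingredients missing from your proposal:
\begin{itemize}
\item a matching \emph{lower} bound on the attainable angle, of order $\sqrt{\Delta U\int_V^\infty\frac{32M^3}{r^3}e^{-r/2M}\d V}$;
\item a way to convert the slack in $z\ll p$ into a fixed $U$-gap along the tail, e.g.\ via the isometry $(U,V)\mapsto(\lambda U,V/\lambda)$ with $\lambda>1$.
\end{itemize}

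Your third step can be repaired. With $U(q_n)=U(p_n)-1/n$, the attainable angle is only $O(n^{-1/2})$. Instead take $U(q_n)=U(p_n)-\eps_n$ with $\eps_n\to0$ and $\eps_n\gg\operatorname{dist}_{\mathbb{S}^2}(\Omega(p_n),\Omega^*)^2$, and perform the rotation inside a compact window $V\in[s,s+1]$.
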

    
\begin{proof} Possibly replacing $D$ with the smallest lower set containing it, it is not restrictive to assume that $D$ is also a lower set.  Now fix $\eps>0$ and notice  that since the assumptions combined with \eqref{relation uv} imply that \(\lim_{p \in D} r(p) = +\infty\),  there is $p\in D$ such that
\begin{equation}
\label{eq:pgrande}
r(p')>\max\{8M,\eps^{-1}\},\quad V(p')>0,\quad \log\left(\tfrac{U(p')}{U^*}\right)<\eps,\qquad\forall\, p'\in D,\ p\leq p'.
\end{equation}
Fix such $p$, let $p_1,p_2\in D$ be such that $p\leq p_1 \leq p_2$ and  then let $\gamma:[0,1]\to\krus$ be a future directed causal curve joining \(p_{1}\) to \(p_{2}\). Put for brevity $U_s:=U(\gamma_s)$ and define similarly $V_s,r_s,\Omega_s$ and notice that since $D$ is a lower set we have $\gamma_s\in D$ for every $s\in[0,1]$, so that \eqref{eq:pgrande} holds for $p':=\gamma_s$.

Since $\gamma$ is a causal curve we have $\frac{32M^3}{r^3_s}e^{-\frac{r_s}{2M}}\,\dot{U}_s\dot{V}_s-|\dot\Omega_s|^2\ge 0.
$ 
Combining this with \eqref{relation uv} and Young's inequality, we get
\begin{equation}\label{Omega es}
    |\dot\Omega_s|\le
4M\,\frac{\sqrt{r_s -2M}}{r_s^{ 3/2}}
\frac{\sqrt{\dot{U}_s\dot{V}_s}}{\sqrt{-U_s V_s }}\leq\frac{4M}{r_s}\sqrt{\frac{\dot U_s}{-U_s}}\sqrt{\frac{\dot V_s}{V_s}}\leq 2M\left(
-\frac{\dot{U}_s}{\ U_s }+\frac{\dot{V}_s }{r_s^2V_s}
\right).
\end{equation}
Taking logs in \eqref{relation uv} and differentiating along $\gamma_s$  we obtain 
\begin{equation}\label{eq:computation_r_dot}
    \frac{\dot{U}_s}{U_s}+    \frac{\dot{V}_s}{V_s}
=
\frac{r_s }{2M(r_s -2M)}\,\dot r_s.
\end{equation}
Recalling that $-U_s,\dot U_s,V_s,\dot V_s,r_s-2M>0$ we deduce that 
\begin{equation}
\label{eq:rprimo}    
\dot r_s 
\ge
\frac{2M(r_s -2M )}{r_s }\frac{\dot U_s}{U_s }\geq 2M\frac{\dot U_s }{U_s }.
\end{equation}
Combining \eqref{Omega es} and \eqref{eq:computation_r_dot}, we get $
 |\dot\Omega_s|\leq 2M\big(-\frac{\dot U_s}{U_s}(1+\frac{1}{r_s^2})+\frac{\dot r_s}{2Mr_s(r_s-2M)}\big)
$
so that, taking into account \eqref{eq:pgrande}, we have
\[
 |\dot\Omega_s|\leq 2M\left(-\tfrac{\dot U_s}{U_s}\right)(1+4M^{-2})+\tfrac{\dot r_s}{r_s(r_s-2M)},
\]
where we used that $r_s\geq 8M$ gives $1+r_s^{-2}\leq 1+4M^{-2}$. We do not estimate the last term pointwise: its sign is not fixed, because along a future directed causal curve in the exterior region $\dot r_s$ may be negative, and multiplying by the two different positive denominators $r_s(r_s-2M)$ and $r_s^2$ would then reverse the desired comparison. We integrate it exactly instead. Set
\[
    F(r):=\frac{1}{2M}\log\Bigl(1-\frac{2M}{r}\Bigr),\qquad r>2M,
\]
so that $F'(r)=1/[r(r-2M)]$ and $F\leq0$. Then
\[
    \int_0^1\frac{\dot r_s}{r_s(r_s-2M)}\,\d s
    =
    F\bigl(r(p_2)\bigr)-F\bigl(r(p_1)\bigr)
    \leq
    -F\bigl(r(p_1)\bigr)
    =
    \frac{1}{2M}\log\frac{r(p_1)}{r(p_1)-2M}
    \leq
    \frac{2}{r(p_1)},
\]
the last inequality following from $\log(1+x)\leq x$ and from $r(p_1)\geq8M$, which gives $r(p_1)-2M\geq\tfrac34 r(p_1)$ and therefore $\tfrac1{2M}\log\bigl(1+\tfrac{2M}{r(p_1)-2M}\bigr)\leq\tfrac{4}{3r(p_1)}$. Since moreover $\int_0^1(-\dot U_s/U_s)\,\d s=\log\bigl(U(p_1)/U(p_2)\bigr)\leq\log\bigl(U(p_1)/U^*\bigr)$, because $U(p_2)\leq U^*<0$,
it follows that
\begin{equation}\label{estimate of angle}
    \operatorname{dist}_{\mathbb{S}^2}\bigl(\Omega(p_1),\Omega(p_2)\bigr)\leq 2M(1+4M^{-2})\log\left(\tfrac{U(p_1)}{U^*}\right)+\tfrac2{r(p_1)}\stackrel{\eqref{eq:pgrande}}\leq C_M\eps.
\end{equation}
Therefore $\Omega(p)$ is a Cauchy net in $\mathbb{S}^2$, and so the first claim follows.

Now we move to the second part. 
We first prove that $J^-(\gamma^{U^*,\Omega^*})\subseteq \widehat D$.
Let $(p_i)_{i\in \N}\subset D$ be a sequence such that $p_i\leq p_{i+1}$ for every $i$ and
\begin{equation} \label{eq: conv}
    U(p_i):= U_i\to U^*,\qquad  V(p_i):=V_i\to+\infty,\qquad  \Omega (p_i):=\Omega_i\to \Omega^* .
\end{equation}
Since $\gamma^{U_i,\Omega_i}(V_i)=p_i$ and $D$ is a lower set, we have $\gamma^{U_i,\Omega_i}((1/U_i,V_i])\subseteq D$. Therefore,
\[
    J^-\!\left(\gamma^{U_i,\Omega_i}\bigl((1/U_i,V_i]\bigr)\right)
    \subseteq \widehat D .
\]
We now let $i\to+\infty$. Since $\widehat D$ is directed-sup-closed and every point of $\gamma^{U^*,\Omega^*}$ is the supremum of an increasing sequence of points lying strictly in its chronological past, it is enough to show that every $q$ with $U(q)<U^*$ and $q\in J^-(\gamma^{U^*,\Omega^*})$ belongs to $\widehat D$. For such a $q$ we have $U(q)<U_i$ for $i$ large, while $\Omega_i\to\Omega^*$ and $V_i\to+\infty$; the angular gap between $q$ and $p_i$ can then be covered by a rotation at constant $r$ followed by a radial null segment, so that $q\leq p_i$ and hence $q\in\widehat D$.

Conversely, fix $p\in D$. Since $D$ is directed, there exists a sequence $(p_i)_{i\in \N}\subset D$ 
such that $p\leq p_i\leq p_{i+1}$ for every $i$ and \eqref{eq: conv} holds. Then,
\begin{equation}\label{eq:increasing_geodesics}
    p\in J^-(p_i)=J^-\!\big(\gamma^{U_i,\Omega_i}\bigl((1/U_i,V_i]\bigr)\big)\subset J^-\left (\gamma^{U^*,\Omega^*}\right )
\end{equation}
Hence, $D\subseteq J^-(\gamma^{U^*,\Omega^*}) =\widehat{J^-}(\gamma^{U^*,\Omega^*})$, where the last identity follows since $J^-(\gamma^{U^*,\Omega^*})$ is directed-sup-closed since $V^*=+ \infty$.
\end{proof}


In the next corollary, we provide the inclusion relation among directed sets reaching future null infinity.

\begin{corollary}\label{cor:future_null_order}
Assume that $D_i \subseteq \krus$, $i=1,2$, are two directed sets such that $\sup_{p\in D_i} U(p)=U_i^*\in (-\infty,0)$ and $\sup_{p\in D_i} V(p)=V_i^*=+\infty$. By Lemma \ref{lemma:future_null_angle}, there exists $\Omega_i^*\in \mathbb{S}^2$ such that $\lim_{p\in D_i} \Omega(p)=\Omega_i^*$. Then
\[
\widehat D_1\subseteq \widehat D_2
\qquad\Longleftrightarrow\qquad
\Omega^*_1=\Omega^*_2\quad \mathrm{and}\quad U^*_1\le U^*_2.
\]
In particular,
\[
\widehat D_1=\widehat D_2
\qquad\Longleftrightarrow\qquad
\Omega^*_1=\Omega^*_2\quad \mathrm{and}\quad  U^*_1=U^*_2.
\]
\end{corollary}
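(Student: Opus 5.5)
By Lemma \ref{lemma:future_null_angle}, $\widehat D_i = J^-(\gamma^{U_i^*,\Omega_i^*})$ for $i=1,2$. The statement therefore reduces to a comparison of causal pasts of the two inextendible radial null geodesics $\gamma^{U_i^*,\Omega_i^*}$, which lie in the region $\{U=U_i^*\}$ with $V\in(1/U_i^*,+\infty)$. The second equivalence follows from the first by antisymmetry of inclusion, so only the first needs proof.

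For ``$\Leftarrow$'', assume $\Omega_1^*=\Omega_2^*=:\Omega^*$ and $U_1^*\le U_2^*$. It suffices to show that every point $\gamma^{U_1^*,\Omega^*}(s)$ lies in $J^-(\gamma^{U_2^*,\Omega^*})$. The curve $\sigma\mapsto(\sigma, s,\Omega^*)$ for $\sigma\in[U_1^*,U_2^*]$ is a future directed radial null curve. It joins $\gamma^{U_1^*,\Omega^*}(s)$ to $\gamma^{U_2^*,\Omega^*}(s)$, and the latter point lies in $\krus$ because $U_2^*s<1$ whenever $s>1/U_1^*\ge 1/U_2^*$ (recall $U_i^*<0$). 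Transitivity of $\le$ then gives $J^-(\gamma^{U_1^*,\Omega^*})\subseteq J^-(\gamma^{U_2^*,\Omega^*})$.

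For ``$\Rightarrow$'', assume $\widehat D_1\subseteq\widehat D_2$. First I show $U_1^*\le U_2^*$. The coordinate $U$ is monotone along causal curves, so every $q\in J^-(\gamma^{U_2^*,\Omega_2^*})$ satisfies $U(q)\le U_2^*$. Points of $\gamma^{U_1^*,\Omega_1^*}$ have $U=U_1^*$ and lie in $\widehat D_1\subseteq \widehat D_2$, hence $U_1^*\le U_2^*$.

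Next I show the angles agree; this is the main obstacle. Suppose $\Omega_1^*\neq\Omega_2^*$ and set $\delta:=\operatorname{dist}_{\mathbb S^2}(\Omega_1^*,\Omega_2^*)>0$. The plan is to reuse the angular estimate \eqref{estimate of angle} from the proof of Lemma \ref{lemma:future_null_angle}. That estimate shows that along a future directed causal curve in the far exterior region with $U\le U_2^*$, the total angular displacement is bounded by $C\log(U_{\rm start}/U_2^*)+2/r_{\rm start}$.

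Take $q=\gamma^{U_1^*,\Omega_1^*}(s)$ with $s$ large. By assumption $q\le p$ for some $p$ on $\gamma^{U_2^*,\Omega_2^*}$; take a causal curve from $q$ to $p$. Along this curve $U\in[U_1^*,U_2^*]\subset(-\infty,0)$ and $V\ge s$, so by \eqref{relation uv} the radius $r\to\infty$ uniformly as $s\to\infty$. The curve therefore stays in the exterior region where the estimate holds.

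If $U_1^*=U_2^*$, the $\log$ term vanishes and the displacement is at most $2/r(q)\to0$, contradicting $\delta>0$. If $U_1^*<U_2^*$ the $\log$ term does not vanish, so I would not apply the estimate directly to $q$. Instead I would exploit the freedom to pick a point $q'\in\widehat D_1$ with $U(q')$ arbitrarily close to $U_2^*$. Concretely, $\widehat D_1$ contains $\gamma^{U_1^*,\Omega_1^*}$ but only points with $U\le U_1^*$, so this is delicate. I would first try the following refinement: shrink the $\log$ contribution by splitting the estimate. The $\dot U/U$ term carries the factor $4M/r_s$ in the first inequality of \eqref{Omega es} before Young's inequality is applied. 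Optimizing Young's inequality with a parameter should then give a displacement bound $o(1)$ as $r\to\infty$, uniformly for $U$ in a compact subset of $(-\infty,0)$. This yields $\Omega_1^*=\Omega_2^*$.
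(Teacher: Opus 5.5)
Your proposal is correct and follows essentially the same route as the paper. The easy direction uses the radial null segment at fixed $V$, which is what the paper leaves as ``easy to see''. The inequality $U_1^*\le U_2^*$ comes from monotonicity of $U$. The angles are identified via the estimate \eqref{estimate of angle}, with Young's inequality re-weighted when $U_1^*<U_2^*$, exactly as in the paper.

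The differences are minor. You start from points on the generator $\gamma^{U_1^*,\Omega_1^*}$ itself, so the angular displacement is exactly $\delta$. You also let the Young parameter depend on the lower bound $r_0$ of $r$ along the connecting curve (a weight of order $r_0^{-1/2}$ on the $-\dot U/U$ term), which gives an $o(1)$ bound. The paper instead fixes a $\delta$-dependent parameter and derives the contradiction $\delta\le\delta/2$.
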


\begin{proof}
If $\Omega_1^*=\Omega_2^*=:\Omega^*$ and $U_1^*\leq U_2^*$, then by Lemma \ref{lemma:future_null_angle} it is easy to see that 
\[ 
\widehat D_1 = J^-(\gamma^{U_1^*,\Omega^*}) \subseteq J^-(\gamma^{U_2^*,\Omega^*}) = \widehat D_2 \, .
\]
In particular, if $U_1^*=U_2^*$, then $\widehat D_1=\widehat D_2$.

Conversely, assume that $\widehat D_1\subseteq \widehat D_2$. Then, since $U$ satisfies the {\sf Mcp}, $U_1^*\leq U_2^*$. Now, we claim that $\Omega_1^*=\Omega_2^*$. Let $(p_n)_{n \in \N} \subseteq D_1$ such that
\[
    U(p_n) <U_1^*,\quad U(p_n)\to U_1^*,\quad V(p_n)\to+\infty .
\]
Since $\widehat D_1\subseteq\widehat D_2$, we may find a point $q_n$ on the support of $\gamma^{U_2^*,\Omega_2^*}$ such that $
    p_n\leq q_n, V(q_n)\to+\infty.$
We now distinguish two cases. If $U_1^*=U_2^*$, then applying \eqref{estimate of angle} to $p_n\leq q_n$, for $n$ large enough, gives
\[
 \operatorname{dist}_{\mathbb{S}^2}\bigl(\Omega(p_n),\Omega(q_n)\bigr)\leq 2M(1+4M^{-2})\log\left(\tfrac{U(p_n)}{U_2^*}\right)+\tfrac2{r(p_n)}\xrightarrow{n\to+\infty} 0,
\]
which implies $\Omega^*_1=\Omega^*_2$. If $U_1^*<U_2^*$, suppose by contradiction that $ \delta:=\operatorname{dist}_{\mathbb{S}^2}(\Omega_1^*,\Omega_2^*)>0.$
Then, for $n$ large enough, changing only the parameter in Young's inequality and arguing as in the derivation of \eqref{estimate of angle}, we obtain
\begin{equation*}
     \mathrm{dist}_{\mathbb{S}^2}\bigl(\Omega(p_n),\Omega(q_n)\bigr)
\le \frac{\delta}{2\log\frac{U^*_1}{U^*_2}} \, \log\left ( \tfrac{U(p_n)}{U^*_2} \right )+\tfrac C{r(p_n)},
\end{equation*}
where $C$ is a constant depending only on $\delta$, $M$, $U_1^*$ and $U_2^*$. Therefore,
\begin{equation*}
        0<\delta=  \operatorname{dist}_{\mathbb{S}^2}(\Omega^*_1,\Omega^*_2)=\lim_{n\to +\infty}  \operatorname{dist}_{\mathbb{S}^2}\bigl(\Omega(p_n),\Omega(q_n)\bigr)\leq \tfrac{\delta}{2},
\end{equation*}
which is a contradiction. This completes the proof.
\end{proof}

\subsubsection{The black hole singularity}\label{subsection:limit_inside}

In the following lemma we discuss directed sets that end in the black hole singularity. Notice that, unlike the situation in Lemma \ref{lemma:future_null_angle}, we have that $U^*\in(0,+\infty), V^* =\tfrac{1}{U^*}$ and also $\widehat{J^-}(\gamma^{U^*,\Omega^*}) \neq {J^-}(\gamma^{U^*,\Omega^*})$.

\begin{lemma}\label{lemma:singularity-angle}
Assume that $D \subseteq \krus$ is a directed set such that $U^*\in(0,+\infty), V^* =\tfrac{1}{U^*}$. Then, there exists $\Omega^*\in \mathbb{S}^2$ such that $\lim_{p\in D} \Omega(p)=\Omega^*$ and $\widehat D=\widehat{J^-}(\gamma^{U^*,\Omega^*}).$
\end{lemma}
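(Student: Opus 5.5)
The plan has two steps: first show that the angular coordinate converges along $D$, then identify $\widehat D$ by proving both inclusions.

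\textbf{Angular limit.} By directedness and $U^*,V^*>0$, we may pass to the cofinal tail $\{p\in D: p\geq p_0\}$ with $U(p_0),V(p_0)>0$. Lemma \ref{lemma:directed_set_tip_interiorM} (Step 2) shows this replacement does not change $\widehat D$, $U^*$ or $V^*$. After it, $D$ lies in the black hole region $\{U>0,V>0\}$, where $r<2M$ and $r$ is nonincreasing along future causal curves. The estimate of Step 1 in Lemma \ref{lemma:directed_set_tip_interiorM} then gives, for $p_1\leq p_2$ in $D$,
\[
\operatorname{dist}_{\mathbb{S}^2}\bigl(\Omega(p_1),\Omega(p_2)\bigr)\leq \int_{r(p_2)}^{r(p_1)}\frac{\d r}{\sqrt{r(2M-r)}} .
\]
Now $U(p)V(p)\to U^*V^*=1$ along $D$, so $r(p)\to 0$ by \eqref{relation uv}. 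Since the integrand is integrable at $r=0$, $\Omega$ is a Cauchy net, and its limit $\Omega^*$ exists.

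\textbf{Inclusion $\widehat D\subseteq\widehat{J^-}(\gamma^{U^*,\Omega^*})$.} By minimality of $\widehat D$, it suffices to show $D\subseteq \widehat{J^-}(\gamma^{U^*,\Omega^*})$. Here $\gamma^{U^*,\Omega^*}$ is parametrized on $s\in(-\infty,V^*)=(-\infty,1/U^*)$. Fix $p\in D$ and pick $q\in D$ with $p\ll q$; this is possible by approximability together with directedness, taking $q$ further along $D$ so that $U(q)>U(p)$ and $V(q)>V(p)$, possibly after a small perturbation. Set $\delta:=U(q)-U(p)>0$ and $\epsilon:=V(q)-V(p)>0$. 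I claim $p\leq\gamma^{U^*,\Omega^*}(s)$ for $s$ close to $V^*$. I would build the connecting curve as in Lemma \ref{lemma:timelike_infty}. First go along a curve that increases $U$ and $V$ by amounts comparable to $\delta,\epsilon$ while rotating the angle from $\Omega(p)$ to $\Omega^*$. Then follow a radial null segment up to $U=U^*$, landing on $\gamma^{U^*,\Omega^*}$ at a parameter $s<V^*$.

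The first leg uses the causality condition $\frac{32M^3}{r}e^{-r/2M}\dot U\dot V\geq r^2|\dot\Omega|^2$. The right-hand side carries a factor $r^2$, which is harmless as long as $r$ stays bounded below. This holds because $p$ lies in the interior of $\krus$ and the curve stays below $q$, so $r\geq r(q)>0$. The required angular change is $\operatorname{dist}(\Omega(p),\Omega^*)$. It is small once $p$ is far along $D$: by the Cauchy estimate it is at most $\int_0^{r(p)}(\cdots)\to0$, while $\delta\epsilon$ stays bounded below for fixed $p,q$. For an arbitrary $p\in D$ I would first replace $p$ by a later point of $D$ above it, which is harmless since the target set is a lower set.

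\textbf{Inclusion $\widehat{J^-}(\gamma^{U^*,\Omega^*})\subseteq\widehat D$.} Since $\widehat D$ is lower and directed-sup-closed, it suffices to show each point $\gamma^{U^*,\Omega^*}(s)$ with $s<V^*$ lies in $\widehat D$. Every such point is the supremum of an increasing sequence $q_k\ll\gamma^{U^*,\Omega^*}(s)$ with $U(q_k)<U^*$, so it suffices to show that every $q$ with $U(q)<U^*$, $V(q)<V^*$ lies below some point of $D$. Take $p_n\in D$ with $U(p_n)\to U^*$, $V(p_n)\to V^*$ and $\Omega(p_n)\to\Omega^*$. For $n$ large there are positive gaps $U(p_n)-U(q)$ and $V(p_n)-V(q)$, bounded below, while $r$ along the connecting region stays bounded below by a positive constant depending on $q$. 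The same rotate-then-radial construction then yields $q\leq p_n$, so $q\in\widehat D$.

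\textbf{Main obstacle.} The delicate step is the quantitative connection near the singularity. One must check that the angular gap $\operatorname{dist}(\Omega(q),\Omega(p_n))$ can be absorbed by the available coordinate increments. The increments are bounded below while the required angle may be up to $\pi$, so the rotation must be done early, where $r$ is still comparable to $r(q)$ and the $r^2$ factor is controlled. Alternatively, I would invoke global hyperbolicity: $J^-$ of a point is closed, so it suffices to show $q\in I^-$ of points near $\gamma^{U^*,\Omega^*}(s')$ for $s<s'<V^*$. This follows from openness of $I^+(q)$, which contains $\gamma^{U^*,\Omega^*}(s')$, together with convergence of $p_n$ toward the null generator. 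That argument also needs care, because the points $p_n$ approach $r=0$ rather than $\gamma(s')$. The reduction is to pick $p_n\geq$ a point near $\gamma(s')$, using directedness and the angle estimate.
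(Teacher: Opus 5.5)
Your angular-limit step is the paper's argument and is fine. The gap is in the inclusion $\widehat D\subseteq\widehat{J^-}(\gamma^{U^*,\Omega^*})$. You try to prove $p\le\gamma^{U^*,\Omega^*}(s)$ for some $s<V^*$, i.e.\ $D\subseteq J^-(\gamma^{U^*,\Omega^*})$, and this is false in general. Take $D:=\{(U,1/U^*,\Omega_0):0<U<U^*\}$, a piece of the radial null geodesic $\gamma^{V,\Omega_0}$ with $V=1/U^*$. It is a chain with $\sup U=U^*$, $\sup V=1/U^*=V^*$ and $\Omega^*=\Omega_0$, so the lemma applies. But every $p\in D$ has $V(p)=V^*$, while $V$ is nondecreasing along causal curves and $V(\gamma^{U^*,\Omega^*}(s))=s<V^*$. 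Hence no point of $D$ lies in $J^-(\gamma^{U^*,\Omega^*})$. For the same reason your step ``pick $q\in D$ with $p\ll q$'' is impossible: $V$ strictly increases along timelike curves and is constant on $D$. Replacing $p$ by a later point of $D$ does not help either. This is exactly why the statement involves $\widehat{J^-}$ and not $J^-$. The missing idea is the one the paper uses. Approximate $p\in D$ from below by $p_n^-:=(U(p)-\tfrac1n,V(p)-\tfrac1n,\Omega(p))\ll p$, show $p_n^-\in J^-(\gamma^{U^*,\Omega^*})$, and conclude from $p=\sup_n p_n^-$ (Lemma~\ref{lemma:directed_set_tip_interiorM}) together with the directed-sup-closedness of $\widehat{J^-}(\gamma^{U^*,\Omega^*})$.

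Even for the points $p_n^-$, your connection argument does not close as written. You shrink $\operatorname{dist}_{\mathbb{S}^2}(\Omega(p),\Omega^*)$ by moving $p$ along $D$, but then $q$, and with it $\delta,\epsilon$, must move too, so nothing stays ``bounded below for fixed $p,q$''. Moreover, the only a priori bound on the residual angle, $\int_0^{r(p)}\d r/\sqrt{r(2M-r)}$, is the same quantity that bounds the total rotation of any future causal curve from $p$ into $r=0$. The comparison is therefore borderline and cannot be waved through. A clean way to get $p_n^-\in J^-(\gamma^{U^*,\Omega^*})$ avoids estimates entirely. For $p'\ge p$ in $D$, compose the boost $(U,V)\mapsto(\lambda U,V/\lambda)$, with $\lambda=U^*/U(p')$, with a rotation by angle $\operatorname{dist}_{\mathbb{S}^2}(\Omega(p'),\Omega^*)$ sending $\Omega(p')$ to $\Omega^*$. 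This gives a time-orientation-preserving isometry $\Phi$ with $\Phi(p')=\gamma^{U^*,\Omega^*}(U(p')V(p')/U^*)$, and $\Phi\to\mathrm{id}$ along $D$. Since $I^+(p_n^-)$ is an open neighbourhood of $p$, for $p'$ late you get $p_n^-\ll\Phi(p)\le\Phi(p')$. In the opposite inclusion, your reduction to ``every $q$ with $U(q)<U^*$, $V(q)<V^*$ lies below a point of $D$'' is false as stated. Take $q$ near $r=0$ with $\Omega(q)$ antipodal to $\Omega^*$: any causal curve from $q$ rotates by at most $\int_0^{r(q)}\d r/\sqrt{r(2M-r)}$, and $r$ does not stay bounded below along the connection since $r(p_n)\to0$. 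Your alternative is the right one and is short. For $q\ll\gamma^{U^*,\Omega^*}(s')$, openness of $I^+(q)$ gives $w_n:=(U^*-\eta,s',\Omega(p_n))\in I^+(q)$ for small $\eta$ and large $n$. Then $w_n\le p_n$ along a radial segment once $U(p_n)\ge U^*-\eta$ and $V(p_n)\ge s'$.
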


\begin{proof}
Without loss of generality, we may assume that $p_1,p_2\in D$ are such that $p_1 \leq p_2$ and \(r(p_{1}), r(p_{2}) \in (0,2M)\).
Let $\gamma:[0,1]\to\krus$ be a future directed causal curve joining \(p_{1}\) to \(p_{2}\) such that \(r_s:=r(\gamma_s)\in(0,2M)\). Put, for brevity, $U_s:=U(\gamma_s)$, and define $V_s$ and $\Omega_s$ similarly. Then 
\begin{equation}\label{causal curve}
    \frac{32M^3}{r_s}e^{-\frac {r_s}{2M}}\dot U_s\dot V_s-r^2_s|\dot\Omega_s|^{2}\ge0.
\end{equation}
Along \(\gamma_s\), we have $ 0<r_s<2M, U_s>0,  V_s>0, \dot U_s\ge 0, \dot V_s\ge 0.$
In particular, for any $s\in [0,1]$, $\dot r_s<0,$ otherwise, $\gamma_s$ would fail to be causal.
Combining \eqref{relation uv}, \eqref{eq:computation_r_dot} and \eqref{causal curve}, we obtain
\begin{equation}\label{less than}
    |\dot\Omega_s|^{2}
\le
\frac{\dot r^{2}_s}{r_s(2M-r_s)}.
\end{equation}
In particular, we have that
\begin{equation}\label{estimate of omega}
     \operatorname{dist}_{\mathbb{S}^2}\bigl(\Omega(p_1),\Omega(p_2)\bigr)
    \leq
    \int_{r(p_2)}^{r(p_1)}
    \frac{\mathrm{d}r}{\sqrt{r(2M-r)}},
\end{equation}
and so $\Omega(p)$ is a Cauchy net in
$\mathbb{S}^2$, hence the first part follows.

We are left to show the final identity. We first prove that $\widehat{J^-}(\gamma^{U^*,\Omega^*})\subseteq\widehat D$. Since $\widehat D$ is directed-sup-closed and every point of $\gamma^{U^*,\Omega^*}$ is the supremum of an increasing sequence of points lying strictly in its chronological past, it suffices to show that every $q$ with $U(q)<U^*$ and $q\in J^-(\gamma^{U^*,\Omega^*})$ belongs to $\widehat D$. Choose an increasing sequence $(p_i)_{i\in\N}\subseteq D$ with $U(p_i)\to U^*$, $V(p_i)\to1/U^*$ and $\Omega(p_i)\to\Omega^*$. Then $U(q)<U(p_i)$ for $i$ large, and the angular gap between $q$ and $p_i$ can be covered by a rotation at constant $r$ followed by a radial null segment, so that $q\leq p_i$ and hence $q\in\widehat D$. Conversely, fix $p\in D$. For $n$ sufficiently large, set $p_n^-:=(U(p)-1/n,\,V(p)-1/n,\,\Omega(p)).$
Then $p_n^-\in I^-(p)$. By \eqref{eq:increasing_geodesics},
\[
p_n^-\in
\widehat{J}^{-}\bigl(\gamma^{U^*,\Omega^*}\bigr)
\qquad\text{for every sufficiently large }n.
\]
The sequence $(p_n^-)$ is increasing and converges to $p$; by
Lemma \ref{lemma:directed_set_tip_interiorM}, $p=\sup_n p_n^-$. Since
$\widehat{J}^{-}(\gamma^{U^*,\Omega^*})$ is directed-sup-closed, we obtain $p\in\widehat{J}^{-}\bigl(\gamma^{U^*,\Omega^*}\bigr).$
Thus
$\widehat D\subseteq\widehat{J}^{-}(\gamma^{U^*,\Omega^*})$, and the
reverse inclusion proved above yields equality.
\end{proof}

In the next corollary, we provide the inclusion relation among directed sets reaching the black hole singularity.

\begin{corollary}\label{corol:singularity-order-structure}
Let $D_i\subseteq \krus,i=1,2$ be directed sets such that $\sup_{p\in D_i}U(p)=U^*_i\in(0,+\infty)$ and $
\sup_{p\in D_i}V(p)=V^*_i=1/{U^*_i}$. By Lemma \ref{lemma:singularity-angle}, there exists $\Omega_i^*\in \mathbb{S}^2$ such that $\lim_{p\in D_i} \Omega(p)=\Omega_i^*$.  Then
\[
\widehat D_1\subseteq \widehat D_2
\qquad\Longleftrightarrow\qquad
\Omega^*_1=\Omega^*_2\quad \mathrm{and}\quad U^*_1= U^*_2 \qquad\Longleftrightarrow\qquad \widehat D_1=\widehat D_2.
\]
    
\end{corollary}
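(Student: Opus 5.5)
The plan is to prove the chain of equivalences through a cycle: $(\Omega^*_1,U^*_1)=(\Omega^*_2,U^*_2)\Rightarrow \widehat D_1=\widehat D_2 \Rightarrow \widehat D_1\subseteq\widehat D_2 \Rightarrow (\Omega^*_1,U^*_1)=(\Omega^*_2,U^*_2)$. The first implication is immediate from Lemma \ref{lemma:singularity-angle}, since $\widehat D_i=\widehat{J^-}(\gamma^{U_i^*,\Omega_i^*})$ depends only on $(U_i^*,\Omega_i^*)$. The second is trivial. So the substance is the last implication.

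Assume $\widehat D_1\subseteq\widehat D_2$. First I will show $U_1^*=U_2^*$. The coordinate function $U$ is monotone, and the set $\{U\le U_2^*\}$ is a lower, directed-sup-closed set: lower because $U$ increases along causal curves, and sup-closed because suprema in $\krus$ are limits, as in the proof of Lemma \ref{lemma:directed_set_tip_interiorM}, and $U$ is continuous. Since it contains $D_2$, it contains $\widehat D_2$, and hence $D_1$. This gives $U_1^*\le U_2^*$, and the same argument with $V$ gives $V_1^*\le V_2^*$. Since $V_i^*=1/U_i^*$, the inequality $U_1^*\le U_2^*$ forces $V_1^*\ge V_2^*$, and therefore $U_1^*=U_2^*=:U^*$.

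It remains to show $\Omega_1^*=\Omega_2^*$. Pick an increasing sequence $p_n\in D_1$ with $U(p_n)\to U^*$ and $V(p_n)\to 1/U^*$, so that $r(p_n)\to 0$ by \eqref{relation uv}, and with $\Omega(p_n)\to\Omega_1^*$. Each $p_n$ lies in $\widehat D_2=\widehat{J^-}(\gamma^{U^*,\Omega_2^*})$. I want to conclude that $p_n\le q_n$ for some point $q_n$ with $r(q_n)\to0$ and $\Omega(q_n)\to\Omega_2^*$. The candidate is a point of $D_2$ itself. Since $p_n\in\widehat D_2$ and, in the region $r<2M$, the set $\widehat D_2$ agrees with the closure of $J^-(D_2)$, we can take $q_n\in D_2$ with $p_n\le q_n$ up to an arbitrarily small perturbation; the perturbation is harmless because $\Omega$ and $r$ are continuous. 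The estimate \eqref{estimate of omega} then gives
\[
\operatorname{dist}_{\mathbb S^2}\bigl(\Omega(p_n),\Omega(q_n)\bigr)\le\int_{r(q_n)}^{r(p_n)}\frac{\d r}{\sqrt{r(2M-r)}}\to0 ,
\]
by integrability of the integrand at $0$ and because both endpoints tend to $0$. Passing to the limit yields $\Omega_1^*=\Omega_2^*$.

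The main obstacle is justifying that points of $\widehat D_2$ near the singularity lie causally below points of $D_2$ up to small error. This is where the $\widehat{\ }$-closure could add more than $J^-$, and it is exactly why $\widehat{J^-}(\gamma)\neq J^-(\gamma)$. My first attempt will be to show that the closure of $J^-(D_2)\cap\{r<2M\}$ is lower and directed-sup-closed, using closedness of causal relations in the globally hyperbolic $\krus$. That set therefore contains $\widehat D_2$, which gives $p_n$ as a limit of points $p_n^k\le q_n^k\in D_2$. Applying the angle estimate to $p_n^k\le q_n^k$ and letting $k\to\infty$ then suffices.
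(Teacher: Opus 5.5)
This is essentially the paper's proof: the {\sf Mcp} of $U,V$ together with $V_i^*=1/U_i^*$ gives $U_1^*=U_2^*$, and the estimate \eqref{estimate of omega}, applied to pairs $p_n\le q_n$ tending to the singularity, gives $\Omega_1^*=\Omega_2^*$. The paper only asserts that such $q_n$ exist, so your justification through $\widehat D_2\subseteq\overline{J^-(D_2)}$ is a useful addition.

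There is one slip to fix. Drop the intersection with $\{r<2M\}$: the resulting set is not lower, because $J^-$ of black-hole points reaches the exterior regions. The set $\overline{J^-(D_2)}$ itself works. It is lower by push-up and approximability, since any $y'\ll y\le x$ satisfies $y'\ll x$. It is closed, and in $\krus$ suprema of directed sets are limits, so it is directed-sup-closed. Finally, take $q_n$ far enough along the net $D_2$ that $\Omega(q_n)\to\Omega_2^*$.
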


\begin{proof}
First notice that the last equivalence is trivial by Lemma \ref{lemma:singularity-angle}. We are left to prove that if $\widehat D_1\subseteq \widehat D_2$ then $\Omega^*_1=\Omega^*_2$ and $U^*_1= U^*_2$.

Since $U,V$ satisfy the {\sf Mcp}, we have $ U_1^*\leq U_2^*$ and $ V_1^*\leq V_2^*$. But $V^*_i=1/{U^*_i}$ and so $U_1^*=U_2^*$ and $V_1^*=V_2^*$. It remains to prove that $\Omega_1^*=\Omega_2^*$. Put
$U^*:=U_1^*=U_2^*,
    V^*:=1/U^*.$
Using
\[
    \widehat D_1
    =
    \widehat{J^-}\bigl(\gamma^{U^*,\Omega_1^*}\bigr)
    \subseteq
    \widehat{J^-}\bigl(\gamma^{U^*,\Omega_2^*}\bigr)
    =
    \widehat D_2,
\]
we can find $p_n\in\widehat D_1$ and $q_n\in\widehat D_2$ such that $p_n\leq q_n$ and
\[
    \big(U(p_n),V(p_n),\Omega(p_n)\big)\to (U^*,V^*,\Omega_1^*),\qquad
    \big(U(q_n),V(q_n),\Omega(q_n)\big)\to (U^*,V^*,\Omega_2^*),
\]
which implies that $r(p_n)$ and $r(q_n)$ tend to zero. Applying the estimate \eqref{estimate of omega} to $p_n\leq q_n$
and letting $n\to+\infty$, we obtain $\Omega_1^*=\Omega_2^*$, and the proof is complete.
\end{proof}

\subsection{The structure of the completion} \label{subsection: The structure of the completion}

In the following lemma we show that the partially ordered set $(\mathrm{Y},\subseteq)$ is directed complete.

\begin{lemma}\label{lemma:Y_directed_complete}
    The partially ordered set $(\mathrm{Y},\subseteq)$, defined in \eqref{eq:def:Y}, is directed complete. More precisely, for every directed family
$\{\widehat D_i:i\in I\}\subseteq Y$, the set $F:=\widehat{\bigcup_{i\in I}\widehat D_i}$
belongs to $Y$ and is the supremum of the family in $(Y,\subseteq)$.
\end{lemma}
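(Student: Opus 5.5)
The plan is to reduce everything to the parametrization of $\mathrm Y$ provided by Proposition \ref{prop:structure_Dhat_general}. Every element of $\mathrm Y$ is then determined by the pair $(U^*,V^*)\in(\R\cup\{+\infty\})^2$ together with an angle $\Omega^*$ (irrelevant at $i^+$, $i^{+'}$).

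First I will observe that, given a directed family $\{\widehat D_i\}_{i\in I}$, the quantities
\[
U_i^*:=\sup_{\widehat D_i}U,\qquad V_i^*:=\sup_{\widehat D_i}V
\]
are monotone in $i$, because $\widehat D_i\subseteq\widehat D_j$ implies $U_i^*\le U_j^*$ and $V_i^*\le V_j^*$. Here I use that $U$ and $V$ have the {\sf Mcp}, so that $\sup_{\widehat D}U=\sup_D U$. I then set $U^*:=\sup_i U_i^*$ and $V^*:=\sup_i V_i^*$.

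The key construction is a single directed set $E\subseteq\krus$ with $\widehat E=F$. I take $E:=\bigcup_i D_i$ after first replacing each $D_i$ by the lower set $J^-(D_i)$, which is still directed and has the same hat. To show $E$ is directed, given $p\in D_i$ and $q\in D_j$, pick $k$ with $\widehat D_i,\widehat D_j\subseteq\widehat D_k$. It then suffices to find a point of $D_k$ above both $p$ and $q$. This step is the main obstacle, since $p,q\in\widehat D_k$ does not give such a point in general: by the remark before this subsection, $\widehat D_k$ need not be directed.

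To handle this obstacle, I will perturb rather than use $p$ and $q$ themselves. Choose $p'\ll p$ and $q'\ll q$, shifting $U$ and $V$ down by $1/n$. Using the explicit descriptions in Lemmas \ref{lemma:directed_set_tip_interiorM}, \ref{lemma:timelike_infty}, \ref{lemma:future_null_angle} and \ref{lemma:singularity-angle}, every point of $\widehat D_k$ with $U<U_k^*$ (respectively $V<V_k^*$, in the $\mathcal T$-symmetric cases) lies below some point of $D_k$. This is the "rotation at constant $r$ plus radial null segment" argument already used there. Directedness of $D_k$ then gives a common upper bound of $p'$ and $q'$ in $D_k$.

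Consequently the modified set
\[
E':=\bigcup_i\{p':\ p\in D_i\}
\]
is directed, up to a lower closure. Moreover $\widehat{E'}\supseteq\bigcup_i\widehat D_i$, because each $p\in\widehat D_i$ is the supremum of an increasing sequence of such $p'$ (Lemma \ref{lemma:directed_set_tip_interiorM}) and $\widehat{E'}$ is directed-sup-closed. Conversely $E'\subseteq\bigcup_i\widehat D_i$, so $\widehat{E'}=F$ and hence $F\in\mathrm Y$.

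A potential gap is the case where $p$ sits exactly at $U=U_k^*$ on the generating null ray. Taking $k$ strictly larger when possible, or otherwise using the perturbation $p'$ (which strictly decreases $U$), should dispose of it. The borderline situation, in which all $U_i^*$ and $\Omega_i^*$ are constant and the family is eventually constant, is trivial because then $F=\widehat D_k$.

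Finally, $F$ is an upper bound of the family. It is also the least one in $\mathrm Y$, since any $\widehat D\in\mathrm Y$ containing all $\widehat D_i$ is lower and directed-sup-closed, and hence contains $F$ by minimality. This proves \eqref{eq:first_step_suffices}.
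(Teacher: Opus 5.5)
Your strategy of producing one directed set $E\subseteq\krus$ with $\widehat E=F$ differs from the paper's. The paper splits into four cases: all $\widehat D_i$ of point type, or some $\widehat D_{\overline i}$ of singularity, $\mathcal I^+$ or $i^+$ type. In each case it builds the generating directed set by hand, freezing the angle via Corollaries \ref{cor:future_null_order} and \ref{corol:singularity-order-structure}. Your route can be made to work, but the step carrying all the weight is false as stated.

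It is not true that every point of $\widehat D_k$ with $U<U_k^*$ lies below a point of $D_k$. Take $U_0<0<V_0$, $p_k=(U_0,V_0,\Omega_0)$ and $D_k=\{(U_0,V_0-1/n,\Omega_0): n\geq n_0\}$, so that $\widehat D_k=J^-(p_k)$ by Lemma \ref{lemma:directed_set_tip_interiorM}. Then $x=(U_0-1,V_0,\Omega_0)\leq p_k$ has $U(x)<U_k^*$, but $V(x)>V(d)$ for every $d\in D_k$, so $x$ lies below no point of $D_k$. Swapping the roles of $U$ and $V$ defeats the $V$-version. Points on non-radial null generators of $J^-(p_k)\setminus I^-(p_k)$ defeat the version requiring both strict inequalities: a causal curve from such a point through some $d\in D_k$ to $p_k$ would have to be an unbroken null geodesic, hence radial.

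The rotation argument you invoke appears in the paper only in Lemmas \ref{lemma:future_null_angle} and \ref{lemma:singularity-angle}, and only for points of $J^-(\gamma^{U^*,\Omega^*})$. In the singularity case $\widehat D_k=\widehat{J^-}(\gamma^{U^*,\Omega^*})$ is strictly larger than that. What your perturbed points really satisfy is $p'\in I^-(\widehat D_k)$, and the fact you need is $I^-(\widehat D_k)\subseteq I^-(D_k)$. This is not a coordinate statement but a consequence of \eqref{eq:SC}, which holds in $\krus$ by Propositions \ref{prop: forward implies SC} and \ref{prop: case of glob hyp spacetimes}. The set $\{y: I^-(y)\subseteq I^-(D_k)\}$ is a lower set containing $D_k$. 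It is directed-sup-closed because $I^-(\sup E)=I^-(E)$, so it contains $\widehat D_k$.

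Second, the passage from \emph{a common upper bound of $p',q'$ in $D_k$} to \emph{$E'$ is directed up to a lower closure} does not go through. The bound lies in $D_k$, and in general $D_k\not\subseteq J^-(E')$. For instance, let $D_m=J^-(p_m)$ with $p_m$ increasing along a radial null geodesic. Then the bound $p_k$ lies below no shifted point $s'$, since $p_k\leq s'\ll s\leq p_m$ would give $p_k\ll p_m$.

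You need a chronological bound instead. By the fact above, $p'\ll d_1$ and $q'\ll d_2$ with $d_1,d_2\in D_k$, hence $p',q'\ll d$ for some $d\in D_k$. Any point of $I^-(d)\cap I^+(p')\cap I^+(q')$, which is nonempty by approximability, is then a common upper bound lying in $I^-(D_k)$.

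This suggests taking directly $E:=\bigcup_i I^-(D_i)$.
\begin{itemize}
\item It is directed by the argument just given, noting that $I^-(D_i)\subseteq I^-(\widehat D_k)\subseteq I^-(D_k)$ whenever $\widehat D_i\subseteq \widehat D_k$.
\item It is contained in $\bigcup_i\widehat D_i$.
\item Each $d\in D_i$ is the supremum of an increasing sequence in $I^-(d)\subseteq E$.
\end{itemize}
Hence $\widehat E=F\in\mathrm Y$, and your minimality argument concludes. So repaired, your proof is case-free and uses only approximability, push-up and \eqref{eq:SC}, so it works in any globally hyperbolic spacetime. The paper's case analysis is tied to $\krus$, but in exchange it identifies the supremum explicitly as $\widehat{J^-}(\gamma^{U^*_\infty,V^*_\infty,\Omega^*_\infty})$.
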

\begin{proof}
    Let $\{ \widehat{D_i} : i\in I \}$ be a directed family in $\mathrm Y$.
We take the family itself as index set, that is we endow $I$ with the preorder
    \[
        i\leq j\quad\Longleftrightarrow\quad \widehat{D_i}\subseteq\widehat{D_j},
    \]
    which is directed by assumption. We shall use twice the following elementary remarks. First, if $J\subseteq I$ is cofinal, then $\{\widehat{D_i}\}_{i\in J}$ and $\{\widehat{D_i}\}_{i\in I}$ have the same upper bounds and the same union, so that we may always pass to a cofinal tail. Second, in order to prove \eqref{eq:first_step_suffices} it suffices, in each case, to exhibit an element $F\in\mathrm Y$ such that
    \begin{equation}\label{eq:sup_criterion}
        \widehat{D_i}\subseteq F\quad\forall i\in I,
        \,\,\text{and}\,\,
        F\subseteq G\ \text{ for every lower and directed-sup-closed }G\supseteq\textstyle\bigcup_{i\in I}\widehat{D_i};
    \end{equation}
    such an $F$ is precisely the smallest lower and directed-sup-closed set
containing $\bigcup_{i\in I}\widehat D_i$, and therefore it is the
supremum of the family in $Y$.
    
    By Proposition \ref{prop:structure_Dhat_general}, for every $i\in I$, there exists $(U^*_i,V^*_i,\Omega^*_i)$ such that $\widehat{D_i}=\widehat{J^-}(\gamma^{U^*_i,V^*_i,\Omega^*_i})$. Notice that $U^*_i=\sup_{p\in\widehat{D_i}}U(p)$: indeed $D_i\subseteq\widehat{D_i}$, while the set $\{U\leq U^*_i\}$ is a lower set, is directed-sup-closed because $U$ has the {\sf Mcp}, and contains $D_i$, so that it contains $\widehat{D_i}$. The same holds for $V$. In particular $i\leq j$ implies $U^*_i\leq U^*_j$ and $V^*_i\leq V^*_j$, so that the monotone limits
    \[
        U^*_\infty:=\lim_{i\in I}U^*_i\in\R\cup\{+\infty\},
        \qquad
        V^*_\infty:=\lim_{i\in I}V^*_i\in\R\cup\{+\infty\}
    \]
    exist. Observe also that $V^*_i=+\infty$ forces $U^*_i\leq0$: if some $p\in D_i$ had $U(p)=u>0$, then every $q\in D_i$ with $q\geq p$ would satisfy $U(q)\geq u$ and hence $V(q)<1/u$, whence $V^*_i\leq1/u$. We split the treatment in the following cases.\\
    \textbf{Case 1:} If $(U^*_i,V^*_i,\Omega^*_i)$ defines a point $p_i\in \mathcal M_{\mathrm{Krus}}$ for every $i\in I$, then Lemma \ref{lemma:directed_set_tip_interiorM} gives $\widehat{D_i}=J^-(p_i)$, and $D=\{p_i: i\in I\}$ is a directed set in $\mathcal M_{\mathrm{Krus}}$: indeed $i\leq j$ means $J^-(p_i)\subseteq J^-(p_j)$, whence $p_i\leq p_j$.
    We claim that $F:=\widehat D$ satisfies \eqref{eq:sup_criterion}. It belongs to $\mathrm Y$ by the very definition \eqref{eq:def:Y}. It contains every $\widehat{D_i}=J^-(p_i)$, because $p_i\in D\subseteq\widehat D$ and $\widehat D$ is a lower set. Finally, if $G$ is lower, directed-sup-closed and contains $\bigcup_i\widehat{D_i}\supseteq D$, then $\widehat D\subseteq G$ by minimality of the operation $A\mapsto\widehat A$. By monotonicity, there exists $U^*_\infty:=\lim_{i\in I} U^*_i\in\R\cup\{+\infty\}$ and $V^*_\infty:=\lim_{i\in I} V^*_i\in\R\cup\{+\infty\}$, and applying Proposition \ref{prop:structure_Dhat_general} to $D$ we obtain
    \[
    \sup_{i\in I}\widehat{D_i}=\widehat{D}=\widehat{J^-}\big(\gamma^{U^*_\infty,V^*_\infty,\Omega^*_\infty}\big)\in \mathrm Y,
    \]
    where $\Omega^*_\infty$ is the angular parameter provided by Proposition \ref{prop:structure_Dhat_general}, namely $\Omega^*_\infty=\lim_{i\in I}\Omega^*_i$ unless $U^*_\infty=+\infty,V^*_\infty=0$ or $U^*_\infty=0,V^*_\infty=+\infty$, in which cases the limit set does not depend on the angular parameter by Lemma \ref{lemma:timelike_infty} and any $\Omega_0\in \mathbb{S}^2$ may be chosen.\\
    \textbf{Case 2:} Assume that there exists $\overline i\in I$ such that $U^*_{\overline i}V^*_{\overline i}=1$. Then by Corollary \ref{corol:singularity-order-structure}, 
    $U_{\overline{i}}^*= U_{i}^*$, $V_{\overline{i}}^*= V_{i}^*$ and $\Omega_{\overline{i}}^*= \Omega_{i}^*$ holds for any $i \geq \overline{i}$, that is $\widehat{D_i}=\widehat{D_{\overline i}}$ for every $i\geq\overline i$. Since $\{i\in I: i\geq\overline i\}$ is cofinal, for an arbitrary $i\in I$ we may pick $j\geq i,\overline i$ and obtain $\widehat{D_i}\subseteq\widehat{D_j}=\widehat{D_{\overline i}}$; hence $\bigcup_{i\in I}\widehat{D_i}=\widehat{D_{\overline i}}$ and $F:=\widehat{D_{\overline i}}\in\mathrm Y$ trivially satisfies \eqref{eq:sup_criterion}. Therefore $\sup_{i\in I}\widehat{D_i}=\widehat{D_{\overline i}}$.\\
    \textbf{Case 3:} Assume that there exists $\overline i\in I$ such that $V^*_{\overline{i}}=+\infty$ and suppose that $U^*_i<0$ for every $i\in I$ (the case $U^*_{\overline{i}}=+\infty$ and $V^*_i<0$ for every $i\in I$ is analogous). Then, by Corollary \ref{cor:future_null_order}, $V^*_i=+\infty$ and $\Omega^*_i=\Omega^*_{\overline i}=:\Omega^*$ for any $i \geq \overline{i}$. Passing to the cofinal tail $\{i\geq\overline i\}$ and setting $U^*_\infty:=\lim_{i\in I}U^*_i\in(-\infty,0]$, consider
    \[
        D:=\bigl\{\gamma^{U^*_i,\Omega^*}(s) \;:\; i\geq\overline i,\ s\in(1/U^*_i,+\infty)\bigr\}.
    \]
    The set $D$ is directed: two of its points have the same angular coordinate and, by \eqref{eq:increasing_geodesics}, the point $\gamma^{U^*_j,\Omega^*}(s)$ with $j$ an upper bound of the two indices and $s$ larger than both parameters lies in their common causal future. Moreover $\sup_{p\in D}U(p)=U^*_\infty$, $\sup_{p\in D}V(p)=+\infty$ and $\Omega\equiv\Omega^*$ on $D$, so that Lemma \ref{lemma:future_null_angle} (if $U^*_\infty<0$) or Lemma \ref{lemma:timelike_infty} (if $U^*_\infty=0$) gives $\widehat D=\widehat{J^-}(\gamma^{U^*_\infty,+\infty,\Omega^*})$. We claim that $F:=\widehat D$ satisfies \eqref{eq:sup_criterion}. On the one hand $D\subseteq\bigcup_{i}\widehat{D_i}$, because $\gamma^{U^*_i,\Omega^*}$ is contained in $\widehat{D_i}$, so that $F\subseteq G$ for every lower and directed-sup-closed $G$ containing $\bigcup_i\widehat{D_i}$. On the other hand $U^*_i\leq U^*_\infty$ and the angular parameters agree, whence $\widehat{D_i}\subseteq F$ for every $i$. Therefore
    \[
    \sup_{i\in I}\widehat{D_i}=\widehat{J^-}\big(\gamma^{U^*_{\infty},+\infty,\Omega^*}\big)\in \mathrm{Y}.
    \]\\
    \textbf{Case 4:} Assume that there exists $\overline i\in I$ such that  $V^*_{\overline{i}}=+\infty$ and $U^*_{\overline i}=0$ (the case $U^*_{\overline{i}}=+\infty$ and $V^*_{\overline{i}}=0$ is analogous). Then, by Lemma \ref{lemma:timelike_infty}, $\widehat{D_{\overline i}}=\{U\leq0\}$. For $i\geq\overline i$ we have $\widehat{D_i}\supseteq\widehat{D_{\overline i}}$, hence $V^*_i=+\infty$, and therefore $U^*_i\leq0$ by the observation made at the beginning of the proof; since also $U^*_i\geq U^*_{\overline i}=0$, we conclude that $V^*_i=+\infty$ and $U^*_{i}=0$ for any $i \geq \overline{i}$, so that $\widehat{D_i}=\widehat{D_{\overline i}}$ by Lemma \ref{lemma:timelike_infty}. As in Case 2, the tail $\{i\geq\overline i\}$ being cofinal we obtain $\bigcup_{i\in I}\widehat{D_i}=\widehat{D_{\overline i}}$, and $F:=\widehat{D_{\overline i}}$ satisfies \eqref{eq:sup_criterion}. Hence, for any $\Omega_0\in \mathbb{S}^2$,
    $\widehat{J^-}(\gamma^{ 0, +\infty, \Omega_0})= \widehat{D}_{\overline{i}}= \sup_{i\in I}\widehat{D}_i$.

    Finally, the four cases are exhaustive: If every triple
$(U_i^*,V_i^*,\Omega_i^*)$ defines a point of
$\mathcal M_{\rm Krus}$, then we are in Case 1. Otherwise, by Proposition \ref{prop:structure_Dhat_general}, some index $\overline i$ satisfies $U^*_{\overline i}V^*_{\overline i}=1$, or $V^*_{\overline i}=+\infty$, or $U^*_{\overline i}=+\infty$, which are the situations covered by Cases 2, 3 and 4.
    \end{proof}

We conclude this section with the proof of Theorem \ref{main theorem}.

\begin{proof}[Proof of Theorem \ref{main theorem}]
    It suffices to show that  $(\mathrm Y,\subseteq)$
    is isomorphic to $(\overline{\mathcal{M}}_{\mathrm{Krus}}, \preceq)$.
    To do so, using the notation introduced in Proposition \ref{prop:structure_Dhat_general}, we define the map
    \[
    T\colon\,\,\, \overline{\mathcal{M}}_{\mathrm{Krus}} \to \mathrm Y
    \]
    by
    \begin{equation}
        T(p)=\begin{cases}
            \widehat{J^-}(\gamma^{U(p),V(p),\Omega(p)}),\qquad&\text{if }p\in\krus,\\
            \widehat{J^-}(\gamma^{0,+\infty,\Omega_0}),\qquad&\text{if }p=i^+,\\
            \widehat{J^-}(\gamma^{+\infty,0,\Omega_0}),\qquad&\text{if }p={i^+}',\\
            \widehat{J^-}(\gamma^{U^*,+\infty,\Omega^*}),\qquad&\text{if }p=[U^*,\Omega^*]\in{\mathcal I^+},\\
            \widehat{J^-}(\gamma^{+\infty,V^*,\Omega^*}),\qquad&\text{if }p=[V^*,\Omega^*]\in{\mathcal I^+}',\\
            \widehat{J^-} (\gamma^{U^*,1/U^*,\Omega^*} ),\qquad&\text{if }p\in S^+.
        \end{cases}
    \end{equation}

    We check separately that $T$ is well-posed, surjective, injective, order preserving and order reflecting.

    \emph{Well-posedness.} The only definitions involving a choice are those of $T(i^+)$ and $T(i^{+'})$. By Lemma \ref{lemma:timelike_infty} we have $\widehat{J^-}(\gamma^{0,+\infty,\Omega})=\{U\leq0\}$ for every $\Omega\in \mathbb{S}^2$, so that $T(i^+)$ does not depend on the arbitrary $\Omega_0$; the same argument applies to $T(i^{+'})$.

    \emph{Surjectivity.} By Proposition \ref{prop:structure_Dhat_general} every element of $\mathrm Y$ is of the form $\widehat{J^-}(\gamma^{U^*,V^*,\Omega^*})$ with $U^*,V^*\in\R\cup\{+\infty\}$, and the admissible pairs $(U^*,V^*)$ are exactly: those with $U^*V^*<1$ and $U^*,V^*$ finite, which are the coordinates of a point of $\krus$; those with $U^*>0$ and $V^*=1/U^*$, corresponding to $S^+$; those with $V^*=+\infty$ and $U^*<0$, corresponding to $\mathcal I^+$, and symmetrically for $\mathcal I^{+'}$; and those with $V^*=+\infty$, $U^*=0$, corresponding to $i^+$, and symmetrically for $i^{+'}$. Each of them is in the image of $T$.

    \emph{Order preservation and order reflection.} Being $\preceq$ defined by the explicit list \eqref{order1}--\eqref{order6}, the two implications
    \[
        p\preceq q\ \Longrightarrow\ T(p)\subseteq T(q),
        \qquad\qquad
        T(p)\subseteq T(q)\ \Longrightarrow\ p\preceq q
    \]
    are distinct statements, and neither of them follows from bijectivity together with the other: a monotone bijection between partial orders need not be an isomorphism, as one sees already by comparing a two-element set carrying no relation with the same set totally ordered. In our situation, however, most of the verification is automatic, because \eqref{order1} and the first equivalences in \eqref{order4}, \eqref{order5} and \eqref{order6} are stated as equivalences, and so are the descriptions of the inclusions between the sets $\widehat{J^-}(\gamma^{U^*,V^*,\Omega^*})$ given by Lemma \ref{lemma:directed_set_tip_interiorM} within $\krus$, by Corollary \ref{cor:future_null_order} on $\mathcal I^+$ and $\mathcal I^{+'}$, by Corollary \ref{corol:singularity-order-structure} on $S^+$ and by Lemma \ref{lemma:timelike_infty} at $i^+$ and $i^{+'}$. Both implications therefore hold whenever $p$ and $q$ lie in the same piece of $\overline{\mathcal M}_{\mathrm{Krus}}$, and likewise when $p\in\krus$, by the second identities in \eqref{order2}--\eqref{order6}.

    What is left, and is exactly the content of the assertion that no comparison other than those listed occurs, is the case in which $p$ and $q$ belong to two \emph{different} boundary pieces: there \eqref{order1}--\eqref{order6} declare $p$ and $q$ to be incomparable, and order reflection amounts to checking that the corresponding inclusion $T(p)\subseteq T(q)$ indeed fails.

    \emph{Injectivity.} If $T(p)=T(q)$ then, by order reflection, $p\preceq q$ and $q\preceq p$, whence $p=q$.

Two representative cases are the following. Let $p=[U^*,\Omega^*]\in\mathcal I^+$ and $q=[V^*,\Omega_1^*]\in\mathcal I^{+'}$: the set $T(p)$ contains points with arbitrarily large $V$ and $T(q)$ does not, while $T(q)$ contains points with arbitrarily large $U$ and $T(p)$ does not, so that neither inclusion holds and $p,q$ are incomparable, as declared. Let now $p\in S^+$ and $q=i^+$: every point of $T(q)=\{U\leq0\}$ with $V$ large fails to lie in $T(p)$, on which $V<1/U^*$, while $T(p)$ contains points with $U>0$; again neither inclusion holds. The remaining mixed cases are analogous.
\end{proof}

\section*{Data availability statement}

There is no data associated with this manuscript.

\section*{Conflict of interest statement}

The authors confirm that no conflict of interest exists for this work.

\section*{Acknowledgments}
The work of NG, MP, ZX and MZ was supported by the Italian Ministry of University and Research (MUR) under the \say{Fondo Italiano per la Scienza} (FIS 3) program, PI: prof.\ Nicola Gigli, Project Title: Modern challEnges in Geometric Analysis --- MEGA, CUP: G53C25000920001.

This research was funded in part by the Austrian Science Fund (FWF) [Grants DOI \href{https://doi.org/10.55776/EFP6}{10.55776/EFP6} and \href{https://doi.org/10.55776/J4913}{10.55776/J4913}]. For open access purposes, the authors have applied a CC BY public copyright license to any author accepted manuscript version arising from this submission.

The authors would like to thank Maxime Van de Moortel for encouraging discussions on the topic of this manuscript.

\addcontentsline{toc}{section}{References}
\printbibliography


\end{document}